\documentclass[dvips]{amsart}

\usepackage[foot]{amsaddr}
\title[Pivotal structures of the Drinfeld center]
{Pivotal structures of the Drinfeld center \\
  of a finite tensor category}
\author[K.~Shimizu]{Kenichi Shimizu}
\email{kshimizu@shibaura-it.ac.jp}
\address{Department of Mathematical Sciences \\
  Shibaura Institute of Technology \\
  307 Fukasaku, Minuma-ku, Saitama-shi, Saitama 337-8570, Japan.}
\thanks{The author is supported by JSPS KAKENHI Grant Number 16K17568}
\date{}

\usepackage{amsmath,amssymb,amscd}
\usepackage{bbm}
\usepackage{stmaryrd}
\usepackage{url}
\usepackage{mathrsfs}
\usepackage{upgreek}
\usepackage{eucal}

\usepackage{amsthm}
\numberwithin{equation}{section}

\newtheorem{counter}{}[section]

\theoremstyle{definition}

\newtheorem*{notation*}         {Notation}

\theoremstyle{plain}
\newtheorem{lemma}              [counter]{Lemma}

\newtheorem{theorem}            [counter]{Theorem}

\newtheorem*{theorem*}          {Theorem}
\theoremstyle{remark}
\newtheorem{remark}             [counter]{Remark}

\newtheorem{claim}              [counter]{Claim}

\DeclareMathOperator{\Hom}{Hom}

\DeclareMathOperator{\Nat}{Nat}
\DeclareMathOperator{\Aut}{Aut}
\DeclareMathOperator{\cAut}{\underline{Aut}}
\DeclareMathOperator{\Alg}{Alg}
\newcommand{\Ens}{\mathord{\mathbf{Set}}}
\newcommand{\unitobj}{\mathord{\mathbbm{1}}}
\DeclareMathOperator{\id}{id}
\DeclareMathOperator{\eval}{ev}
\DeclareMathOperator{\coev}{coev}
\DeclareMathOperator{\op}{op}
\DeclareMathOperator{\rev}{rev}
\DeclareMathOperator{\Inv}{Inv}
\newcommand{\canact}{\mathord{\mathsf{a}}}
\newcommand{\intch}{\mathsf{ch}}

\usepackage[all,knot]{xy}
\newcommand{\mycap}[1]{\POS{:
    (0,0); (0,#1) **@{} ?(.5)="_P1",
    (1,0); (1,#1) **@{} ?(.5)="_P2",
    (0,0); (1,0) **\crv{"_P1" & (0.5,#1) & "_P2"}}}
\newcommand{\pushrect}[2]{\POS{ c="_P1",
    "_P1"+(#1,-#2) @+, "_P1"+( 0,-#2) @+,
    "_P1"+(#1,  0) @+, "_P1"+( 0,  0) @+ }}
\newcommand{\makecoord}[2]{\POS{#1; p+(0, 1) **\dir{} ?!{#2; p+(1, 0)}}}
\newcommand{\mybendWithOpt}[2]{\POS{
    **@{} ?<="_P1" ?>="_P2",
    {"_P1"; p-(0,1) **@{} ?!{"_P2"; p-(1,0)}}="_P3",
    {"_P2"; p-(0,1) **@{} ?!{"_P1"; p-(1,0)}}="_P4",
    "_P1"; "_P3" **@{} ?(#1)="_P5",
    "_P2"; "_P4" **@{} ?(#1)="_P6",
    "_P1"; "_P2" **\crv{#2 "_P5" & "_P6"}}}
\newcommand{\mybend}[1]{\POS{\mybendWithOpt{#1}{}}}
\begin{document}

\maketitle

\begin{abstract}
  We classify the pivotal structures of the Drinfeld center $\mathcal{Z}(\mathcal{C})$ of a finite tensor category $\mathcal{C}$. As a consequence, every pivotal structure of $\mathcal{Z}(\mathcal{C})$ can be obtained from a pair $(\beta, j)$ consisting of an invertible object $\beta$ of $\mathcal{C}$ and an isomorphism $j: \beta \otimes (-) \otimes \beta^* \to (-)^{**}$ of monoidal functors.
\end{abstract}

\section{Introduction}

Throughout this paper, we work over an algebraically closed field $k$ of arbitrary characteristic. By a {\em tensor category} over $k$, we mean a $k$-linear abelian rigid monoidal category satisfying natural conditions \cite{MR3242743}. The rigidity means that every object $X$ in a tensor category has a well-behaved dual object $X^*$. Although the dual object generalizes the contragradient representation in the group representation theory, the `double dual' object $X^{**}$ is no longer isomorphic to $X$ in general. The assignment $X \mapsto X^{**}$ extends to a tensor autoequivalence on $\mathcal{C}$. A {\em pivotal structure} of a tensor category is an isomorphism $\id_{\mathcal{C}} \to (-)^{**}$ of tensor functors. Such an isomorphism does not exist in general, however, we often require a pivotal structure in some applications of tensor categories to, for example, representation theory, low-dimensional topology, and conformal field theory. Thus it is interesting and important to know when a tensor category admits a pivotal structure.

A {\em finite tensor category} \cite{MR2119143} is a tensor category satisfying a certain finiteness condition. In this paper, we classify the pivotal structures of the Drinfeld center of a finite tensor category. To state our main result, we introduce some notations: Let $\mathcal{C}$ be a finite tensor category, and let $\mathcal{Z}(\mathcal{C})$ denote the Drinfeld center of $\mathcal{C}$. The definition of $\mathcal{Z}(\mathcal{C})$ will be recalled in Subsection~\ref{subsec:drinfeld-center}, but here we note that an object of $\mathcal{Z}(\mathcal{C})$ is a pair $\mathbf{V} = (V, \sigma)$ consisting of an object $V \in \mathcal{C}$ and a natural isomorphism $\sigma(X): V \otimes X \to X \otimes V$ ($X \in \mathcal{C}$).

We now denote by $\mathscr{J}$ the class of all pairs $(\beta, j)$ consisting of an invertible object $\beta \in \mathcal{C}$ and a monoidal natural transformation $j: I^{\beta} \to (-)^{**}$, where
\begin{equation*}
  I^{\beta}(X) = \beta \otimes X \otimes \beta^* \quad (X \in \mathcal{C})
\end{equation*}
Given elements $\boldsymbol{\beta} = (\beta, j)$ and $\boldsymbol{\beta}' = (\beta', j')$ of $\mathscr{J}$, we write $\boldsymbol{\beta} \sim \boldsymbol{\beta}'$ if there exists an isomorphism $\beta \to \beta'$ in $\mathcal{C}$ compatible with the monoidal natural transformations $j$ and $j'$. Our main result is that the pivotal structures of $\mathcal{Z}(\mathcal{C})$ is in bijection with the set $\mathscr{J} / \mathord{\sim}$. More precisely, we prove the following theorem:

\begin{theorem}
  \label{thm:classify-piv-str}
  For an element $\boldsymbol{\beta} = (\beta, j) \in \mathscr{J}$, we define the morphism
  \begin{equation*}
    \Phi(\boldsymbol{\beta})_{\mathbf{V}}: \mathbf{V} \to \mathbf{V}^{**}
    \quad (\mathbf{V} = (V, \sigma) \in \mathcal{Z}(\mathcal{C}))
  \end{equation*}
  by the composition
  \begin{equation*}
    V
    \xrightarrow{\quad \id_V \otimes \coev_{\beta} \quad}
    V \otimes \beta \otimes \beta^*
    \xrightarrow{\quad \sigma(\beta) \otimes \id_{\beta^*} \quad}
    \beta \otimes V \otimes \beta^*
    \xrightarrow{\quad j_V \quad} V^{**},
  \end{equation*}
  where $\coev_{\beta}$ is the coevaluation \textup{(}see Subsection~\ref{subsec:rigidity} for our convention\textup{)}. The assignment $\boldsymbol{\beta} \mapsto \Phi(\boldsymbol{\beta})$ induces a bijection between the set $\mathscr{J} / \mathord{\sim}$ and the set of the pivotal structures of $\mathcal{Z}(\mathcal{C})$.
\end{theorem}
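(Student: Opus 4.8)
The plan is to transfer the problem from $\mathcal{Z}(\mathcal{C})$ to $\mathcal{C}$ along the forgetful tensor functor $U\colon\mathcal{Z}(\mathcal{C})\to\mathcal{C}$, $(V,\sigma)\mapsto V$. Recall that $U$ is an exact, faithful and dominant tensor functor; being a tensor functor, it commutes with the double-dual functors, i.e.\ $U(\mathbf{V}^{**})=U(\mathbf{V})^{**}$ canonically. Before using this I would check directly that $\Phi(\boldsymbol{\beta})$ is a pivotal structure. That $\Phi(\boldsymbol{\beta})_{\mathbf{V}}$ is a morphism $\mathbf{V}\to\mathbf{V}^{**}$ in $\mathcal{Z}(\mathcal{C})$ (compatibility with the half-braidings) follows from the hexagon identity for $\sigma$ together with the monoidality of $j$; naturality in $\mathbf{V}$ and monoidality of $\Phi(\boldsymbol{\beta})$ follow from the naturality of the half-braiding and the monoidality of $j$, plus the standard compatibility of $\coev$ with the tensor structure of $I^{\beta}$; and each $\Phi(\boldsymbol{\beta})_{\mathbf{V}}$ is invertible because $j_V$ and $\sigma(\beta)$ are, and an inverse of $\id_V\otimes\coev_{\beta}$ can be produced after the fact from $\eval_{\beta}$ and the zigzag identities. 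This is routine graphical bookkeeping. Note that $U(\Phi(\boldsymbol{\beta})_{\mathbf{V}})$ is exactly the composite in the statement and depends on $\mathbf{V}$ only through $V$ and $\sigma(\beta)$.

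The key reduction is the claim that post-composition with $U$ gives a bijection
\[
  \bigl\{\text{pivotal structures of }\mathcal{Z}(\mathcal{C})\bigr\}
  \xrightarrow{\ \sim\ }
  \Nat_{\otimes}\bigl(U,\, (-)^{**}\circ U\bigr),
  \qquad \psi \mapsto (U\psi_{\mathbf{V}})_{\mathbf{V}},
\]
where the right-hand side is the set of monoidal natural isomorphisms between the two tensor functors $\mathcal{Z}(\mathcal{C})\to\mathcal{C}$. Injectivity is immediate from faithfulness of $U$. For surjectivity, given a monoidal natural isomorphism $\phi\colon U\Rightarrow(-)^{**}\circ U$, I would show that each $\phi_{\mathbf{V}}\colon V\to V^{**}$ is automatically a morphism $\mathbf{V}\to\mathbf{V}^{**}$ in $\mathcal{Z}(\mathcal{C})$: applying the naturality of $\phi$ to the braiding isomorphisms $c_{\mathbf{V},\mathbf{X}}\colon\mathbf{V}\otimes\mathbf{X}\to\mathbf{X}\otimes\mathbf{V}$ of $\mathcal{Z}(\mathcal{C})$ and using the monoidality of $\phi$ and of $(-)^{**}$ forces the half-braiding identity for $\phi_{\mathbf{V}}$ at every object of the form $U(\mathbf{X})$; since $U$ is exact and dominant, these objects generate $\mathcal{C}$ under subquotients, on which the identity persists by naturality of the half-braidings. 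Hence $\phi$ lifts uniquely to a pivotal structure.

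It then remains to identify $\Nat_{\otimes}(U,(-)^{**}\circ U)$ with $\mathfrak{J}$. One direction is the explicit formula, now read as a transformation of functors $\mathcal{Z}(\mathcal{C})\to\mathcal{C}$: to $(\beta,j)\in\mathfrak{J}_{\beta}$ it assigns $\mathbf{V}=(V,\sigma)\mapsto j_V\circ(\sigma(\beta)\otimes\id_{\beta^*})\circ(\id_V\otimes\coev_{\beta})$, which is monoidal and natural by the same graphical verification. For the inverse one must extract, from $\phi$, an invertible object $\beta\in\mathcal{C}$ and a monoidal natural isomorphism $j\colon I^{\beta}\Rightarrow(-)^{**}$. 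The subtlety is that although $U$ is dominant it is very far from an equivalence: the condition ``$(-)^{**}\circ U\cong U$'' does \emph{not} pin down $(-)^{**}$, since for \emph{every} invertible $\beta$ the half-braiding already supplies a monoidal natural isomorphism $I^{\beta}\circ U\cong U$ (tensor $\coev_{\beta}$ on the right of $V$, slide $V$ past $\beta^{*}$ by $\sigma$, then cancel $\beta\otimes\beta^{*}$). The work is to prove the converse --- that every $\phi$ arises from exactly one such pair --- using the finiteness of $\mathcal{C}$ and the adjoint $R\dashv U$ (equivalently, the canonical coend $\mathcal{A}=UR(\unitobj)$ and the unimodularity of $\mathcal{Z}(\mathcal{C})$): one reads off $\beta$ from the behaviour of $\phi$ on the induced objects $R(X)$, and then $j$ is forced by the formula, the invertibility of $\sigma(\beta)$ and a zigzag. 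Combining this with the previous paragraph yields the asserted bijection; injectivity of $\Phi$ and disjointness of the pieces $\mathfrak{J}_{\beta_i}$ fall out, since two pairs inducing the same transformation must have $I^{\beta}\cong I^{\beta'}$ as tensor functors, forcing $\beta\cong\beta'$ among the chosen representatives and then $j=j'$.

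The main obstacle is this last step: locating the invertible object $\beta$ inside an abstract $\phi$ and proving the correspondence $(\beta,j)\leftrightarrow\phi$ is bijective --- in effect, that the surplus information $\phi$ carries beyond a (generally non-existent) pivotal structure of $\mathcal{C}$ is precisely an inner twist $I^{\beta}$ of the double dual. This is where the finite-tensor-category hypothesis, the end/coend calculus, and the unimodularity of the Drinfeld center are used essentially. By contrast, the well-definedness of $\Phi(\boldsymbol{\beta})$ and the fact that monoidality forces half-braiding-compatibility in the reduction step are comparatively mechanical graphical arguments.
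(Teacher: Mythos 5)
Your first two steps are plausible (the well-definedness check is the same routine verification the paper does via the transformation $\chi^{\beta}$, and your reduction to $\Nat_{\otimes}(U,(-)^{**}\circ U)$ can indeed be made to work: naturality of $\phi$ at the braidings plus monoidality gives the half-braiding compatibility of $\phi_{\mathbf{V}}$ at objects in the essential image of $U$, and since both sides of that identity are natural in the $\mathcal{C}$-variable and tensoring is exact, agreement propagates to subquotients, hence to all of $\mathcal{C}$ by dominance of $U$). But this reduction does not make the problem easier, and the third step---which you yourself flag as ``the main obstacle''---is left entirely unproved. Identifying $\Nat_{\otimes}(U,(-)^{**}\circ U)$ with $\mathfrak{J}$ \emph{is} the theorem, and ``one reads off $\beta$ from the behaviour of $\phi$ on the induced objects $R(X)$, and then $j$ is forced by the formula'' is not an argument: the formula defines $\Phi(\boldsymbol{\beta})$ in terms of $(\beta,j)$, and inverting it requires (a) a reason why the relevant data extracted from $\phi$ at $R(\unitobj)$ determines an invertible object, and (b) a descent mechanism producing a morphism $j_V\colon \beta\otimes V\otimes\beta^*\to V^{**}$ in $\mathcal{C}$ from morphisms defined only on the induced objects $R(V)$.

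Concretely, what is missing is the entire content of the paper's Sections 3 and 4.4--4.7. The paper extracts $\beta$ by observing that $h$ induces an algebra endomorphism of $\mathbf{A}=R(\unitobj)$ in $\mathcal{Z}(\mathcal{C})$, and then invokes the bijections $\Inv(\mathcal{C})\cong\mathrm{Alg}_{\mathcal{C}}(A,\unitobj)\cong\mathrm{Alg}_{\mathcal{Z}(\mathcal{C})}(\mathbf{A},\mathbf{A})$ (Lemmas~\ref{lem:Alg-C-A-1}--\ref{lem:Alg-ZC-A-A}), which in turn rest on the equivalence ${}_A\mathcal{C}\approx\mathcal{C}\boxtimes\mathcal{C}$ (Lemma~\ref{lem:adj-alg-mod}, via the fundamental theorem for Hopf bimodules over the central Hopf comonad). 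The morphism $j$ is then obtained by descending the Hopf-module isomorphism $\xi^F_V\circ h_{R(V)}\circ\zeta^{\beta}_V\colon R(V^{\beta})\to RF(V)$ along the equivalence $H\colon\mathcal{C}\to\mathcal{C}^Z_Z$ (Lemma~\ref{lem:FTHM}), and monoidality of $j$ and the two identities $\Psi_0\Phi_0=\id$, $\Phi_0\Psi_0=\id$ require the auxiliary monoidal isomorphisms $\xi^G$ and $\zeta^{\beta}$ and Lemmas~\ref{lem:chi-1}--\ref{lem:chi-2}. None of this (nor any substitute for it) appears in your proposal; moreover your appeal to ``the unimodularity of the Drinfeld center'' points at the wrong tool---unimodularity of $\mathcal{Z}(\mathcal{C})$ plays no role in the paper's proof, whereas the classification of algebra morphisms $A\to\unitobj$ by invertible objects and the Hopf-module descent are indispensable. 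As it stands, the proposal correctly locates the difficulty but does not resolve it, so it is not a proof.
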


The map $\boldsymbol{\beta} \mapsto \Phi(\boldsymbol{\beta})$ can be defined for arbitrary rigid monoidal categories, but it doesn't seem to be a bijection in general. Techniques of Hopf (co)monads, initiated and organized in \cite{MR2355605,MR2869176,MR2793022}, are essentially used to construct the inverse of the map $\boldsymbol{\beta} \mapsto \Phi(\boldsymbol{\beta})$.

This paper is organized as follows: In Section~\ref{sec:prelim}, we collect some basic results on monoidal categories from \cite{MR1712872,MR3242743} and fix some notations used throughout in this paper. In Section~\ref{sec:central-Hopf-comonad}, we recall the notion of Hopf comonad and the fundamental theorem for Hopf modules over a Hopf comonad. We then use it to prove some technical lemmas on the `adjoint algebra' in a finite tensor category $\mathcal{C}$, which naturally arise from a right adjoint of the forgetful functor $\mathcal{Z}(\mathcal{C}) \to \mathcal{C}$.

In Section~\ref{sec:main-result}, we prove Theorem~\ref{thm:classify-piv-str} in a more general form. For tensor autoequivalences $F$ and $G$ of $\mathcal{C}$, we denote by $\mathscr{J}(F, G)$ the class of all pairs $(\beta, j)$ consisting of an invertible object $\beta \in \mathcal{C}$ and a monoidal natural transformation $j: I^{\beta} F \to G$. There is an equivalence relation $\sim$ on $\mathscr{J}(F, G)$ defined in a similar way as above. We establish a bijection between the set $\mathscr{J}(F, G) / \mathord{\sim}$ and the set of monoidal natural transformations from $\widetilde{F}$ to $\widetilde{G}$, where $\widetilde{F}$ and $\widetilde{G}$ are braided tensor autoequivalences of $\mathcal{Z}(\mathcal{C})$ induced by $F$ and $G$, respectively (Theorem~\ref{thm:main-thm}). Theorem~\ref{thm:classify-piv-str} is actually the special case where $F = \id_{\mathcal{C}}$ and $G = (-)^{**}$. The detail of the proof of Theorem~\ref{thm:main-thm} is too technical to explain here; see the outline of Section~\ref{sec:main-result} given in \S\ref{subsec:outline-main-result}.

In Section~\ref{sec:applications}, we use Theorem~\ref{thm:main-thm} to show that the group $\Aut_{\otimes}(\id_{\mathcal{Z}(\mathcal{C})})$ of automorphisms of the tensor functor $\id_{\mathcal{Z}(\mathcal{C})}$ is isomorphic to the group $\Inv(\mathcal{Z}(\mathcal{C}))$ of the isomorphism classes of invertible objects of $\mathcal{Z}(\mathcal{C})$ (Theorem~\ref{thm:Aut-id-ZC}). We also obtain an exact sequence
\begin{equation*}
  1 \to \Aut_{\otimes}(\id_{\mathcal{C}})
  \to \Inv(\mathcal{Z}(\mathcal{C}))
  \to \Inv(\mathcal{C})
  \to \Aut_{\otimes}(\mathcal{C})
  \to \mathrm{BrPic}(\mathcal{C})
\end{equation*}
of groups involving the Brauer-Picard group (Theorem~\ref{thm:rose-zeli}). This is a generalization of a categorification of the Rosenberg-Zelinsky exact sequence for fusion categories given in \cite{MR3606516}.

\subsection*{Acknowledgment}

The author is supported by JSPS KAKENHI Grant Number 16K17568.

\section{Preliminaries}
\label{sec:prelim}

\subsection{Monoidal categories}
\label{subsec:monoidal-categories}

A {\em monoidal category} \cite[VII.1]{MR1712872} is a category $\mathcal{C}$ endowed with a functor $\otimes: \mathcal{C} \times \mathcal{C} \to \mathcal{C}$ (called the {\em tensor product}), an object $\unitobj \in \mathcal{C}$ (called the {\em unit object}), and natural isomorphisms
\begin{equation*}
  (X \otimes Y) \otimes Z \cong X \otimes (Y \otimes Z)
  \quad \text{and} \quad
  \unitobj \otimes X \cong X \cong X \otimes \unitobj
  \quad (X, Y, Z \in \mathcal{C})
\end{equation*}
satisfying the pentagon and the triangle axioms. If these natural isomorphisms are identities, then $\mathcal{C}$ is said to be {\em strict}.

Let $\mathcal{C}$ and $\mathcal{D}$ be monoidal categories. A {\em monoidal functor} \cite[XI.2]{MR1712872} from $\mathcal{C}$ to $\mathcal{D}$ is a functor $F: \mathcal{C} \to \mathcal{D}$ endowed with a natural transformation
\begin{equation*}
  F_2(X, Y): F(X) \otimes F(Y) \to F(X \otimes Y) \quad (X, Y \in \mathcal{C})
\end{equation*}
and a morphism $F_0: \unitobj \to F(\unitobj)$ in $\mathcal{C}$ satisfying certain conditions. A monoidal functor $F$ is said to be {\em strong} if the structure morphisms $F_2$ and $F_0$ are invertible, and said to be {\em strict} if $F_2$ and $F_0$ are identities.

Let $F$ and $G$ be monoidal functors from $\mathcal{C}$ to $\mathcal{D}$. A {\em monoidal natural transformation} from $F$ to $G$ is a natural transformation $\xi: F \to G$ between underlying functors satisfying $\xi_{\unitobj} \circ F_0 = G_0$ and $\xi_{X \otimes Y} \circ F_2(X, Y) = G_2(X, Y) \circ (\xi_X \otimes \xi_Y)$ for all objects $X, Y \in \mathcal{C}$. We denote by $\Nat_{\otimes}(F, G)$ the set of all monoidal natural transformations from $F$ to $G$ (when $\mathcal{C}$ is essentially small).

A monoidal functor $F: \mathcal{C} \to \mathcal{D}$ is said to be a {\em monoidal equivalence} if  there is a monoidal functor $\overline{F}: \mathcal{D} \to \mathcal{C}$ such that $\overline{F} F \cong \id_{\mathcal{C}}$ and $F \overline{F} \cong \id_{\mathcal{D}}$ as monoidal functors. It is well-known that a strong monoidal functor is a monoidal equivalence if and only if its underlying functor is an equivalence.

Two monoidal categories are said to be monoidally equivalent if there exists a monoidal equivalence between them. The Mac Lane coherence theorem states that every monoidal category is monoidally equivalent to a strict one. For simplicity, we assume that all monoidal categories are strict.

\subsection{Module categories}
\label{subsec:module-categories}

Let $\mathcal{C}$ be a monoidal category. A {\em left $\mathcal{C}$-module category} is a category $\mathcal{M}$ endowed with a functor $\ogreaterthan: \mathcal{C} \times \mathcal{M} \to \mathcal{M}$ (called the {\em action}) and natural isomorphisms $(X \otimes Y) \ogreaterthan M \cong X \ogreaterthan (Y \ogreaterthan M)$ and $\unitobj \ogreaterthan M \cong M$ satisfying the axioms similar to those of monoidal categories. A {\em left $\mathcal{C}$-module functor} is a functor between left $\mathcal{C}$-module categories preserving the actions. The notions of a right module category and functors between them are defined analogously. See \cite[Chapter 7]{MR3242743} for the precise definitions.

Let $\mathcal{M}$ be a left $\mathcal{C}$-module category. If $A$ is an algebra in $\mathcal{C}$ ($=$ a monoid in $\mathcal{C}$ \cite[VII.3]{MR1712872}), then the endofunctor $T := A \ogreaterthan (-)$ is a monad on $\mathcal{M}$. We denote by ${}_A \mathcal{M}$ the category of $T$-modules ($=$ the Eilenberg-Moore category of $T$) and refer to an object of ${}_A \mathcal{M}$ as a {\em left $A$-module in $\mathcal{M}$}. The category $\mathcal{N}_A$ of right $A$-modules in a right $\mathcal{C}$-module category $\mathcal{N}$ is defined in a similar way.

\subsection{Rigidity}
\label{subsec:rigidity}

Let $\mathcal{C}$ be a monoidal category, let $L$ and $R$ be objects of $\mathcal{C}$, and let $\varepsilon: L \otimes R \to \unitobj$ and $\eta: \unitobj \to R \otimes L$ be morphisms in $\mathcal{C}$. We say that $(L, \varepsilon, \eta)$ is a {\em left dual object} of $R$ and $(R, \varepsilon, \eta)$ is a {\em right dual object} of $L$ if the equations
\begin{equation*}
  (\varepsilon \otimes \id_L) \circ (\id_L \otimes \eta) = \id_L
  \quad \text{and} \quad
  (\id_R \otimes \varepsilon) \circ (\eta \otimes \id_R) = \id_R
\end{equation*}
hold. If this is the case, the morphisms $\varepsilon$ and $\eta$ are called the {\em evaluation} and the {\em coevaluation}, respectively. We say that the monoidal category $\mathcal{C}$ is {\em rigid} if every object of $\mathcal{C}$ has a left dual object and a right dual object.

Now we suppose that $\mathcal{C}$ is rigid. We denote by $(X^*, \eval_X, \coev_X)$ the left dual object of $X \in \mathcal{C}$. It is known that the assignment $X \mapsto X^*$ extends to a monoidal equivalence from $\mathcal{C}^{\op}$ to $\mathcal{C}^{\rev}$, where $\mathcal{C}^{\rev}$ is the monoidal category obtained from $\mathcal{C}$ by reversing the order of the tensor product. For simplicity, we assume that $(-)^*$ is a {\em strict} monoidal functor; see \cite[Lemma~5.4]{MR3314297} for a discussion.

We often use the graphical calculus to represent morphisms in a rigid monoidal category; see, {\it e.g.}, \cite{MR1321145}. Our convention is that a morphism goes from the top to the bottom of the diagram. The evaluation and the coevaluation are expressed, respectively, as follows:
\begin{equation*}
  \newcommand{\xylabel}[1]{{\scriptstyle\mathstrut #1}}
  \eval_X =
  \begin{array}{c}
    {\xy /r.75pc/: (0,1); p+(2,0) \mycap{-1}
    ?< *+!D{\scriptstyle \phantom{^*}X^*}
    ?> *+!D{\scriptstyle X} \endxy}
  \end{array}
  \text{\quad and \quad}
  \coev_X =
  \begin{array}{c}
    {\xy /r.75pc/: (0,-1); p+(2,0) \mycap{+1}
    ?< *+!U{\scriptstyle X}
    ?> *+!U{\scriptstyle \phantom{^*}X^*} \endxy}
  \end{array}
\end{equation*}

\subsection{The Drinfeld center}
\label{subsec:drinfeld-center}

Let $\mathcal{C}$ be a rigid monoidal category. The {\em Drinfeld center} of $\mathcal{C}$ is the category $\mathcal{Z}(\mathcal{C})$ defined as follows: An object of this category is a pair $(V, \sigma_V)$ consisting of an object $V \in \mathcal{C}$ and a natural transformation
\begin{equation*}
  \sigma_V(X): V \otimes X \to X \otimes V \quad (X \in \mathcal{C}),
\end{equation*}
called the {\em half-braiding}, such that the equations
\begin{equation}
  \label{eq:half-br}
  \sigma_V(\unitobj) = \id_{V}
  \quad \text{and} \quad
  \sigma_V(X \otimes Y) = (\id_X \otimes \sigma_V(Y)) (\sigma_{V}(X) \otimes \id_Y)
\end{equation}
hold for all $X, Y \in \mathcal{C}$. If $\mathbf{V} = (V, \sigma_V)$ and $\mathbf{W} = (W, \sigma_W)$ are objects of $\mathcal{Z}(\mathcal{C})$, then a {\em morphism} from $\mathbf{V}$ to $\mathbf{W}$ is a morphism $f: V \to W$ in $\mathcal{C}$ satisfying
\begin{equation*}
  \sigma_V(X) \circ (f \otimes \id_X) = (\id_X \otimes f) \circ \sigma_W(X)
\end{equation*}
for all objects $X \in \mathcal{C}$.

Thanks to the rigidity of $\mathcal{C}$, a half-braiding is automatically invertible. Indeed, if $(V, \sigma_V)$ is an object of $\mathcal{Z}(\mathcal{C})$, the inverse of $\sigma_V$ is given by
\begin{equation*}
  \sigma_V(X)^{-1} = (\eval'_X \otimes \id_{V} \otimes \id_X)
  (\id_X \otimes \sigma_V({}^* \! X) \otimes \id_X)
  (\id_X \otimes \id_V \otimes \coev'_X)
\end{equation*}
for $X \in \mathcal{C}$, where $({}^* \! X, \eval_X', \coev'_X)$ is a right dual object of $X$. In particular, since $(X, \eval_X, \coev_X)$ is a right dual object of $X^*$, we have
\begin{equation}
  \label{eq:half-br-inv}
  \sigma_V(X^*)^{-1} = (\eval_X \otimes \id_{V} \otimes \id_X)
  (\id_X \otimes \sigma_V(X) \otimes \id_X)
  (\id_X \otimes \id_V \otimes \coev_X)
\end{equation}
for $X \in \mathcal{C}$. If we represent $\sigma_V(X)$ and its inverse by
\begin{equation*}
  \knotholesize{1em}
  \sigma_V(X) =
  {\xy /r.5pc/:
    (0,1.5) \pushrect{3}{3} \vtwist~{s0}{s1}{s2}{s3},
    s0 *+!D{\scriptstyle V}, s1 *+!D{\scriptstyle X},
    s2 *+!U{\scriptstyle X}, s3 *+!U{\scriptstyle V},
    \endxy}
  \quad \text{and} \quad
  \sigma_V(X)^{-1} =
  {\xy /r.5pc/:
    (0,1.5) \pushrect{3}{3} \vtwistneg~{s0}{s1}{s2}{s3},
    s0 *+!D{\scriptstyle X}, s1 *+!D{\scriptstyle V},
    s2 *+!U{\scriptstyle V}, s3 *+!U{\scriptstyle X},
    \endxy}
  \quad (X \in \mathcal{C}),
\end{equation*}
respectively, then the equations~\eqref{eq:half-br} and \eqref{eq:half-br-inv} are seen as follows:
\begin{equation*}
  \knotholesize{8pt}
  {\xy /r.75pc/:
    (0,1.5) \pushrect{3}{3} \vtwist~{s0}{s1}{s2}{s3},
    s0 *+!D{\scriptstyle {V}}, s1 *+!D{\scriptstyle {X \smash{\otimes} Y}},
    s2 *+!U{\scriptstyle {X \smash{\otimes} Y}}, s3 *+!U{\scriptstyle {V}},
    \endxy}
  \mathop{=}^{\text{\eqref{eq:half-br}}}
  {\xy /r.75pc/:
    (0,1.5) \pushrect{2}{1.5},
    \vtwist~{s0}{s1}{s2}{s3},
    s0 *+!D{\scriptstyle {V}},
    s1 *+!D{\scriptstyle {X}},
    s1+(2,0)="P3" *+!D{\scriptstyle {Y}},
    "P3"; p-(0,1.5) **\dir{-},
    s3 \pushrect{2}{1.5}, \vtwist~{s0}{s1}{s2}{s3},
    s2 *+!U{\scriptstyle {Y}},
    s3 *+!U{\scriptstyle {V}},
    s6; \makecoord{p}{s2} **\dir{-}
    ?> *+!U{\scriptstyle {X}},
    \endxy}
  \qquad
  {\xy /r.75pc/:
    (0,1.5) \pushrect{3}{3} \vtwistneg~{s0}{s1}{s2}{s3},
    s0 *+!D{\scriptstyle X\smash{^*}}, s1 *+!D{\scriptstyle V},
    s2 *+!U{\scriptstyle V}, s3 *+!U{\scriptstyle X\smash{^*}},
    \endxy}
  \mathop{=}^{\text{\eqref{eq:half-br-inv}}}
  {\xy /r1pc/:
    (0,1)="P1"; p-(0,2)="P2",
    "P1"; p-(0,.5) **\dir{-}
    ?> \pushrect{1}{1}, \vtwist~{s0}{s1}{s2}{s3},
    s1; p+(1.5,0) \mycap{.75} ?>; \makecoord{p}{"P2"} **\dir{-}
    ?> *+!U{\scriptstyle \,\,X\smash{^*}},
    s2; p-(1.5,0) \mycap{.75} ?>; \makecoord{p}{"P1"} **\dir{-}
    ?> *+!D{\scriptstyle \,\,X\smash{^*}},
    "P1" *+!D{\scriptstyle V},
    s3; \makecoord{p}{"P2"} **\dir{-}
    ?> *+!U{\scriptstyle V},
    \endxy}
\end{equation*}

Given two objects $\mathbf{V} = (V, \sigma_V)$ and $\mathbf{W} = (W, \sigma_W)$ of $\mathcal{Z}(\mathcal{C})$, their tensor product is defined by $\mathbf{V} \otimes \mathbf{W} = (V \otimes W, \sigma_{V \otimes W})$, where $\sigma_{V \otimes W}$ is given by
\begin{equation*}
  \knotholesize{1em}
  \sigma_{V \otimes W}(X) =
  \begin{array}{c}
    {\xy /r.75pc/:
    (0,1.5)="P1"; p+(2,0) \pushrect{2}{1.5},
    \vtwist~{s0}{s1}{s2}{s3},
    "P1" *+!D{\scriptstyle {V}},
    s0 *+!D{\scriptstyle {W}},
    s1 *+!D{\scriptstyle {X}},
    "P1"; p-(0,1.5) **\dir{-}
    ?> \pushrect{2}{1.5}, \vtwist~{s0}{s1}{s2}{s3},
    s2 *+!U{\scriptstyle {X}},
    s3 *+!U{\scriptstyle {V}},
    s7; \makecoord{p}{s2} **\dir{-}
    ?> *+!U{\scriptstyle {W}},
    \endxy}
  \end{array}
  \quad (X \in \mathcal{C}).
\end{equation*}
The category $\mathcal{Z}(\mathcal{C})$ is in fact a braided rigid monoidal category with respect to this tensor product. See \cite[Section 7.13]{MR3242743} for details. For later use, we recall that a description of a left dual object. Given $\mathbf{V} = (V, \sigma_V) \in \mathcal{Z}(\mathcal{C})$, we define $\mathbf{V}^* \in \mathcal{Z}(\mathcal{C})$ by $\mathbf{V} = (V^*, \sigma_{V^*})$, where the half-braiding $\sigma_{V^*}$ is given by
\begin{equation}
  \label{eq:half-br-left-dual}
  \knotholesize{8pt}
  \sigma_{V^*}(X) = {\xy /r1pc/:
    (0,1)="P1"; p-(0,2)="P2",
    "P1"; p-(0,.5) **\dir{-}
    ?> \pushrect{1}{1}, \vtwistneg~{s0}{s1}{s2}{s3},
    s1; p+(1.5,0) \mycap{.75} ?>; \makecoord{p}{"P2"} **\dir{-}
    ?> *+!U{\scriptstyle \,\,V\smash{^*}},
    s2; p-(1.5,0) \mycap{.75} ?>; \makecoord{p}{"P1"} **\dir{-}
    ?> *+!D{\scriptstyle \,\,V\smash{^*}},
    "P1" *+!D{\scriptstyle X},
    s3; \makecoord{p}{"P2"} **\dir{-}
    ?> *+!U{\scriptstyle X},
    \endxy}
\end{equation}
for $X \in \mathcal{C}$. Then the triple $(\mathbf{V}^*, \eval_V, \coev_V)$ is a left dual object of $\mathbf{V}$.

\begin{remark}
  \label{rem:Dri-cen-double-dual}
  By \eqref{eq:half-br-inv} and \eqref{eq:half-br-left-dual}, we have $\sigma_{V^*}(X^*) = \sigma_V(X)^*$. In particular, the half-braiding $\sigma_{V^{**}}$ of $\mathbf{V}^{**}$ satisfies $\sigma_{V^{**}}(X^{**}) = \sigma_V(X)^{**}$ for all $X \in \mathcal{C}$.
\end{remark}

\subsection{Duality transformation}
\label{subsec:duality-transform}

Let $\mathcal{C}$ and $\mathcal{D}$ be rigid monoidal categories, and let $F: \mathcal{C} \to \mathcal{D}$ be a strong monoidal functor. If $X \in \mathcal{C}$ is an object, then $F(X^*)$ is a left dual object of $F(X)$ with the evaluation and the coevaluation given by
\begin{equation}
  \label{eq:def-duality-trans-0}
  e_X = F_0^{-1} \circ F(\eval_X) \circ F_2(X^*, X)
  \quad \text{and} \quad
  c_X = F_2(X, X^*)^{-1} \circ F(\coev_X) \circ F_0,
\end{equation}
respectively. The {\em duality transformation} \cite[Section 1]{MR2381536} of $F$ is the natural isomorphism $\gamma^F_X: F(X^*) \to F(X)^*$ determined by either of the equations
\begin{gather}
  \label{eq:def-duality-trans}
  e_X = \eval_{F(X)} \circ (\gamma^F_X \otimes \id_{F(X)})
  \quad \text{or} \quad
  \coev_{F(X)} = (\id_{F(X)} \otimes \gamma^F_X) \circ c_X
\end{gather}
for all $X \in \mathcal{C}$. We note that the assignments $X \mapsto F(X^*)$ and $X \mapsto F(X)^*$ are strong monoidal functors from $\mathcal{C}^{\op}$ to $\mathcal{D}^{\rev}$. The duality transformation $\gamma^F$ is in fact an isomorphism of monoidal functors \cite{MR2381536}.

Let $G: \mathcal{C} \to \mathcal{D}$ be another strong monoidal functor. The following well-known lemma is obtained by interpreting \cite[Proposition 7.1]{MR1250465} in our notation.

\begin{lemma}
  \label{lem:mon-nat-inv}
  Every monoidal natural transformation $j: F \to G$ is invertible with the inverse determined by the following equation:
  \begin{equation*}
    (j_X^{-1})^* \circ \gamma^F_X = \gamma^G_X \circ j_{X^*} \quad (X \in \mathcal{C}).
  \end{equation*}
\end{lemma}

\subsection{Invertible objects}
\label{subsec:inv-obj}

Let $\mathcal{C}$ be a rigid monoidal category, and let $\beta \in \mathcal{C}$ be an object. Then the endofunctor $I^{\beta} := \beta \otimes (-) \otimes \beta^*$ on $\mathcal{C}$ is a monoidal functor with
\begin{equation*}
  I^{g}_0 = \coev_{\beta}
  \quad \text{and} \quad
  I^{\beta}_2(X, Y) = \id_{\beta} \otimes \id_X \otimes \eval_{\beta} \otimes \id_Y \otimes \id_{\beta^*}
  \quad (X, Y \in \mathcal{C}).
\end{equation*}
An object $\beta \in \mathcal{C}$ is said to be {\em invertible} if $\eval_{\beta}$ and $\coev_{\beta}$ are isomorphisms. If $\beta$ is invertible, then $I^{\beta}$ is a strong monoidal functor. For later use, we compute the duality transformation of this strong monoidal functor:

\begin{lemma}
  \label{lem:conj-beta-duality-trans}
  Let $\beta \in \mathcal{C}$ be an invertible object. Then the duality transformation of the strong monoidal functor $I^{\beta}$ is given by
  \begin{equation*}
    I^{\beta}(X^*) = \beta \otimes X^* \otimes \beta^*
    \xrightarrow{\quad c \otimes \id \otimes \id \quad}
    \beta^{**} \otimes X^* \otimes \beta^*
    = I^{\beta}(X)^*
    \quad (X \in \mathcal{C}),
  \end{equation*}
  where $c: \beta \to \beta^{**}$ is the isomorphism determined by either of
  \begin{equation*}
    \eval_{\beta^*} \circ (c \otimes \id_{\beta^*})
    = \coev_{\beta}^{-1}
    \quad \text{or} \quad
    \coev_{\beta^*}
    = (\id_{\beta^*} \otimes c) \circ \eval_{\beta}^{-1}.
  \end{equation*}
\end{lemma}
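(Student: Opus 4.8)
The plan is to verify Lemma~\ref{lem:conj-beta-duality-trans} directly from the defining conditions \eqref{eq:def-duality-trans} of the duality transformation, applied to the strong monoidal functor $F = I^{\beta}$. Concretely, one checks that the proposed natural isomorphism $\gamma_X := c \otimes \id_{X^*} \otimes \id_{\beta^*} \colon \beta \otimes X^* \otimes \beta^* \to \beta^{**} \otimes X^* \otimes \beta^*$ satisfies, say, the first condition in \eqref{eq:def-duality-trans}, namely $e_X = \eval_{I^{\beta}(X)} \circ (\gamma_X \otimes \id_{I^{\beta}(X)})$, where $e_X$ is computed from \eqref{eq:def-duality-trans-0} using the explicit formulas for $I^{\beta}_0 = \coev_{\beta}$ and $I^{\beta}_2$ recorded in Subsection~\ref{subsec:inv-obj}. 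Since $\gamma^F$ is uniquely determined by either condition, establishing one of them suffices.

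First I would write out $e_X$ explicitly. By \eqref{eq:def-duality-trans-0},
\begin{equation*}
  e_X = \coev_{\beta}^{-1} \circ I^{\beta}(\eval_X) \circ I^{\beta}_2(X^*, X),
\end{equation*}
which, unravelled, is the composite that takes $\beta \otimes X^* \otimes \beta^* \otimes \beta \otimes X \otimes \beta^*$, uses $\eval_{\beta}$ in the middle ($\beta^* \otimes \beta \to \unitobj$), then $\eval_X$ ($X^* \otimes X \to \unitobj$), then $\coev_{\beta}^{-1}$ ($\beta \otimes \beta^* \to \unitobj$). On the other hand, $\eval_{I^{\beta}(X)}$ is the evaluation of the strict left dual $I^{\beta}(X)^* = \beta^{**} \otimes X^* \otimes \beta^*$ against $I^{\beta}(X) = \beta \otimes X \otimes \beta^*$, which factors through $\eval_{\beta^*}$ (on the outer $\beta^* \otimes \beta^{**}$ — wait, one must be careful with the order), $\eval_X$, and $\eval_{\beta}$. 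Precomposing with $\gamma_X \otimes \id$ inserts $c$ on the leftmost factor. The point is then to match these two composites: the only discrepancy is the innermost/outermost $\beta$-contraction, where one side has $\coev_{\beta}^{-1} \colon \beta \otimes \beta^* \to \unitobj$ and the other has $\eval_{\beta^*} \circ (c \otimes \id_{\beta^*}) \colon \beta \otimes \beta^* \to \unitobj$. These agree precisely by the defining equation $\eval_{\beta^*} \circ (c \otimes \id_{\beta^*}) = \coev_{\beta}^{-1}$ stated for $c$ in the lemma, and the remaining $\eval_X$ and $\eval_{\beta}$ pieces appear identically on both sides.

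The main obstacle, and the place demanding genuine care rather than routine manipulation, is bookkeeping the order of tensor factors and the placement of the various evaluation/coevaluation maps — in particular checking that $I^{\beta}(X)^* = \beta^{**} \otimes X^* \otimes \beta^*$ is indeed the strict left dual with evaluation factoring as claimed, and that the "middle" contractions ($\eval_X$ and one copy of $\eval_{\beta}$) genuinely cancel between $e_X$ and $\eval_{I^{\beta}(X)} \circ (\gamma_X \otimes \id)$ with no stray sign or associativity glitch (recall we work strictly, and $(-)^*$ is assumed strict monoidal, so $(\beta \otimes X \otimes \beta^*)^* = \beta^{**} \otimes X^* \otimes \beta^*$ on the nose). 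This is most transparently done with a string-diagram computation. Finally, one records that $c \colon \beta \to \beta^{**}$ is an isomorphism because $\beta$ is invertible — so $\eval_{\beta}$ and $\coev_{\beta}$ are isomorphisms — hence $\gamma_X$ is an isomorphism, consistent with $I^{\beta}$ being strong monoidal; and the equivalence of the two displayed characterizations of $c$ follows from the standard zig-zag identities for the dual pair $(\beta^*, \eval_{\beta}, \coev_{\beta})$ together with those for $(\beta^{**}, \eval_{\beta^*}, \coev_{\beta^*})$.
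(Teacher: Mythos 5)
Your proposal is correct and takes essentially the same route as the paper: compute $e_X$ from \eqref{eq:def-duality-trans-0} with $F = I^{\beta}$, replace the outer contraction $\coev_{\beta}^{-1}$ by $\eval_{\beta^*} \circ (c \otimes \id_{\beta^*})$ using the defining property of $c$, and recognize the result as $\eval_{I^{\beta}(X)} \circ \bigl((c \otimes \id_{X^*} \otimes \id_{\beta^*}) \otimes \id_{I^{\beta}(X)}\bigr)$, so the first condition in \eqref{eq:def-duality-trans} identifies the duality transformation as claimed. The only point needing care, which you flag correctly, is the ordering of the tensor factors in $\eval_{I^{\beta}(X)}$ (the outer contraction is $\eval_{\beta^*}$ on $\beta^{**} \otimes \beta^*$), and this matches the paper's three-line computation.
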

\begin{proof}
  The morphism $e_X$ defined by~\eqref{eq:def-duality-trans-0} with $F = I^{\beta}$ is given by
  \begin{align*}
    e_X & = \coev_{\beta}^{-1}
    \circ (\id_{\beta} \otimes \eval_X \otimes \id_{\beta^*})
    \circ (\id_{\beta} \otimes \id_{X^*} \otimes \eval_{\beta} \otimes \id_X \otimes \id_{\beta^*}) \\
    & = \eval_{\beta^*} \circ (c \otimes \id_{\beta^*}) \circ (\id_{\beta} \otimes \eval_{\beta \otimes X} \otimes \id_{\beta^*}) \\
    & = \eval_{\beta \otimes X \otimes \beta^*} \circ (c \otimes \id_{X^*} \otimes \id_{\beta^*} \otimes \id_{\beta} \otimes \id_X \otimes \id_{\beta^*}).
  \end{align*}
  Thus, by~\eqref{eq:def-duality-trans}, the duality transformation of $I^{\beta}$ is given as stated.
\end{proof}

\subsection{Finite tensor categories}

Throughout this paper, we work over an algebraically closed field $k$ of arbitrary characteristic. By an algebra over $k$, we always mean an associative and unital algebra over $k$. Given an algebra $A$ over $k$, we denote by $A\mbox{\rm -mod}$ the category of finite-dimensional left $A$-modules. A {\em finite abelian category} over $k$ is a $k$-linear category that is $k$-linearly equivalent to $A\mbox{\rm -mod}$ for some finite-dimensional algebra $A$ over $k$.

A {\em finite tensor category} over $k$ \cite{MR2119143,MR3242743} is a rigid monoidal category $\mathcal{C}$ such that $\mathcal{C}$ is a finite abelian category over $k$, the tensor product of $\mathcal{C}$ is $k$-linear in each variable, and the unit object $\unitobj \in \mathcal{C}$ is a simple object. By a {\em tensor functor}, we mean a $k$-linear exact strong monoidal functor between finite tensor categories. A morphism of tensor functors is just a monoidal natural transformation.

Let $\mathcal{C}$ be a finite tensor category over $k$. We remark some non-trivial facts on invertible objects of $\mathcal{C}$. First, an invertible object of $\mathcal{C}$ is a simple object. Hence the isomorphism classes of invertible objects of $\mathcal{C}$ is finite. Second, an object $V \in \mathcal{C}$ is invertible if and only if there exists an object $W \in \mathcal{C}$ such that $V \otimes W \cong \unitobj$; see, {\it e.g.}, \cite[Chapter 4]{MR3242743}.

\subsection{End}

Let $\mathcal{A}$ and $\mathcal{C}$ be categories, and let $H: \mathcal{A}^{\op} \times \mathcal{A} \to \mathcal{C}$ be a functor. A {\em dinatural transformation} from an object $E \in \mathcal{C}$ to $H$ is a family
\begin{equation*}
  \{ \pi(X) \in \Hom_{\mathcal{C}}(E, H(X,X)) \}_{X \in \mathcal{A}}
\end{equation*}
of morphisms in $\mathcal{C}$ that is {\em dinatural} in the sense that the equation
\begin{equation*}
  H(f, \id_Y) \circ \pi(Y) = H(\id_X, f) \circ \pi(X)
\end{equation*}
holds for all morphisms $f: X \to Y$ in $\mathcal{A}$. An {\em end} of the functor $H$ is a pair $(E, \pi)$ consisting of an object $E \in \mathcal{C}$ and a dinatural transformation $\pi$ from $E$ to $H$ that is `universal' among such pairs. The universal property implies that, if it exists, an end of $H$ is unique up to unique isomorphism. We denote by $\int_{X \in \mathcal{A}} H(X,X)$
the end of $H$; see \cite[IX]{MR1712872} for more detail.

Now let $\mathcal{B}$ be another category, and let $R: \mathcal{A} \to \mathcal{B}$ and $T: \mathcal{B}^{\op} \times \mathcal{A} \to \mathcal{C}$ be functors. Suppose that $R$ admits a left adjoint $L: \mathcal{B} \to \mathcal{A}$ with unit $\eta: \id_{\mathcal{B}} \to R L$ and counit $\varepsilon: L R \to \id_{\mathcal{A}}$. Suppose, moreover, that the end
\begin{equation*}
  E_R := \int_{X \in \mathcal{A}} T(R(X), X)
\end{equation*}
exists. Let $\pi(X): E_R \to T(R(X), X)$ be the universal dinatural transformation of the end $E_R$. We define a morphism $\pi'(Y)$ by
\begin{equation*}
  \pi'(Y) := \left(
    E_R
    \xrightarrow{\quad \pi(L(Y)) \quad}
    T(R L(Y), L(Y))
    \xrightarrow{\quad T(\eta_Y^{}, \id_{L(Y)}^{}) \quad}
    T(Y, L(Y))
  \right)
\end{equation*}
for $Y \in \mathcal{C}$. Then $(E_R, \pi')$ is an end of the functor $(Y_1, Y_2) \mapsto T(Y_1, L(Y_2))$ (the dual of \cite[Lemma 3.9]{MR2869176}). We may write this fact as follows:
\begin{equation}
  \label{eq:end-adj}
  \int_{X \in \mathcal{A}} T(R(X), X)
  \cong \int_{Y \in \mathcal{B}} T(Y, L(Y)).
\end{equation}
The following formula, which is obtained by~\eqref{eq:end-adj}, is useful:

\begin{lemma}
  \label{lem:end-equiv-twist}
  Let $H: \mathcal{A}^{\op} \times \mathcal{A} \to \mathcal{V}$ be a functor. Suppose that an end of $H$ exists. Then, for every autoequivalence $G$ of $\mathcal{A}$, we have
  \begin{equation}
    \label{eq:end-equiv}
    \int_{X \in \mathcal{A}} H(X, X)
    \cong
    \int_{X \in \mathcal{A}} H(G(X), G(X)),
  \end{equation}
  meaning that the end in the right-hand side exists and canonically isomorphic to the end of $H$.
\end{lemma}
\begin{proof}
  Let $\overline{G}$ be a quasi-inverse of $G$. Since $G \dashv \overline{G}$, we have
  \begin{equation*}
    \int_{X \in \mathcal{A}} H(X, X)
    \cong
    \int_{X \in \mathcal{A}} H(G \overline{G}(X), X)
    \cong
    \int_{X \in \mathcal{A}} H(G(X), G(X))
  \end{equation*}
  by equation~\eqref{eq:end-adj} with $T = H(G(-), -)$.
\end{proof}

Let $E$ be the end of $H$, and let $\pi(X): E \to H(X, X)$ be the universal dinatural transformation of the end $E$. Given an autoequivalence $G$ of $\mathcal{A}$, we set $\pi^G(X) = \pi(G(X))$ and $H^G(X, Y) = H(G(X), G(Y))$ for $X, Y \in \mathcal{C}$. By the construction of the canonical isomorphism~\eqref{eq:end-equiv}, we see that $(E, \pi^G)$ is an end of $H^G$. We will use this consequence in the following way: Given an object $E' \in \mathcal{C}$ and a dinatural transformation $\xi: E' \to H^G$, there exists a unique morphism $f: E' \to E$ such that $\xi(X) = \pi^G(X) \circ f$ for all objects $X \in \mathcal{A}$.

\section{The central Hopf comonad}
\label{sec:central-Hopf-comonad}

\subsection{Hopf comonads}

Let $\mathcal{C}$ be a monoidal category. A {\em monoidal comonad} on $\mathcal{C}$ is a comonad $T = (T, \delta, \varepsilon)$ on $\mathcal{C}$ such that the endo\-functor $T: \mathcal{C} \to \mathcal{C}$ is a monoidal functor and the comultiplication $\delta: T \to T T$ and the counit $\varepsilon: T \to \id_{\mathcal{C}}$ of $T$ are monoidal natural transformations.

Let $T$ be a monoidal comonad on $\mathcal{C}$. A {\em $T$-comodule} is a pair $(V, \rho)$ consisting of an object $V$ of $\mathcal{C}$ and a morphism $\rho: V \to T(V)$ such that the equations
\begin{equation}
  \label{eq:def-T-comodule}
  \delta_V \circ \rho = T(\rho) \circ \rho
  \quad \text{and} \quad
  \varepsilon_V \circ \rho = \id_V
\end{equation}
hold. We denote by $\mathcal{C}^T$ the category of $T$-comodules ($=$ the Eilenberg-Moore category of the comonad $T$). The category $\mathcal{C}^T$ is in fact a monoidal category with respect to the tensor product given by
\begin{equation}
  \label{eq:def-T-comod-tensor}
  (M, \rho_M) \otimes (N, \rho_N) = (M \otimes N, T_2(M,N) \circ (\rho_M \otimes \rho_N))
\end{equation}
for $(M, \rho_M), (N, \rho_N) \in \mathcal{C}^T$. The unit object of $\mathcal{C}^T$ is $\unitobj^T := (\unitobj, T_0)$.

A {\em Hopf comonad} on $\mathcal{C}$ ({\it cf}. \cite{MR2793022}) is a monoidal comonad $T$ on $\mathcal{C}$ such that the morphisms $T_2(X, T(Y)) \circ (\id_{T(X)} \otimes \delta_Y)$ and $T_2(T(X), Y) \circ (\delta_X \otimes \id_{T(Y)})$ are invertible for all objects $X, Y \in \mathcal{C}$. Since a Hopf comonad on $\mathcal{C}$ is just a Hopf monad on $\mathcal{C}^{\op}$ in the sense of \cite{MR2793022}, results on Hopf monads established in \cite{MR2355605,MR2869176,MR2793022} can be translated into results on Hopf comonads. For example, under the assumption that $\mathcal{C}$ is rigid, it follows from \cite[Theorem 3.8]{MR2355605} that a monoidal comonad $T$ on $\mathcal{C}$ is a Hopf comonad if and only if $\mathcal{C}^T$ is rigid.

\subsection{Hopf modules}
\label{subsec:hopf-modules}

Let $\mathcal{C}$ be a finite tensor category, and let $T$ be a $k$-linear exact Hopf comonad on $\mathcal{C}$ with comultiplication $\delta$ and counit $\varepsilon$. Then the category $\mathcal{C}^T$ is also a finite tensor category such that the forgetful functor
\begin{equation}
  \label{eq:Hopf-monad-T-forget}
  U: \mathcal{C}^T \to \mathcal{C},
  \quad (V, \rho) \mapsto V
\end{equation}
preserves and reflects exact sequences. The free $T$-comodule functor
\begin{equation}
  \label{eq:Hopf-monad-T-free}
  R: \mathcal{C} \to \mathcal{C}^T,
  \quad V \mapsto (T(V), \delta_V)
\end{equation}
is right adjoint to $U$ with unit $\eta: \id_{\mathcal{C}^T} \to R U$ and counit $\epsilon: U R \to \id_{\mathcal{C}}$ given by
\begin{equation}
  \label{eq:Hopf-monad-T-unit-counit}
  \eta_{\mathbf{M}} = \rho_M
  \quad (\mathbf{M} = (M, \rho_M) \in \mathcal{C}^T)
  \quad \text{and} \quad
  \epsilon_V = \varepsilon_V
  \quad (V \in \mathcal{C}).
\end{equation}
By the definition of the tensor product of $\mathcal{C}^T$, the functor $U$ is strict monoidal. The functor $R$ is a $k$-linear exact monoidal functor with the same monoidal structure as $T$ ({\it i.e.}, $R_0 = T_0$ and $R_2 = T_2$), and the adjunction $U \dashv R$ is a {\em monoidal adjunction} in the sense that the unit and the counit of $U \dashv R$ are monoidal natural transformations.

The left Hopf operator $\mathbb{H}^{(\ell)}$ and the right Hopf operator $\mathbb{H}^{(r)}$ of the monoidal adjunction $U \dashv R$ are the natural transformations defined by
\begin{align*}
  \mathbb{H}^{(\ell)}_{X,\mathbf{M}}
  & := R_2(X, U(\mathbf{M})) \circ (\id_{R(X)} \otimes \rho_M)
    : R(X) \otimes \mathbf{M} \to R(X \otimes M)
  \quad \text{and} \\
  \mathbb{H}^{(r)}_{\mathbf{M},X}
  & := R_2(U(\mathbf{M}), X) \circ (\rho_M \otimes \id_{R(X)})
    : \mathbf{M} \otimes R(X) \to R(M \otimes X),
\end{align*}
respectively, for $X \in \mathcal{C}$ and $\mathbf{M} = (M, \rho_M) \in \mathcal{C}^T$ ({\it cf}. \cite[Subsection 2.8]{MR2793022}). Since $T$ is a Hopf comonad, the Hopf operators $\mathbb{H}^{(\ell)}$ and $\mathbb{H}^{(r)}$ are invertible \cite[Theorem 2.15]{MR2793022}. We note the following identities:

\begin{lemma}
  \label{lem:Hopf-ope-monoidal}
  For all $\mathbf{M} = (M, \rho_M)$, $\mathbf{N} = (N, \rho_N) \in \mathcal{C}^T$ and $X \in \mathcal{C}$, we have\textup{:}
  \begin{gather}
    \label{eq:Hopf-ope-1}
    \mathbb{H}^{(\ell)}_{X, \mathbf{M} \otimes \mathbf{N}}
    = \mathbb{H}^{(\ell)}_{X \otimes M, \mathbf{N}}
    \circ (\mathbb{H}^{(\ell)}_{X, \mathbf{M}} \otimes \id_{\mathbf{N}}^{}), \\
    \label{eq:Hopf-ope-2}
    \mathbb{H}^{(r)}_{\mathbf{M} \otimes \mathbf{N}, X}
    = \mathbb{H}^{(r)}_{\mathbf{M}, N \otimes X}
    \circ (\id_{\mathbf{M}}^{} \otimes \mathbb{H}^{(r)}_{\mathbf{N}, X}), \\
    \label{eq:Hopf-ope-3}
    \mathbb{H}^{(\ell)}_{X, \unitobj^T}
    = \id_{R(X)}^{}
    = \mathbb{H}^{(r)}_{\unitobj^T, X}, \\
    \label{eq:Hopf-ope-4}
    R(\varepsilon_X) \circ \mathbb{H}^{(\ell)}_{\unitobj, R(X)} = R_2(\unitobj, X),
    \quad
    R(\varepsilon_X) \circ \mathbb{H}^{(r)}_{R(X), \unitobj} = R_2(X, \unitobj).
  \end{gather}
\end{lemma}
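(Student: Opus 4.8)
The plan is to verify each of the four identities by unwinding the definitions of $\mathbb{H}^{(\ell)}$ and $\mathbb{H}^{(r)}$ together with the monoidal structure of $\mathcal{C}^T$ given by~\eqref{eq:def-T-comod-tensor}, using the naturality of $R_2$ and the coassociativity/counit axioms~\eqref{eq:def-T-comodule} for the coactions. Since $R$ carries the same monoidal structure as $T$ and $U$ is strict monoidal, the only `new' ingredient beyond~\eqref{eq:def-T-comod-tensor} is the compatibility of $R_2 = T_2$ with the comonad data, i.e.\ that $\delta$ and $\varepsilon$ are monoidal. I would treat the identities in the order~\eqref{eq:Hopf-ope-3}, \eqref{eq:Hopf-ope-4}, \eqref{eq:Hopf-ope-1}, \eqref{eq:Hopf-ope-2}, since the first two are short and the last two are the substantive ones (and~\eqref{eq:Hopf-ope-2} is just the mirror image of~\eqref{eq:Hopf-ope-1}).

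First I would dispatch~\eqref{eq:Hopf-ope-3}: the coaction on the unit object $\unitobj^T = (\unitobj, T_0)$ is $T_0 = R_0$, so $\mathbb{H}^{(\ell)}_{X, \unitobj^T} = R_2(X, \unitobj)\circ(\id_{R(X)} \otimes R_0)$, which is the identity of $R(X)$ by the unitality axiom of the monoidal functor $R$; the right-handed case is symmetric. For~\eqref{eq:Hopf-ope-4}, I would expand $\mathbb{H}^{(\ell)}_{\unitobj, R(X)} = R_2(\unitobj, T(X)) \circ (\id_{R(\unitobj)} \otimes \delta_X)$, precompose with $R(\varepsilon_X)$, and use naturality of $R_2$ to slide $R(\varepsilon_X)$ past it as $\id_{R(\unitobj)} \otimes R(\varepsilon_X)$; the triangle identity $\varepsilon_{T(X)} \circ \delta_X = \id_{T(X)}$ — equivalently $T(\varepsilon_X)\circ\delta_X = \id$ after noting which form appears — then collapses the composite $R(\varepsilon_X) \circ \delta_X$-part to $\id_{T(X)}$, leaving $R_2(\unitobj, X)$; again the right case is the mirror image.

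The main work is~\eqref{eq:Hopf-ope-1}. I would write out both sides as morphisms $R(X) \otimes M \otimes N \to R(X \otimes M \otimes N)$. The right-hand side is $R_2(X \otimes M, N) \circ (\id_{R(X\otimes M)} \otimes \rho_N) \circ \big(R_2(X, M)\circ(\id_{R(X)}\otimes\rho_M)\big) \otimes \id_N$, while the left-hand side involves $R_2(X, M \otimes N)$ composed with $\id_{R(X)} \otimes \rho_{M\otimes N}$, where by~\eqref{eq:def-T-comod-tensor} the coaction $\rho_{M\otimes N}$ equals $T_2(M,N)\circ(\rho_M \otimes \rho_N)$. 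The key step is to invoke the associativity axiom of the monoidal functor $R$ (the coherence relating $R_2(X\otimes M, N)$, $R_2(X, M)$, $R_2(M, N)$ and $R_2(X, M\otimes N)$, with associators suppressed since everything is strict), together with naturality of $R_2$ to move the coactions $\rho_M \otimes \rho_N$ to the appropriate slots; both sides then reduce to $R_3(X, M, N)\circ(\id_{R(X)}\otimes \rho_M \otimes \rho_N)$. The one place to be careful is bookkeeping the order in which $R_2 = T_2$ and the $\rho$'s are composed, so that the naturality squares genuinely line up; a graphical-calculus diagram makes this transparent. Identity~\eqref{eq:Hopf-ope-2} follows by the evident left-right symmetry (formally, by passing to $\mathcal{C}^{\rev}$, under which $\mathbb{H}^{(\ell)}$ and $\mathbb{H}^{(r)}$ are interchanged). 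I expect the only real obstacle to be notational: keeping the tensor factors and the interposed structure morphisms in the correct positions while applying naturality of $R_2$.
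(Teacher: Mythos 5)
Your plan is correct and follows essentially the same route as the paper: unwind the definitions of $\mathbb{H}^{(\ell)}$ and $\mathbb{H}^{(r)}$, reduce \eqref{eq:Hopf-ope-1} (and its mirror \eqref{eq:Hopf-ope-2}) to the associativity constraint of $R_2 = T_2$ so that both sides become $R_3(X,M,N)\circ(\id_{R(X)}\otimes\rho_M\otimes\rho_N)$, get \eqref{eq:Hopf-ope-3} from the unit axiom of a monoidal functor applied to $\unitobj^T=(\unitobj,T_0)$, and get \eqref{eq:Hopf-ope-4} from naturality of $R_2$ plus the comonad counit axiom $T(\varepsilon_X)\circ\delta_X=\id_{T(X)}$ (which indeed holds, as you note). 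The only cosmetic remarks are that \eqref{eq:Hopf-ope-1} needs only the interchange law rather than naturality of $R_2$, and that the two counit identities are separate comonad axioms rather than equivalent statements; neither affects the argument.
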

\begin{proof}
  We prove~\eqref{eq:Hopf-ope-1}. By the definition of $\mathbb{H}^{(\ell)}$, we compute:
  \begin{align*}
    \mathbb{H}^{(\ell)}_{X, \mathbf{M} \otimes \mathbf{N}}
    & = T_2(X, M \otimes N) \circ (\id_{T(X)}^{} \otimes T_2(X,Y)(\rho_M \otimes \rho_N)) \\
    & = T_2(X \otimes M, N) \circ (T_2(X, M) \otimes \id_{T(N)})
      \circ (\id_{T(X)}^{} \otimes \rho_M \otimes \rho_N) \\
    & = \mathbb{H}^{(\ell)}_{X \otimes M, \mathbf{N}}
      \circ (\mathbb{H}^{(\ell)}_{X, \mathbf{M}} \otimes \id_{\mathbf{N}}^{}).
  \end{align*}
  Equation~\eqref{eq:Hopf-ope-2} is proved in a similar way. Equation~\eqref{eq:Hopf-ope-3} follows directly from the definition of monoidal functors. Equation~\eqref{eq:Hopf-ope-4} follows from the naturality of $R$ and the definition of comodules.
\end{proof}

The object $\mathbf{A} := R(\unitobj)$ is an algebra in $\mathcal{C}^T$ with multiplication $m := R_2(\unitobj, \unitobj)$ and unit $u := R_0$. We define the category $\mathcal{C}^T_T$ of {\em right Hopf modules over $T$} to be the category of right $\mathbf{A}$-modules in $\mathcal{C}^T$. By~\eqref{eq:Hopf-ope-1}--\eqref{eq:Hopf-ope-4}, the pair $(\mathbf{A}, \hat{\sigma})$, where
\begin{equation}
  \label{eq:adj-alg-half-br}
  \hat{\sigma}(\mathbf{M})
  = (\mathbb{H}^{(r)}_{\mathbf{M}, \unitobj})^{-1}
  \circ \mathbb{H}^{(\ell)}_{\unitobj, \mathbf{M}}:
  \mathbf{A} \otimes \mathbf{M} \to \mathbf{M} \otimes \mathbf{A}
  \quad (\mathbf{M} \in \mathcal{C}^T),
\end{equation}
is a commutative algebra in $\mathcal{Z}(\mathcal{C}^T)$ \cite[Theorem~6.6]{MR2793022}. Thus a right Hopf module $(\mathbf{M}, \mu: \mathbf{M} \otimes \mathbf{A} \to \mathbf{M})$ in $\mathcal{C}^T$ can be regarded as an $\mathbf{A}$-bimodule by defining the left action $\mu'$ of $\mathbf{A}$ on $\mathbf{M}$ by $\mu' = \mu \circ \hat{\sigma}(\mathbf{M})$.

The category $\mathcal{C}^T_T$ is a monoidal category as a monoidal full subcategory of the category of $\mathbf{A}$-bimodules in $\mathcal{C}^T$. We consider the functor
\begin{equation}
  \label{eq:Hopf-monad-T-FTHM}
  H: \mathcal{C} \to \mathcal{C}^T_T,
  \quad V \mapsto (R(V), R_2(V, \unitobj)).
\end{equation}
The functor $H$ is a monoidal functor with the structure morphisms defined as follows: The morphism $H_0$ is given by $H_0 = \id_{\mathbf{A}}$. To define the natural transformation $H_2$, we first note that the equation
\begin{equation*}
  R_2(\unitobj, V) \circ \hat{\sigma}(R(V)) = R_2(V, \unitobj)
  \quad (V \in \mathcal{C})
\end{equation*}
holds by \eqref{eq:Hopf-ope-4} ({\it cf}. \cite[Theorem~6.6]{MR2793022}). The left-hand side is just the left action of $\mathbf{A}$ on $H(V)$. Thus, for $V, W \in \mathcal{C}$, the tensor product of $H(V)$ and $H(W)$ over $\mathbf{A}$ is given by the coequalizer diagram
\begin{equation*}
  \xymatrix{
    H(V) \otimes \mathbf{A} \otimes H(W)
    \ar@<+.75ex>[rrr]^(.525){R_2(V, \unitobj) \otimes \id_{R(W)}}
    \ar@<-.75ex>[rrr]_(.525){\id_{R(V)} \otimes R_2(\unitobj, W)}
    & & & H(V) \otimes H(W) \ar[r]^(.475){q_{V,W}}
    & H(V) \otimes_{\mathbf{A}} H(W).
  }
\end{equation*}
By the universal property, we define $H_2(V, W): H(V) \otimes_{\mathbf{A}} H(W) \to H(V \otimes W)$ to be the unique morphism satisfying
\begin{equation*}
  H_2(V, W) \circ q_{V,W} = R_2(V, W).
\end{equation*}
By interpreting \cite[Section 6]{MR2793022} in our context, we have the following {\em fundamental theorem for Hopf modules}:

\begin{lemma}
  \label{lem:FTHM}
  If $T$ is conservative, then the monoidal functor $H: \mathcal{C} \to \mathcal{C}^T_T$ defined in the above is an equivalence of $k$-linear monoidal categories.
\end{lemma}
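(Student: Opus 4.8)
The plan is to transport the fundamental theorem for Hopf modules of Brugui\`eres--Lack--Virelizier \cite[Section~6]{MR2793022} to the present comonadic setting, using that a Hopf comonad on $\mathcal{C}$ is precisely a Hopf monad on $\mathcal{C}^{\op}$. Since $H$ has already been endowed with the structure of a monoidal functor and since a monoidal functor whose underlying functor is an equivalence is automatically an equivalence of monoidal categories, it suffices to show that the underlying $k$-linear functor $H \colon \mathcal{C} \to \mathcal{C}^T_T$ is an equivalence of abelian categories. I would do this by constructing a quasi-inverse $L$ together with natural isomorphisms $\id_{\mathcal{C}} \cong L \circ H$ and $H \circ L \cong \id_{\mathcal{C}^T_T}$.

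The functor $L \colon \mathcal{C}^T_T \to \mathcal{C}$ is the \emph{functor of coinvariants}. For a right Hopf module $\mathbf{M} = ((M, \rho_M), \mu)$, with $\rho_M \colon M \to T(M)$ the $T$-comodule structure and $\mu \colon \mathbf{M} \otimes \mathbf{A} \to \mathbf{M}$ the right $\mathbf{A}$-action, one forms an idempotent $P_{\mathbf{M}} \colon M \to M$ built from $\rho_M$, $\mu$, the unit $R_0$ of $\mathbf{A}$, and the inverse Hopf operator $(\mathbb{H}^{(r)}_{\mathbf{M},\unitobj})^{-1}$ --- the last supplying the ``antipode'' that is otherwise absent --- and sets $L(\mathbf{M})$ to be its image; equivalently, $L(\mathbf{M})$ is a suitable equalizer in $\mathcal{C}$ comparing the coaction $\rho_M$ with the ``trivial coaction''. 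Because $\mathcal{C}$ is a finite abelian category and $T$ is $k$-linear and exact, this image exists and $L$ is a well-defined $k$-linear exact functor. Applying $L$ to the free Hopf module $H(V) = (R(V), R_2(V, \unitobj))$ and using the counit axiom of \eqref{eq:def-T-comodule} together with \eqref{eq:Hopf-ope-4} then yields a natural isomorphism $\id_{\mathcal{C}} \xrightarrow{\ \sim\ } L \circ H$, which is the categorical incarnation of the canonical splitting $M \otimes H \cong M^{\mathrm{co}\,H} \otimes H$ of a free Hopf module over an ordinary Hopf algebra.

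It remains to show that the canonical comparison morphism $\theta_{\mathbf{M}} \colon H(L(\mathbf{M})) \to \mathbf{M}$ in $\mathcal{C}^T_T$, built from the inclusion of coinvariants and the action of $\mathbf{A}$, is an isomorphism; this is the step where I expect the real work to lie. Here I would use that $T$ is conservative: the forgetful functor $U \colon \mathcal{C}^T \to \mathcal{C}$, hence also the forgetful functor $\mathcal{C}^T_T \to \mathcal{C}$, reflects isomorphisms, so it is enough to check that $\theta_{\mathbf{M}}$ becomes invertible after applying $U$, where an inverse can be written down explicitly from the inverse Hopf operators $(\mathbb{H}^{(\ell)})^{-1}$ and $(\mathbb{H}^{(r)})^{-1}$ of Lemma~\ref{lem:Hopf-ope-monoidal}, the structure morphisms $R_2$, $R_0$, and the half-braiding $\hat{\sigma}$ of \eqref{eq:adj-alg-half-br}. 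The main obstacle is precisely this verification: one must check, by a direct computation --- most transparently carried out graphically --- that two composites assembled from $\rho_M$, $\mu$, $\hat{\sigma}$ and the Hopf operators are mutually inverse, while keeping careful track of the identifications turning $H(L(\mathbf{M})) = R(L(\mathbf{M}))$ into a right $\mathbf{A}$-module; it is here that the invertibility of $\mathbb{H}^{(\ell)}$ and $\mathbb{H}^{(r)}$, i.e.\ the full Hopf condition, is indispensable. Once $H$ is established as an equivalence of $k$-linear categories, the monoidal structure constructed above upgrades it to an equivalence of $k$-linear monoidal categories, as claimed.
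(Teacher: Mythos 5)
Note first that the paper's own ``proof'' of this lemma is nothing more than the citation: a Hopf comonad on $\mathcal{C}$ is a Hopf monad on $\mathcal{C}^{\op}$, and the hypotheses ($\mathcal{C}$ finite abelian, $T$ $k$-linear, exact, conservative) guarantee the existence/preservation conditions needed to dualize the fundamental theorem of Hopf modules of \cite[Section 6]{MR2793022}. You instead set out to reprove that theorem by hand, which is legitimate in principle, but then your argument must use the hypothesis that $T$ is conservative in an essential way --- and it does not. You invoke conservativity only to conclude that the forgetful functor $\mathcal{C}^T_T \to \mathcal{C}$ reflects isomorphisms; but Eilenberg--Moore forgetful functors (for comonads, and for modules over an algebra) reflect isomorphisms automatically, for \emph{any} $T$, so this use of the hypothesis is vacuous. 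At the same time you claim that $\id_{\mathcal{C}} \cong L \circ H$ follows formally from the counit axiom and \eqref{eq:Hopf-ope-4}, and that $U(\theta_{\mathbf{M}})$ admits an explicit inverse assembled from the Hopf operators. If all of that were correct, $H$ would be an equivalence for every $k$-linear exact Hopf comonad, with no conservativity assumption; this is impossible, since the conclusion itself forces $T$ to be conservative (if $T(f)$ is invertible then $H(f)=R(f)$ is invertible in $\mathcal{C}^T_T$ because the forgetful functor reflects isomorphisms, and an equivalence $H$ then forces $f$ to be invertible). So the hypothesis has to enter somewhere your outline treats as formal.

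The place it must enter is exactly the comparison between $V$ and the ``coinvariants'' $L(H(V))$ of the free Hopf module. Note that a comonad has a counit but no unit: there is no canonical morphism $V \to T(V)$ playing the role of $v \mapsto v \otimes 1$, so you cannot even write down the asserted natural map $\id_{\mathcal{C}} \to L\circ H$ in that direction, let alone prove it invertible by the counit axiom alone; likewise your proposed idempotent $P_{\mathbf{M}}$ is never actually exhibited, and the obvious composite $\mu \circ (\mathbb{H}^{(r)}_{\mathbf{M},\unitobj})^{-1}\circ \rho_M$ is the identity, not a projection onto coinvariants. The correct route (this is how the Brugui\`eres--Lack--Virelizier argument runs, dualized) defines $L$ by a coreflexive equalizer, applies $T$ to obtain a split equalizer, and then uses the exactness \emph{and conservativity} of $T$ to descend the resulting isomorphism --- a Beck-type argument. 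Your sketch defers precisely this computation as ``the main obstacle'' while locating the conservativity hypothesis at a step where it is not needed, so as written the outline cannot be completed into a proof. (A minor additional point: ``a monoidal functor whose underlying functor is an equivalence is an equivalence of monoidal categories'' holds for \emph{strong} monoidal functors; the invertibility of $H_2$ must be checked, or extracted from the cited result, as well.)
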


Here, a functor is said to be {\em conservative} if it reflects isomorphisms. We note that an exact functor between abelian categories is conservative if and only if it is faithful \cite[Lemma 5.7]{2014arXiv1402.3482S}.

\subsection{Hopf bimodules}
\label{subsec:hopf-bimodules}

We keep the notation as in Subsection~\ref{subsec:hopf-modules} and assume moreover that the Hopf comonad $T$ is conservative. We define the category ${}_T^{}\mathcal{C}_T^T$ of {\em Hopf bimodules} over $T$ to be the category of $\mathbf{A}$-bimodules in $\mathcal{C}^T$.

Since the forgetful functor $U: \mathcal{C}^T \to \mathcal{C}$ is strict monoidal, the category $\mathcal{C}$ is a left $\mathcal{C}^T$-module category by $\mathbf{M} \ogreaterthan V = U(\mathbf{M}) \otimes V$ ($\mathbf{M} \in \mathcal{C}^T$, $V \in \mathcal{C}$). Lemma~\ref{lem:Hopf-ope-monoidal} implies that the functors $R: \mathcal{C} \to \mathcal{C}^T$ and $H: \mathcal{C} \to \mathcal{C}^T_T$ are $\mathcal{C}^T$-module functors by the right Hopf operator $\mathbb{H}^{(r)}$. Now let $\mathbf{B}$ be an algebra in $\mathcal{C}^T$. Since $H$ is an equivalence of $\mathcal{C}^T$-module categories by Lemma~\ref{lem:FTHM}, it induces a category equivalence
\begin{equation*}
  H: {}_{\mathbf{B}}\mathcal{C} \to {}_{\mathbf{B}} (\mathcal{C}^T_T) = {}_{\mathbf{B}} (\mathcal{C}^T)_{\mathbf{A}}
  \quad (\text{$:=$ the category of $\mathbf{B}$-$\mathbf{A}$-bimodules
    in $\mathcal{C}^T$}).
\end{equation*}
If $\mathbf{B} = \mathbf{A}$, then ${}_{\mathbf{B}}\mathcal{C}$ is the category of $T(\unitobj)$-modules in $\mathcal{C}$ and ${}_{\mathbf{B}}(\mathcal{C}^T)_{\mathbf{A}}$ is the category of Hopf bimodules over $T$. Thus we have obtained the following {\em fundamental theorem for Hopf bimodules} over a Hopf comonad:

\begin{lemma}
  \label{lem:FTHBM}
  The functor $H$ induces an equivalence ${}_{T(\unitobj)} \mathcal{C} \approx {}_T^{} \mathcal{C}^T_T$ of categories.
\end{lemma}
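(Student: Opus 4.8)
The plan is to deduce the equivalence from the fundamental theorem for Hopf modules (Lemma~\ref{lem:FTHM}) by transporting structure along $H$, exactly as sketched in the paragraph preceding the statement. First I would record that, since $U: \mathcal{C}^T \to \mathcal{C}$ is strict monoidal, $\mathcal{C}$ becomes a left $\mathcal{C}^T$-module category via $\mathbf{M} \ogreaterthan V = U(\mathbf{M}) \otimes V$, and that $H: \mathcal{C} \to \mathcal{C}^T_T$ is a $\mathcal{C}^T$-module functor with structure morphisms given by the right Hopf operator $\mathbb{H}^{(r)}$; this uses \eqref{eq:Hopf-ope-2} and \eqref{eq:Hopf-ope-3} from Lemma~\ref{lem:Hopf-ope-monoidal} to check the module-functor axioms. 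Because $T$ is conservative, Lemma~\ref{lem:FTHM} tells us $H$ is an equivalence of $k$-linear monoidal categories, and the module structure just described upgrades it to an equivalence of left $\mathcal{C}^T$-module categories.

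Next I would apply the standard fact that an equivalence of left $\mathcal{C}^T$-module categories $H: \mathcal{M} \to \mathcal{N}$ induces, for any algebra $\mathbf{B}$ in $\mathcal{C}^T$, an equivalence ${}_{\mathbf{B}}\mathcal{M} \approx {}_{\mathbf{B}}\mathcal{N}$ on categories of $\mathbf{B}$-modules in the respective module categories. Taking $\mathcal{M} = \mathcal{C}$ and $\mathcal{N} = \mathcal{C}^T_T$, one gets an equivalence
\begin{equation*}
  H: {}_{\mathbf{B}}\mathcal{C} \xrightarrow{\ \approx\ } {}_{\mathbf{B}}(\mathcal{C}^T_T).
\end{equation*}
It then remains to identify the two sides when $\mathbf{B} = \mathbf{A} = R(\unitobj)$. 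On the left, a $\mathbf{B}$-module in $\mathcal{C}$ is an object $V$ of $\mathcal{C}$ together with a morphism $U(\mathbf{A}) \otimes V \to V$ satisfying the module axioms; since $U(\mathbf{A}) = U(R(\unitobj)) = T(\unitobj)$ as an algebra in $\mathcal{C}$, this is exactly the category ${}_{T(\unitobj)}\mathcal{C}$ of left $T(\unitobj)$-modules. On the right, $\mathcal{C}^T_T$ is by definition the category of right $\mathbf{A}$-modules in $\mathcal{C}^T$, and adding a compatible left $\mathbf{B} = \mathbf{A}$-action produces precisely the category of $\mathbf{A}$-bimodules in $\mathcal{C}^T$, which is ${}_T^{}\mathcal{C}^T_T$. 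Composing these identifications with the displayed equivalence yields the claim.

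The only genuine subtlety—and the step I would be most careful about—is checking that the left $\mathbf{A}$-action used to form ${}_{\mathbf{B}}(\mathcal{C}^T_T)$ on the right-hand side is compatible with the pre-existing right $\mathbf{A}$-action built into $\mathcal{C}^T_T$, so that one really lands in bimodules rather than some twisted variant. This is where the commutativity of $(\mathbf{A}, \hat{\sigma})$ in $\mathcal{Z}(\mathcal{C}^T)$ (from \cite[Theorem~6.6]{MR2793022}, recalled in Subsection~\ref{subsec:hopf-modules}) is essential: it guarantees that the left action obtained by pre-composing the right action with $\hat{\sigma}$ commutes with the right action, so that the module-theoretic construction on $\mathcal{C}^T_T$ genuinely yields $\mathbf{A}$-bimodules and the notation ${}_{\mathbf{B}}(\mathcal{C}^T)_{\mathbf{A}}$ is unambiguous. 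Everything else is a formal consequence of Lemma~\ref{lem:FTHM} together with the transport of algebras and their module categories along a monoidal module equivalence, so no further computation is needed.
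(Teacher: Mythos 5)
Your argument is correct and follows the paper's own route: make $\mathcal{C}$ a left $\mathcal{C}^T$-module category via $U$, upgrade $H$ to a $\mathcal{C}^T$-module equivalence using the right Hopf operator $\mathbb{H}^{(r)}$ and Lemma~\ref{lem:FTHM}, transport left $\mathbf{B}$-modules along it, and specialize to $\mathbf{B} = \mathbf{A}$ with $U(\mathbf{A}) = T(\unitobj)$. One small correction: the commutativity of $(\mathbf{A},\hat{\sigma})$ is not actually needed for the identification ${}_{\mathbf{A}}(\mathcal{C}^T_T) = {}_{\mathbf{A}}(\mathcal{C}^T)_{\mathbf{A}}$ — a left $\mathbf{A}$-action internal to the module category $\mathcal{C}^T_T$ is by definition a right $\mathbf{A}$-linear map, which already forces the bimodule compatibility; $\hat{\sigma}$ is only used to make $\mathcal{C}^T_T$ monoidal, which plays no role in this lemma.
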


\subsection{The central Hopf comonad}
\label{subsec:central-Hopf-comonad}

Let $\mathcal{C}$ be a finite tensor category. Then there is a $k$-linear faithful exact comonad $Z$ on $\mathcal{C}$ whose category of comodules can be identified with the Drinfeld center of $\mathcal{C}$. The Hopf comonad $Z$, which is called the {\em central Hopf comonad} in \cite{2015arXiv150401178S}, helps us to analyze the Drinfeld center.

We recall the definition of $Z$. For $V \in \mathcal{C}$, we define $Z(V)$ to be the end
\begin{equation*}
  Z(V) = \int_{X \in \mathcal{C}} X \otimes V \otimes X^*,
\end{equation*}
which indeed exists by the argument of \cite[Section 5]{MR1862634}. We denote by
\begin{equation}
  \label{eq:Hopf-comonad-Z-pi}
  \pi_V(X): Z(V) \to X \otimes V \otimes X^* \quad (X \in \mathcal{C})
\end{equation}
the universal dinatural transformation of the end $Z(V)$. By the parameter theorem for ends \cite[IX.7]{MR1712872}, the assignment $V \mapsto Z(V)$ extends to an endofunctor on $\mathcal{C}$ in such a way that $\pi_V(X)$ is natural in the variable $V$.

The functor $Z$ has a structure of a monoidal comonad given as follows: By using the universal property of the end, the monoidal structure
\begin{equation*}
  Z_0: \unitobj \to Z(\unitobj)
  \quad \text{and} \quad
  Z_2(V, W): Z(V) \otimes Z(W) \to Z(V \otimes W)
\end{equation*}
for $V, W \in \mathcal{C}$ are defined to be the unique morphisms such that the equations
\begin{gather}
  \label{eq:Hopf-comonad-Z-Z2}
  \pi_{V \otimes W}(X) \circ Z_2(V, W)
  = (\id_{X \otimes V} \otimes \eval_X \otimes \id_{W \otimes X^*}) \circ (\pi_V(X) \otimes \pi_W(X)), \\
  \label{eq:Hopf-comonad-Z-Z0}
  \pi_{\unitobj}(X) \circ Z_0
  = \coev_X
\end{gather}
hold for all $X \in \mathcal{C}$. To define the comultiplication of $Z$, we note that the tensor product of $\mathcal{C}$ is continuous in each variable. Thus, by the Fubini theorem for ends \cite[IX.8]{MR1712872}, the object $Z Z(V)$ is the end
\begin{equation*}
  Z Z(V) = \int_{X, Y \in \mathcal{C}} X \otimes Y \otimes V \otimes Y^* \otimes X^*
\end{equation*}
with the universal dinatural transformation given by
\begin{equation}
  \label{eq:Hopf-comonad-Z-pi2}
  \pi^{(2)}_{V}(X, Y)
  := (\id_{X} \otimes \pi_V(Y) \otimes \id_{X^*}) \circ \pi_{Z(V)}(X)
\end{equation}
for $X, Y \in \mathcal{C}$. By the universal property, we define $\delta_V: Z(V) \to Z Z(V)$ to be the unique morphism such that the equation
\begin{equation}
  \label{eq:Hopf-comonad-Z-delta}
  \pi_{V}^{(2)}(X, Y) \circ \delta_{V} = \pi_{V}(X \otimes Y)
\end{equation}
holds for all $X, Y \in \mathcal{C}$. Finally, we define $\varepsilon_V: Z(V) \to V$ for $V \in \mathcal{C}$ by
\begin{equation}
  \label{eq:Hopf-comonad-Z-eps}
  \varepsilon_V = \pi_{V}(\unitobj).
\end{equation}

The category $\mathcal{C}^Z$ of $Z$-comodules can be identified with $\mathcal{Z}(\mathcal{C})$ as follows: By the rigidity of $\mathcal{C}$ and basic properties of ends, we have isomorphisms
\begin{equation}
  \label{eq:Hom-V-ZV}
  \begin{aligned}
    \Hom_{\mathcal{C}}(V, Z(V))
    & \textstyle \cong \int_{X \in \mathcal{C}} \Hom_{\mathcal{C}}(V, X \otimes V \otimes X^*) \\
    & \textstyle \cong \int_{X \in \mathcal{C}} \Hom_{\mathcal{C}}(V \otimes X, X \otimes V) \\
    & \cong \Nat(V \otimes (-), (-) \otimes V)
  \end{aligned}
\end{equation}
for $V \in \mathcal{C}$. Let $\rho: V \to Z(V)$ be a morphism in $\mathcal{C}$, and let $\sigma: V \otimes (-) \to (-) \otimes V$ be the natural transformation corresponding to $\rho$ via~\eqref{eq:Hom-V-ZV}. Explicitly,
\begin{equation}
  \label{eq:def-sigma-from-rho}
  \sigma(X) = (\id_X \otimes \id_V \otimes \eval_X) \circ (\pi_V(X) \rho \otimes \id_X)
  \quad (X \in \mathcal{C}).
\end{equation}
One can prove that the pair $(V, \rho)$ is a $Z$-comodule if and only if the pair $(V, \sigma)$ is an object of $\mathcal{Z}(\mathcal{C})$. Hence we obtain a one-to-one correspondence between $\mathrm{Obj}(\mathcal{Z}(\mathcal{C}))$ and $\mathrm{Obj}(\mathcal{C}^Z)$, which, in fact, gives rise to an isomorphism $\mathcal{Z}(\mathcal{C}) \cong \mathcal{C}^Z$ of monoidal categories ({\it cf}. \cite{MR2342829,MR2869176}).

Since $\mathcal{Z}(\mathcal{C})$ is rigid, so is $\mathcal{C}^Z$, and hence $Z$ is a Hopf comonad. Now we define the functors $U$, $R$ and $H$ by \eqref{eq:Hopf-monad-T-forget}, \eqref{eq:Hopf-monad-T-free} and~\eqref{eq:Hopf-monad-T-FTHM} with $T = Z$, respectively. By the results of \cite{2014arXiv1402.3482S} on the adjunction $U \dashv R$, we see that $Z$ is $k$-linear, exact and faithful. Thus, by the fundamental theorem for Hopf modules (Lemma~\ref{lem:FTHM}), the functor $H: \mathcal{C} \to \mathcal{C}^Z_Z$ is an equivalence of monoidal categories. By Lemma~\ref{lem:FTHBM}, we also have an equivalence ${}_A \mathcal{C} \approx {}_Z^{} \mathcal{C}^Z_Z$, where $A = Z(\unitobj)$.

\begin{remark}
  The monoidal structure of $\mathcal{C}^Z_Z$ is defined by using the natural isomorphism $\hat{\sigma}$ given by \eqref{eq:adj-alg-half-br}. In terms not of the central Hopf comonad, but of the half-braiding, the natural isomorphism $\hat{\sigma}$ is given as follows:
  \begin{equation}
    \label{eq:rem-sigma-hat-1}
    \hat{\sigma}(\mathbf{M}) = \sigma_{M}(Z(\unitobj))^{-1}
    \quad (\mathbf{M} = (M, \sigma_{M}) \in \mathcal{Z}(\mathcal{C})).
  \end{equation}
  By the definition of $\hat{\sigma}$, equation~\eqref{eq:rem-sigma-hat-1} is equivalent to
  \begin{equation}
    \label{eq:rem-sigma-hat-2}
    \mathbb{H}^{(\ell)}_{\unitobj, \mathbf{M}} \circ \sigma_{M}(Z(\unitobj))
    = \mathbb{H}^{(r)}_{\mathbf{M}, \unitobj}.
  \end{equation}
  We verify~\eqref{eq:rem-sigma-hat-2} instead of \eqref{eq:rem-sigma-hat-1}. Let $\rho_M: M \to Z(M)$ be the coaction of $Z$ on $M$ corresponding to the half-braiding $\sigma_M$. By \eqref{eq:def-sigma-from-rho}, we have
  \begin{equation*}
    \pi_M(X) \circ \rho_M = (\sigma_M(X) \otimes \id_{X^*}) \circ (\id_M \otimes \coev_X)
  \end{equation*}
  for all $X \in \mathcal{C}$. Thus we compute
  \begin{align*}
    \pi_{M}(X) \circ \mathbb{H}^{(\ell)}_{\unitobj, \mathbf{M}}
    & = \pi_M(X) \circ Z_2(\unitobj, M) \circ (\id_{Z(\unitobj)} \otimes \rho_M) \\
    & = (\id_X \otimes \eval_X \otimes \id_M \otimes \id_{X^*})
      \circ (\pi_{\unitobj}(X) \otimes \pi_{M}(X) \rho_M) \\
    & = (\id_X \otimes \eval_X \otimes \id_M \otimes \id_{X^*}) \\
    & \qquad \circ (\pi_{\unitobj}(X) \otimes \sigma_M(X) \otimes \id_{X^*})
      \circ (\id_{Z(\unitobj)} \otimes \id_M \otimes \coev_X), \\
    \pi_{M}(X) \circ \mathbb{H}^{(r)}_{\mathbf{M}, \unitobj}
    & = \pi_M(X) \circ Z_2(M, \unitobj) \circ (\rho_M \otimes \id_{Z(\unitobj)}) \\
    & = (\id_X \otimes \id_M \otimes \eval_X \otimes \id_{X^*})
      \circ (\pi_{M}(X) \rho_M \otimes \pi_{\unitobj}(X)) \\
    & = (\id_X \otimes \id_M \otimes \eval_X \otimes \id_{X^*}) \\
    & \qquad \circ (\sigma_M(X) \otimes \id_{X^*} \otimes \pi_{\unitobj}(X))
      \circ (\id_M \otimes \coev_X \otimes \id_{Z(\unitobj)})
  \end{align*}
  by~\eqref{eq:Hopf-comonad-Z-Z2} and~\eqref{eq:def-sigma-from-rho}. Now~\eqref{eq:rem-sigma-hat-2} is proved by the universal property of $Z(M)$ and the following graphical calculus:
  \newcommand{\xylabel}[1]{\scriptstyle\mathstrut\smash{#1}}
  \knotholesize{1em}
  \begin{align*}
    \pi_M(X) \circ \mathbb{H}^{(r)}_{\mathbf{M}, \unitobj}
    & = {\xy /r1.25em/:
      (0,1.5)="P1" *+!D{\xylabel{M}},
      "P1"; p-(0,1) **\dir{-} ?> \pushrect{1}{1},
      \vtwist~{s0}{s1}{s2}{s3},
      s2; p-(0,1) **\dir{-} ?>="Q1" *+!U{\xylabel{X}},
      s3; p-(0,1) **\dir{-} ?>="Q2" *+!U{\xylabel{M}},
      s1; p+(1.25,0) \mycap{+.75}
      ?>; p-(0,1) **\dir{-}
      ?>; p+(1.25,0) \mycap{-.75}
      ?>="B1";
      p+(1,0)="B2"; \makecoord{p}{"Q1"}
      **\dir{-} ?> *+!U{\xylabel{X^*}},
      "B1"; "B2" **\dir{} ?(.5)
      *+!D{\makebox[2pc]{$\xylabel{\pi_{\unitobj}(X)}$}} *\frm{-}="BX1",
      "BX1"!U; \makecoord{p}{"P1"} **\dir{-} ?> *+!D{\xylabel{Z(\unitobj)}},
      \endxy}
      = {\xy /r1.25em/:
      (0,1.5)="P1" *+!D{\xylabel{M}},
      "P1"; p-(0,1.5) **\dir{-} ?> \pushrect{1.5}{1},
      \vtwist~{s0}{s1}{s2}{s3},
      s2; p-(0,.5) **\dir{-} ?>="Q1" *+!U{\xylabel{X}},
      s3; p-(0,.5) **\dir{-} ?>="Q2" *+!U{\xylabel{M}},
      s1="B1";
      p+(1,0)="B2"; \makecoord{p}{"Q1"}
      **\dir{-} ?> *+!U{\xylabel{X^*}},
      "B1"; "B2" **\dir{} ?(.5)
      *+!D{\makebox[2pc]{$\xylabel{\pi_{\unitobj}(X)}$}} *\frm{-}="BX1",
      "BX1"!U; \makecoord{p}{"P1"} **\dir{-} ?> *+!D{\xylabel{Z(\unitobj)}},
      \endxy}
      = {\xy /r1.25em/:
      (0,1.5)="P1" \pushrect{1.75}{1.5},
      \vtwist~{s0}{s1}{s2}{s3},
      s0 *+!D{\xylabel{M}},
      s1 *+!D{\xylabel{Z(\unitobj)}},
      s2 *+!U{\makebox[2pc]{$\xylabel{\pi_{\unitobj}(X)}$}} *\frm{-}="BX1",
      "BX1"!D!L(.66); \makecoord{p}{"P1"-(0,3)}="Q1" **\dir{-}
      ?> *+!U{\xylabel{X}},
      "BX1"!D!R(.66)="T1",
      s3 \pushrect{1}{1},
      \vtwist~{s0}{s1}{s2}{s3},
      "T1"; s2 \mycap{-.75},
      s3; \makecoord{p}{"Q1"} **\dir{-}
      ?>  *+!U{\xylabel{M}},
      s1; p+(1.5,0) \mycap{+.75}
      ?>; \makecoord{p}{"Q1"} **\dir{-}
      ?>  *+!U{\xylabel{X^*}},
    \endxy} \\
    & = \pi_{M}(X) \circ \mathbb{H}^{(\ell)}_{\unitobj, \mathbf{M}} \circ \sigma_{M}(Z(\unitobj)).
  \end{align*}
  It is well-known that the algebra $\mathbf{A} := R(\unitobj)$ is a commutative algebra in $\mathcal{Z}(\mathcal{C})$; see, {\it e.g.}, \cite[Lemma 3.5]{MR3039775}. This fact can be proved by \eqref{eq:rem-sigma-hat-1} and a general result on Hopf comonad as follows: Let $\mathbf{c}$ and $\hat{\mathbf{c}}$ be the braiding of $\mathcal{Z}(\mathcal{C})$ and $\mathcal{Z}(\mathcal{Z}(\mathcal{C}))$, respectively. We write $\mathbf{A} = (A, \sigma_A) \in \mathcal{Z}(\mathcal{C})$. By the theory of Hopf comonads, we see that $\hat{\mathbf{A}} := (\mathbf{A}, \hat{\sigma})$ is a commutative algebra in $\mathcal{Z}(\mathcal{Z}(\mathcal{C}))$. Thus, by \eqref{eq:rem-sigma-hat-1},
  \begin{equation*}
    m \circ \mathbf{c}_{\mathbf{A}, \mathbf{A}} = m \circ \sigma_A(A) = m \circ \hat{\sigma}(\mathbf{A})^{-1} = m \circ \hat{\mathbf{c}}_{\hat{\mathbf{A}}, \hat{\mathbf{A}}}^{\,-1} = m.
  \end{equation*}
\end{remark}

\subsection{The adjoint algebra}

Keep the notations as in Subsection~\ref{subsec:central-Hopf-comonad}. We call the algebra $A := U(\mathbf{A})$ the {\em adjoint algebra} of $\mathcal{C}$ as it generalizes the adjoint representation of a Hopf algebra (see \cite{2015arXiv150401178S}). We give some useful results about the algebras $A \in \mathcal{C}$ and $\mathbf{A} \in \mathcal{Z}(\mathcal{C})$.

For an object $V \in \mathcal{C}$, we define
\begin{equation}
  \label{eq:can-act-1}
  \canact_{V} = (\id_V \otimes \eval_V) \circ (\pi_{\unitobj}(V) \otimes \id_V)
  : A \otimes V \to V.
\end{equation}
Then the pair $(V, \canact_{V})$ is a left $A$-module in $\mathcal{C}$ (see \cite{2015arXiv150401178S}), and thus we call $\canact_V$ the {\em canonical action} of $A$ on $V$. This notion is used to establish the following category equivalence:

\begin{lemma}
  \label{lem:adj-alg-mod}
  The functor defined by
  \begin{equation}
    \label{eq:adj-alg-mod-equiv}
    \mathcal{C} \boxtimes \mathcal{C} \to {}_{A} \mathcal{C},
    \quad V \boxtimes W \mapsto (V \otimes W, \canact_V \otimes \id_W)
  \end{equation}
  is an equivalence of categories.
\end{lemma}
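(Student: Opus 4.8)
The plan is to deduce Lemma~\ref{lem:adj-alg-mod} from the fundamental theorem for Hopf bimodules (Lemma~\ref{lem:FTHBM}) together with the identification of $\mathcal{C}^Z_Z$ with $\mathcal{C}\boxtimes\mathcal{C}^{\rev}$ that underlies Lemma~\ref{lem:double-dual-of-E}. Specifically, take $T = Z$ in Subsection~\ref{subsec:hopf-bimodules}. Then $T(\unitobj) = Z(\unitobj) = A$, so Lemma~\ref{lem:FTHBM} gives an equivalence $H\colon {}_{A}\mathcal{C} \xrightarrow{\ \approx\ } {}_{Z}^{}\mathcal{C}^Z_Z$ between left $A$-modules in $\mathcal{C}$ and $\mathbf{A}$-bimodules in $\mathcal{C}^Z = \mathcal{Z}(\mathcal{C})$. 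So it suffices to identify the category of $\mathbf{A}$-bimodules in $\mathcal{Z}(\mathcal{C})$ with $\mathcal{C}\boxtimes\mathcal{C}$, compatibly with the stated functor.

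The next step is to recall that, since $\mathbf{A}$ is a commutative algebra in $\mathcal{Z}(\mathcal{C})$ (noted just after Proposition~\ref{prop:Z-sigma-hat}), the category of $\mathbf{A}$-bimodules in $\mathcal{Z}(\mathcal{C})$ coincides with $\mathcal{C}^Z_Z = H(\mathcal{C})$ as a category — more precisely, by Lemma~\ref{lem:FTHM}, $H\colon\mathcal{C}\to\mathcal{C}^Z_Z$ is already an equivalence of monoidal categories, so ${}_{Z}^{}\mathcal{C}^Z_Z$, being $\mathbf{A}$-bimodules in $\mathcal{Z}(\mathcal{C})$, is $\mathbf{A}$-bimodules in the monoidal category $\mathcal{C}^Z_Z$; but $\mathbf{A}$ is the unit object of $\mathcal{C}^Z_Z$, hence ${}_{Z}^{}\mathcal{C}^Z_Z \approx \mathcal{C}^Z_Z \approx \mathcal{C}$. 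Composing, one gets ${}_A\mathcal{C}\approx\mathcal{C}$. However the lemma as stated has source $\mathcal{C}\boxtimes\mathcal{C}$, not $\mathcal{C}$, so a more careful accounting is needed: one must track the left $\mathcal{C}^Z$-module (equivalently right $\mathcal{C}^Z$-module, via the braiding) structures on both sides and use Lemma~\ref{lem:double-dual-of-E}, which says $\mathcal{C}\boxtimes\mathcal{C}^{\rev}\approx\End_{\mathcal{Z}(\mathcal{C})}(\mathcal{C})$, to convert the bimodule data into a pair of objects of $\mathcal{C}$. Concretely, I would first show that the functor $V\boxtimes W\mapsto (V\otimes W,\varrho_V\otimes\id_W)$ lands in ${}_A\mathcal{C}$ — this is a direct check using the formula~\eqref{eq:can-act-2} for $\varrho$ and the axioms of $\mathcal{Z}(\mathcal{C})$ — and then exhibit its inverse by sending a module $(M,\mu)$ to the pair obtained by applying $H$ and using the equivalence $\End_{\mathcal{Z}(\mathcal{C})}(\mathcal{C})\approx\mathcal{C}\boxtimes\mathcal{C}^{\rev}$ of Lemma~\ref{lem:double-dual-of-E}; the $\mathbf{A}$-bimodule $H(M)$ corresponds to an $\mathcal{E}$-module endofunctor of $\mathcal{C}$ of the form $F_{V,W}=V\otimes(-)\otimes W$, and one reads off $V$ and $W$.

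The main obstacle, and the step requiring the most care, is matching the two module structures precisely so that the canonical action $\varrho_V$ coming from $\pi_{\unitobj}(V)$ on the $\mathcal{C}\boxtimes\mathcal{C}$ side corresponds, under $H$ and Lemma~\ref{lem:double-dual-of-E}, to the intrinsic left $\mathbf{A}$-action on a Hopf module. This is where the identities of Lemma~\ref{lem:Hopf-ope-monoidal} (particularly \eqref{eq:Hopf-ope-4}) and Proposition~\ref{prop:Z-sigma-hat} — expressing $\hat\sigma(\mathbf{M})$ as $\sigma_M(Z(\unitobj))^{-1}$ — enter: the left action on $H(V)$ is $R_2(\unitobj,V)\circ\hat\sigma(R(V))$, and I must check that transporting this along the forgetful/universal-dinatural machinery reproduces exactly formula~\eqref{eq:can-act-1}. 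Once the functors and their structure morphisms are matched on the nose, fully faithfulness and essential surjectivity are inherited from the already-established equivalences (Lemmas~\ref{lem:FTHM}, \ref{lem:FTHBM}, \ref{lem:double-dual-of-E}), so no extra categorical input is needed; the real work is entirely in the bookkeeping of structure morphisms, most efficiently done in the graphical calculus.
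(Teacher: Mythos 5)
Your overall skeleton (reduce via Lemma~\ref{lem:FTHBM} to identifying $\mathbf{A}$-bimodules in $\mathcal{Z}(\mathcal{C})$ with $\mathcal{C}\boxtimes\mathcal{C}$, then match the action $\varrho_V\otimes\id_W$) is the paper's strategy, but your middle step is genuinely wrong, not merely in need of ``more careful accounting.'' Commutativity of $\hat{\mathbf{A}}$ does not make $\mathbf{A}$-bimodules in $\mathcal{Z}(\mathcal{C})$ the same thing as right $\mathbf{A}$-modules: a bimodule carries independent left and right actions, and only those bimodules whose left action is the right action twisted by the half-braiding $\hat{\sigma}$ form the subcategory $\mathcal{C}^Z_Z\approx\mathcal{C}$ (this is exactly how the paper embeds $\mathcal{C}^Z_Z$ into the bimodule category). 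The situation is the same as for an ordinary commutative ring $R$, where $R$-bimodules are $R\otimes R$-modules, not $R$-modules. Accordingly ${}_{\mathbf{A}}\mathcal{Z}(\mathcal{C})_{\mathbf{A}}$ is equivalent to $\mathcal{C}\boxtimes\mathcal{C}$, not to $\mathcal{C}$, and your intermediate conclusion ``${}_A\mathcal{C}\approx\mathcal{C}$'' flatly contradicts the lemma being proved; that contradiction should have been taken as a refutation of the step rather than a prompt to press on.

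The second gap is that your repair sketch asserts, but does not supply, the mechanism that converts a general $\mathbf{A}$-bimodule into a pair of objects of $\mathcal{C}$. Lemma~\ref{lem:double-dual-of-E} only parametrizes left $\mathcal{Z}(\mathcal{C})$-module endofunctors of $\mathcal{C}$ by $\mathcal{C}\boxtimes\mathcal{C}^{\rev}$; to say that ``$H(M)$ corresponds to an endofunctor of the form $F_{V,W}$'' you first need the equivalence ${}_{\mathbf{A}}\mathcal{Z}(\mathcal{C})_{\mathbf{A}}\approx\End_{\mathcal{Z}(\mathcal{C})}(\mathcal{Z}(\mathcal{C})_{\mathbf{A}})$, sending a bimodule $N$ to $(-)\otimes_{\mathbf{A}}N$ (equivalently $F\mapsto F(\mathbf{A})$; this is \cite[Proposition 7.11.1]{MR3242743}, the paper's $\mathcal{E}_3$), and the fact that $H$ is an equivalence of left $\mathcal{Z}(\mathcal{C})$-module categories, so that conjugation by $H$ identifies $\End_{\mathcal{Z}(\mathcal{C})}(\mathcal{Z}(\mathcal{C})_{\mathbf{A}})$ with $\End_{\mathcal{Z}(\mathcal{C})}(\mathcal{C})$ (the paper's $\mathcal{E}_2$). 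These two links are precisely the nontrivial content you elide. Your final step --- verifying via \eqref{eq:can-act-2}, \eqref{eq:Hopf-ope-4} and Proposition~\ref{prop:Z-sigma-hat} that the composite equivalence carries $V\boxtimes W$ to $(V\otimes W,\varrho_V\otimes\id_W)$ --- is correctly identified and is indeed the computation the paper carries out, but without the two missing links the chain from $\mathcal{C}\boxtimes\mathcal{C}$ to ${}_A\mathcal{C}$ is not established.
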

\begin{proof}
  It is known that the coend $L = \int^{X \in \mathcal{C}} X \otimes X^*$ has a canonical structure of a coalgebra in $\mathcal{C}$. As explained in \cite{2015arXiv150401178S}, the algebra $A$ is dual to the coalgebra $L$. The category of $L$-comodules in $\mathcal{C}$ has been studied by Lyubashenko in \cite{MR1625495}. This lemma is proved by rephrasing \cite[Corollary 2.7.2]{MR1625495}.
\end{proof}

\begin{remark}
  This lemma can also be proved by using basic results on exact module categories established in \cite{MR2119143}. We regard $\mathcal{C}$ as a left $\mathcal{Z}(\mathcal{C})$-module category through the forgetful functor $U: \mathcal{Z}(\mathcal{C}) \to \mathcal{C}$. Then, since $\mathcal{C} \approx \mathcal{Z}(\mathcal{C})_{\mathbf{A}}$ as left $\mathcal{Z}(\mathcal{C})$-module categories, we have equivalences
  \begin{equation*}
    {}_{\mathbf{A}}\mathcal{Z}(\mathcal{C})_{\mathbf{A}}
    \approx \text{(the category of $k$-linear $\mathcal{Z}(\mathcal{C})$-module functors $\mathcal{C} \to \mathcal{C}$)}
    \approx \mathcal{C} \boxtimes \mathcal{C}
  \end{equation*}
  of categories \cite{MR2119143}. Since $\mathcal{C}_A \approx {}_Z^{} \mathcal{C}_Z^Z = {}_{\mathbf{A}}\mathcal{Z}(\mathcal{C})_{\mathbf{A}} $ by the fundamental theorem of Hopf bimodules, we have $\mathcal{C}_A \approx \mathcal{C} \boxtimes \mathcal{C}$. The resulting equivalence is in fact given as stated in the above lemma.
\end{remark}

Given an invertible object $\beta \in \mathcal{C}$, we define the {\em character} of $\beta$ by
\begin{equation}
  \label{eq:def-character}
  \intch_{\beta} := \coev_{\beta}^{-1} \circ \pi_{\unitobj}(\beta) \in \Hom_{\mathcal{C}}(A, \unitobj)
\end{equation}
({\it cf}. \cite{2015arXiv150401178S}). By the dinaturality of $\pi_{\unitobj}: A \to X \otimes X^*$, we see that the character of $\beta$ depends only on the isomorphism class of $\beta$. Moreover, the following lemma shows that the character of an invertible object is multiplicative:

\begin{lemma}
  $\intch_{\beta}: A \to \unitobj$ is a morphism of algebras in $\mathcal{C}$.
\end{lemma}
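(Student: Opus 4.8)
The plan is to verify directly that $\mathrm{ch}_{\beta}$ is compatible with the multiplication $m = R_2(\unitobj,\unitobj)$ and the unit $u = R_0$ of $A$, which (upon unfolding $U$) means checking
\begin{equation*}
  \mathrm{ch}_{\beta} \circ m = (\mathrm{ch}_{\beta} \otimes \mathrm{ch}_{\beta})
  \quad \text{and} \quad
  \mathrm{ch}_{\beta} \circ u = \id_{\unitobj}.
\end{equation*}
The key to making this manageable is that $A = U(\mathbf{A}) = Z(\unitobj)$ satisfies the universal property of the end $\int_{X} X \otimes X^*$, so it suffices to postcompose each side with the universal dinatural $\pi_{\unitobj}(X)$ for every $X \in \mathcal{C}$ and compare. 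I would carry out the multiplicativity check as follows: first rewrite $\mathrm{ch}_{\beta}$ using its definition \eqref{eq:def-character} as $\coev_{\beta}^{-1} \circ \pi_{\unitobj}(\beta)$; then compose $\mathrm{ch}_{\beta} \circ m$ with nothing further (since $m$ lands in $A$ already) and use the explicit formula for $m = Z_2(\unitobj,\unitobj)$, namely \eqref{eq:Hopf-comonad-Z-Z2} with $V = W = \unitobj$, which after applying $\pi_{\unitobj}(\beta)$ expresses $\pi_{\unitobj}(\beta) \circ m$ in terms of $(\id_{\beta} \otimes \eval_{\beta} \otimes \id_{\beta^*}) \circ (\pi_{\unitobj}(\beta) \otimes \pi_{\unitobj}(\beta))$. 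Precomposing with $\coev_{\beta}^{-1}$ on the outside and using that $\beta$ is invertible (so $\coev_{\beta}$ and $\eval_{\beta}$ are isomorphisms) one collapses this to $(\coev_{\beta}^{-1} \circ \pi_{\unitobj}(\beta)) \otimes (\coev_{\beta}^{-1} \circ \pi_{\unitobj}(\beta)) = \mathrm{ch}_{\beta} \otimes \mathrm{ch}_{\beta}$, which is exactly the claim; the bookkeeping here is a short graphical-calculus computation (a "zig-zag" identity for $\beta$) rather than anything deep.

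For the unit axiom, I would compose $\mathrm{ch}_{\beta} \circ u$ with the definition $u = R_0 = Z_0$ and apply \eqref{eq:Hopf-comonad-Z-Z0}, which gives $\pi_{\unitobj}(\beta) \circ Z_0 = \coev_{\beta}$; hence $\mathrm{ch}_{\beta} \circ u = \coev_{\beta}^{-1} \circ \coev_{\beta} = \id_{\unitobj}$. This is immediate.

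The only genuine subtlety, and the step I expect to require the most care, is keeping the strictness conventions straight: the excerpt assumes $(-)^*$ is a strict monoidal functor and identifies $X \otimes V \otimes X^*$ with $(X \otimes V) \otimes X^*$ etc., so the various associator-free manipulations of $\eval$ and $\coev$ for $\beta$ — in particular the interplay between $\eval_{\beta}$, $\coev_{\beta}$, and the canonical iso $\beta \otimes \beta^* \cong \unitobj$ coming from invertibility — must be applied consistently. Concretely, the heart of the multiplicativity computation is the identity $(\id_{\beta} \otimes \eval_{\beta} \otimes \id_{\beta^*}) \circ (\coev_{\beta} \otimes \coev_{\beta}) = \coev_{\beta}$, which is just a triangle/zig-zag identity once one remembers that for invertible $\beta$ the evaluation is invertible; after recording this, the proof is a direct diagram chase. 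I would present it in the graphical calculus set up in Subsection on graphical calculus, since the equalities are most transparent there.
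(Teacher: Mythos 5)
Your argument is correct and is essentially the computation underlying the paper's own proof: both rest on \eqref{eq:Hopf-comonad-Z-Z2} and \eqref{eq:Hopf-comonad-Z-Z0} (giving $\pi_{\unitobj}(\beta)\circ m$ and $\pi_{\unitobj}(\beta)\circ u$) together with the zig-zag identity $(\id_{\beta}\otimes\eval_{\beta}\otimes\id_{\beta^*})\circ(\coev_{\beta}\otimes\coev_{\beta})=\coev_{\beta}$, which you correctly isolate (and which actually needs no invertibility of $\beta$ --- invertibility only enters so that $\coev_{\beta}^{-1}$ exists). The paper simply packages these same facts as a factorization of $\mathrm{ch}_{\beta}=\coev_{\beta}^{-1}\circ\pi_{\unitobj}(\beta)$ into algebra morphisms $A\to\beta\otimes\beta^*\to\unitobj$, where $\beta\otimes\beta^*$ carries the multiplication $\id_{\beta}\otimes\eval_{\beta}\otimes\id_{\beta^*}$; your appeal to the universal property of the end is harmless but unnecessary, since the morphisms you compare have target $\unitobj$ rather than $Z(\unitobj)$.
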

\begin{proof}
  An object of the form $X \otimes X^*$ for some $X \in \mathcal{C}$ is an algebra in $\mathcal{C}$ with multiplication $\id_X \otimes \eval_X \otimes \id_{X^*}$ and unit $\coev_X$. It is easy to see that $\pi_{\unitobj}(X)$ and $\coev_X$ are morphisms of algebras in $\mathcal{C}$. Thus $\intch_{\beta}$ is a morphism of algebras in $\mathcal{C}$ as the composition of such morphisms.
\end{proof}

For two algebras $P$ and $Q$ in $\mathcal{C}$, we denote by $\Alg_{\mathcal{C}}(P, Q)$ the set of algebra morphisms from $P$ to $Q$. We also denote by $\Inv(\mathcal{C})$ the set of isomorphism classes of invertible objects of $\mathcal{C}$. By the above argument, we obtain the map
\begin{equation}
  \label{eq:character-map}
  \intch: \Inv(\mathcal{C}) \to \Alg_{\mathcal{C}}(A, \unitobj),
  \quad [\beta] \mapsto \intch_{\beta}.
\end{equation}

\begin{lemma}
  \label{lem:Alg-C-A-1}
  This map is bijective.
\end{lemma}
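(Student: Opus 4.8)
The plan is to construct an explicit inverse to the map $\mathrm{ch}$ of~\eqref{eq:character-map}, using Lemma~\ref{lem:adj-alg-mod} to translate algebra maps $A \to \unitobj$ into categorical data. Given an algebra morphism $\chi \colon A \to \unitobj$ in $\mathcal{C}$, the object $\unitobj$ becomes a left $A$-module via $\chi$. Under the equivalence $\mathcal{C} \boxtimes \mathcal{C} \to {}_A\mathcal{C}$ of Lemma~\ref{lem:adj-alg-mod}, this $A$-module $(\unitobj, \chi)$ corresponds to some object of $\mathcal{C} \boxtimes \mathcal{C}$; since $\unitobj \in \mathcal{C}$ is simple and the equivalence is $k$-linear and exact, the corresponding object must be of the form $\beta \boxtimes \gamma$ for simple objects $\beta, \gamma \in \mathcal{C}$ with $\beta \otimes \gamma \cong \unitobj$. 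In particular $\beta$ is invertible and $\gamma \cong \beta^*$. This assigns to each $\chi \in \mathrm{Alg}_{\mathcal{C}}(A, \unitobj)$ an element $[\beta] \in \Inv(\mathcal{C})$, and I claim $\chi \mapsto [\beta]$ is inverse to $\mathrm{ch}$.

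First I would check that this assignment is well-defined, i.e.\ depends only on $\chi$ and not on choices: the $A$-module $(\unitobj, \chi)$ determines $\beta \boxtimes \beta^*$ up to isomorphism in $\mathcal{C} \boxtimes \mathcal{C}$, hence determines $[\beta] \in \Inv(\mathcal{C})$ uniquely (if $\beta \boxtimes \beta^* \cong \beta' \boxtimes (\beta')^*$ then $\beta \cong \beta'$). Next I would verify the two composites are identities. For one direction: starting from an invertible $\beta$, the $A$-module $(\unitobj, \mathrm{ch}_\beta)$ should correspond to $\beta \boxtimes \beta^*$ under Lemma~\ref{lem:adj-alg-mod}; equivalently, the canonical action $\varrho$ on $\beta \otimes \beta^*$ restricted appropriately recovers $\mathrm{ch}_\beta \otimes \id$, which one reads off from the formulas~\eqref{eq:can-act-1} and~\eqref{eq:def-character} together with the fact, used in the previous lemma, that $\pi_{\unitobj}(X) \colon A \to X \otimes X^*$ and $\coev_X$ are algebra maps. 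Concretely, $(\beta \boxtimes \beta^*)$ maps to $(\beta \otimes \beta^*, \varrho_\beta \otimes \id_{\beta^*})$, and one computes that the $A$-module $(\unitobj, \mathrm{ch}_\beta)$ is obtained from this by the internal-Hom / coevaluation adjunction, pinning down that the image of $(\unitobj, \mathrm{ch}_\beta)$ is indeed $\beta \boxtimes \beta^*$. For the other direction, one observes that $(\unitobj, \chi) \mapsto \beta \boxtimes \beta^* \mapsto (\unitobj, \mathrm{ch}_\beta)$ forces $\chi = \mathrm{ch}_\beta$ because both are algebra maps $A \to \unitobj$ inducing isomorphic $A$-module structures on the simple object $\unitobj$, and $\Hom_{{}_A\mathcal{C}}$-faithfulness of the forgetful functor makes the module structure determine $\chi$.

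Alternatively — and this may be cleaner to write — I would prove injectivity and surjectivity of $\mathrm{ch}$ separately. \emph{Injectivity}: if $\mathrm{ch}_\beta = \mathrm{ch}_{\beta'}$, then $(\unitobj, \mathrm{ch}_\beta)$ and $(\unitobj, \mathrm{ch}_{\beta'})$ are the same $A$-module, hence correspond to the same object $\beta \boxtimes \beta^* \cong \beta' \boxtimes (\beta')^*$ of $\mathcal{C}\boxtimes\mathcal{C}$, giving $\beta \cong \beta'$. \emph{Surjectivity}: given $\chi \in \mathrm{Alg}_{\mathcal{C}}(A,\unitobj)$, the $A$-module $(\unitobj,\chi)$ corresponds under Lemma~\ref{lem:adj-alg-mod} to some $M \boxtimes N$; simplicity of $\unitobj$ and exactness of the equivalence give that $M, N$ are simple with $M \otimes N \cong \unitobj$, so $M =: \beta$ is invertible; then tracing the equivalence back shows the algebra map inducing this module structure is $\mathrm{ch}_\beta$, so $\chi = \mathrm{ch}_\beta$. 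The main obstacle I anticipate is the bookkeeping in this last identification: one must match the $A$-module structure coming through the chain of equivalences $\mathcal{E}_4\mathcal{E}_3\mathcal{E}_2\mathcal{E}_1$ with the explicit formula~\eqref{eq:can-act-1} for $\varrho$ and with~\eqref{eq:def-character} for $\mathrm{ch}_\beta$, and in particular confirm that the module $\beta \boxtimes \beta^* = (\beta \otimes \beta^*, \varrho_\beta \otimes \id_{\beta^*})$ has a submodule isomorphic to $(\unitobj, \mathrm{ch}_\beta)$ via $\coev_\beta$; this is a diagram-chase using that $\pi_{\unitobj}(\beta)$ is an algebra map and that $\eval_\beta, \coev_\beta$ are isomorphisms, exactly the ingredients marshalled in the preceding lemmas.
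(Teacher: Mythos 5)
Your proposal is correct and follows essentially the same route as the paper: reduce to the equivalence of Lemma~\ref{lem:adj-alg-mod}, use that the simple $A$-module $(\unitobj,\chi)$ corresponds to $\beta\boxtimes\beta^*$ for an invertible $\beta$, and identify $\chi$ with $\mathrm{ch}_\beta$ via the module map $\coev_\beta$. The only (harmless) difference is that you verify directly that $\coev_\beta$ intertwines $(\unitobj,\mathrm{ch}_\beta)$ with $(\beta\otimes\beta^*,\varrho_\beta\otimes\id_{\beta^*})$, whereas the paper invokes the one-dimensionality of $\Hom_{\mathcal{C}}(\unitobj,\beta\otimes\beta^*)$ to force the isomorphism to be $\coev_\beta$ and then reads off $\chi=\mathrm{ch}_\beta$ from the same zig-zag computation.
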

\begin{proof}
  We first prove that the map~\eqref{eq:character-map} is surjective. Let $\chi: A \to \unitobj$ be a morphism of algebras in $\mathcal{C}$. Since $\unitobj \in \mathcal{C}$ has no proper subobject, $(\unitobj, \chi)$ is a simple object of ${}_A \mathcal{C}$. By the representation theory of finite-dimensional algebras, a simple object of $\mathcal{C} \boxtimes \mathcal{C}$ is of the form $V \boxtimes W$ for some simple objects $V$ and $W$ of $\mathcal{C}$. Thus, by Lemma~\ref{lem:adj-alg-mod}, there are simple objects $V$ and $W$ of $\mathcal{C}$ such that
  \begin{equation*}
    (\unitobj, \chi) \cong (V \otimes W, \canact_{V} \otimes \id_{W})
  \end{equation*}
  as left $A$-modules. In particular, $V \otimes W \cong \unitobj$. Thus we may assume that $\beta := V$ is an invertible object and $W = \beta^*$. Since $\Hom_{\mathcal{C}}(\unitobj, \beta \otimes \beta^*)$ is the one-dimensional vector space spanned by $\coev_{\beta}$, we see that
  \begin{equation*}
    \coev_{\beta}: (\unitobj, \chi) \to (\beta \otimes \beta^*, \canact_{\beta} \otimes \id_{\beta^*})
  \end{equation*}
  must be a morphism of left $A$-modules. This means that the equation
  \begin{equation*}
    \coev_{\beta} \circ \chi
    = (\canact_{\beta} \otimes \id_{\beta^*}) \circ (\id_{A} \otimes \coev_{\beta})
  \end{equation*}
  holds. Therefore $\chi = \intch_{\beta}$. This proves the surjectivity. For the later discussion, we also note that the above argument shows that $(\unitobj, \intch_{\beta})$ corresponds to $\beta \boxtimes \beta^*$ via the equivalence ${}_{A}\mathcal{C} \approx \mathcal{C} \boxtimes \mathcal{C}$ given in Lemma~\ref{lem:adj-alg-mod} 

  Now we check the injectivity of \eqref{eq:character-map}. Let $\alpha$ and $\beta$ be invertible objects of $\mathcal{C}$ such that $\intch_{\alpha} = \intch_{\beta}$. Then $(\unitobj, \intch_{\alpha}) = (\unitobj, \intch_{\beta})$ as left $A$-modules. By Lemma~\ref{lem:adj-alg-mod} and the above argument, we have $\alpha \boxtimes \alpha^* \cong \beta \boxtimes \beta^*$ in $\mathcal{C} \boxtimes \mathcal{C}$. Since
  \begin{equation*}
    \Hom_{\mathcal{C} \boxtimes \mathcal{C}}(\alpha \boxtimes \alpha^*, \beta \boxtimes \beta^*)
    \cong \Hom_{\mathcal{C}}(\alpha, \beta) \otimes_k \Hom_{\mathcal{C}}(\alpha^*, \beta^*),
  \end{equation*}
  we have $\alpha \cong \beta$. Hence the map \eqref{eq:character-map} is injective.
\end{proof}

The vector space $\mathrm{CF}(\mathcal{C}) := \Hom_{\mathcal{C}}(A, \unitobj)$ is called the {\em space of class functions} in \cite{2015arXiv150401178S} as it generalizes the space of class functions on a finite group. This is an algebra over $k$ with respect to the convolution product
\begin{equation*}
  f \star g = f \circ Z(g) \circ \delta_{\unitobj}
  \quad (f, g \in \mathrm{CF}(\mathcal{C})),
\end{equation*}
and the adjunction isomorphism
\begin{equation*}
  \mathrm{CF}(\mathcal{C})
  \to \Hom_{\mathcal{Z}(\mathcal{C})}(\mathbf{A}, \mathbf{A}),
  \quad f \mapsto \widetilde{f} := Z(f) \circ \delta_{\unitobj}
\end{equation*}
is in fact an isomorphism of algebras over $k$ \cite{2015arXiv150401178S}.

\begin{lemma}
  \label{lem:Alg-C-A-1-Alg-ZC-A-A}
  The above map restricts to the bijection
  \begin{equation*}
    \Alg_{\mathcal{C}}(A, \unitobj)
    \to \Alg_{\mathcal{Z}(\mathcal{C})}(\mathbf{A}, \mathbf{A}),
    \quad f \mapsto \widetilde{f}.
  \end{equation*}
\end{lemma}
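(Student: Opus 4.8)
The plan is to observe that $f \mapsto \widetilde{f}$ is precisely the adjunction bijection $\Hom_{\mathcal{C}}(U(\mathbf{A}), \unitobj) \xrightarrow{\ \sim\ } \Hom_{\mathcal{Z}(\mathcal{C})}(\mathbf{A}, R(\unitobj)) = \Hom_{\mathcal{Z}(\mathcal{C})}(\mathbf{A}, \mathbf{A})$ of the adjunction $U \dashv R$ attached to the central Hopf comonad $Z$, and to exploit that this is a \emph{Hopf} adjunction. Concretely, $U$ is strict monoidal, $R$ is monoidal, and---since $Z$ is a Hopf comonad---the unit $\eta \colon \id_{\mathcal{Z}(\mathcal{C})} \to RU$ and the counit $\varepsilon \colon UR \to \id_{\mathcal{C}}$ of $U \dashv R$ are monoidal natural transformations. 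Each of these four data sends algebras to algebras and algebra morphisms to algebra morphisms (recall that a monoidal natural transformation $\theta \colon F \to G$ gives, for every algebra $B$, an algebra morphism $\theta_B \colon F(B) \to G(B)$ between the induced algebra structures), so the statement will follow by writing $f \mapsto \widetilde{f}$ and its inverse as composites of such maps.

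First I would make the two adjunction maps explicit. Since $\mathbf{A} = R(\unitobj) = (Z(\unitobj), \delta_{\unitobj})$, the unit of the Eilenberg--Moore adjunction of the comonad $Z$ at $\mathbf{A}$ is the coaction $\eta_{\mathbf{A}} = \delta_{\unitobj} \colon \mathbf{A} \to RU(\mathbf{A}) = R(A)$, and the counit at $\unitobj \in \mathcal{C}$ is $\varepsilon_{\unitobj} = \pi_{\unitobj}(\unitobj) \colon UR(\unitobj) = A \to \unitobj$. Hence $\widetilde{f}$, viewed as a morphism in $\mathcal{Z}(\mathcal{C})$, equals $R(f) \circ \delta_{\unitobj}$, and the inverse of $f \mapsto \widetilde{f}$ carries $\varphi \in \Hom_{\mathcal{Z}(\mathcal{C})}(\mathbf{A}, \mathbf{A})$ to $\varepsilon_{\unitobj} \circ U(\varphi)$. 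I would also record the routine identifications of algebra structures: the algebra $\mathbf{A}$ of Subsection~\ref{subsec:hopf-modules} is exactly the image of the trivial algebra $\unitobj \in \mathcal{C}$ under the monoidal functor $R$; since $U$ is strict monoidal and sends the algebra $\mathbf{A}$ to the algebra $A = U(\mathbf{A})$, the algebra structure induced by $RU$ on $R(A)$ coincides with the one induced by $R$ from the algebra $A \in \mathcal{C}$; and the algebra from which $\varepsilon_{\unitobj}$ departs, namely the $Z$-induced structure on $Z(\unitobj)$, is the defining structure of $A$.

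With these in hand, the two implications are immediate. If $f \in \mathrm{Alg}_{\mathcal{C}}(A, \unitobj)$, then $\delta_{\unitobj} = \eta_{\mathbf{A}}$ is an algebra morphism $\mathbf{A} \to R(A)$ (as $\eta$ is monoidal natural) and $R(f) \colon R(A) \to R(\unitobj) = \mathbf{A}$ is an algebra morphism (as $R$ is monoidal and $f$ an algebra morphism), so $\widetilde{f} = R(f) \circ \delta_{\unitobj} \in \mathrm{Alg}_{\mathcal{Z}(\mathcal{C})}(\mathbf{A}, \mathbf{A})$; and if $\varphi \in \mathrm{Alg}_{\mathcal{Z}(\mathcal{C})}(\mathbf{A}, \mathbf{A})$, then $U(\varphi) \colon A \to A$ is an algebra morphism (as $U$ is strict monoidal) and $\varepsilon_{\unitobj}$ is an algebra morphism (as $\varepsilon$ is monoidal natural), whence $\varepsilon_{\unitobj} \circ U(\varphi) \in \mathrm{Alg}_{\mathcal{C}}(A, \unitobj)$. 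Since $f \mapsto \widetilde{f}$ is already a bijection of the full $\Hom$-sets and we have shown it maps the $\mathrm{Alg}$-subsets into each other in both directions, it restricts to the asserted bijection. The only step requiring care is the bookkeeping of the induced algebra structures indicated above; I do not expect any genuine difficulty there, and I regard that verification as the sole (minor) obstacle.
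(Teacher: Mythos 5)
Your proposal is correct, and it reaches the conclusion by a genuinely more structural route than the paper. The paper proves the lemma by direct computation: writing $\widetilde{f} = Z(f) \circ \delta_{\unitobj}$, it verifies $m \circ (\widetilde{f} \otimes \widetilde{f}) = \widetilde{f} \circ m$ and $\widetilde{f} \circ u = u$ by a chain of equalities using the naturality of $Z_2$, the hypothesis that $f$ is an algebra morphism, and the monoidality of $\delta$; conversely it recovers $f = \varepsilon_{\unitobj} \circ \widetilde{f}$ (pre-composing with $u$, post-composing with $\varepsilon_{\unitobj}$) and uses the monoidality of $\varepsilon$. You instead factor the adjunction bijection as $f \mapsto R(f) \circ \eta_{\mathbf{A}}$ with inverse $\varphi \mapsto \varepsilon_{\unitobj} \circ U(\varphi)$, identify $\eta_{\mathbf{A}} = \delta_{\unitobj}$ and $\varepsilon_{\unitobj} = \pi_{\unitobj}(\unitobj)$, and invoke the general transport facts that monoidal functors ($U$ strict, $R$ lax with $R_2 = Z_2$, $R_0 = Z_0$) carry algebras and algebra morphisms to algebras and algebra morphisms, and that a monoidal natural transformation evaluated at an algebra is an algebra morphism — the last being available because the paper has already recorded (via the Hopf adjunction property of $U \dashv R$) that the unit and counit are monoidal. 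The bookkeeping you flag (that the algebra $\mathbf{A}$ of Subsection 3.2 is $R$ applied to the trivial algebra $\unitobj$, and that the $Z$-induced structure on $Z(\unitobj)$ is the defining structure of $A$) is exactly right and is the only point of identification needed. In effect, the paper's explicit computation inlines, in this special case, the proofs of the general lemmas you cite; your version is shorter and makes the mechanism (a monoidal adjunction restricts to algebra morphisms on both sides of its hom-bijection) transparent, at the cost of importing those standard categorical facts. Both arguments rest on the same underlying monoidality of $\delta$, $\varepsilon$, $U$ and $R$, and both correctly conclude by restricting the already-established bijection of $\Hom$-sets in the two directions.
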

\begin{proof}
  We recall that $\mathbf{A}$ is an algebra with multiplication $m = Z_2(\unitobj, \unitobj)$ and unit $u = Z_0$. Let $f: A \to \unitobj$ be a morphism in $\mathcal{C}$, and let $\widetilde{f}$ be the morphism in $\mathcal{Z}(\mathcal{C})$ defined in the above. If $f$ is a morphism of algebras in $\mathcal{C}$, then we have
  \begin{align*}
    m \circ (\widetilde{f} \otimes \widetilde{f})
    & = Z_2(\unitobj, \unitobj) \circ (Z(f) \otimes Z(f)) \circ (\delta_{\unitobj} \otimes \delta_{\unitobj}) \\
    & = Z(f \otimes f) \circ Z_2(Z(\unitobj), Z(\unitobj)) \circ (\delta_{\unitobj} \otimes \delta_{\unitobj}) \\
    & = Z(f \circ m) \circ Z_2(Z(\unitobj), Z(\unitobj)) \circ (\delta_{\unitobj} \otimes \delta_{\unitobj}) \\
    & = Z(f) \circ Z(Z_2(\unitobj, \unitobj)) \circ Z_2(Z(\unitobj), Z(\unitobj)) \circ (\delta_{\unitobj} \otimes \delta_{\unitobj}) \\
    & = Z(f) \circ \delta_{\unitobj \otimes \unitobj} \circ Z_2(\unitobj, \unitobj)
    = \widetilde{f} \circ m.
  \end{align*}
  Here, the second equality follows from the naturality of $Z_2$, the third equality from that $f$ is a morphism of algebras, and the fifth from the fact that $\delta$ is a monoidal natural transformation. We also have
  \begin{equation*}
    \widetilde{f} \circ u
    = Z(f) \circ \delta_{\unitobj} \circ Z_0
    = Z(f) \circ Z(Z_0) \circ Z_0
    = Z(f \circ u) \circ u
    = Z(\id_{\unitobj}) \circ u = u.
  \end{equation*}
  Thus $\widetilde{f}$ is a morphism of algebras in $\mathcal{Z}(\mathcal{C})$. If, conversely, $\widetilde{f}$ is a morphism of algebras in $\mathcal{Z}(\mathcal{C})$, then we have
  \begin{gather*}
    f \circ m
    = \varepsilon_{\unitobj} \circ \widetilde{f} \circ m
    = \varepsilon_{\unitobj} \circ m \circ (\widetilde{f} \otimes \widetilde{f})
    = (\varepsilon_{\unitobj} \otimes \varepsilon_{\unitobj}) \circ (\widetilde{f} \otimes \widetilde{f})
    = f \otimes f, \\
    f \circ u
    = \varepsilon_{\unitobj} \circ \widetilde{f} \circ u
    = \varepsilon_{\unitobj} \circ u
    = \varepsilon_{\unitobj} \circ Z_0
    = Z_0 = u.
  \end{gather*}
  Thus $f$ is a morphism of algebras. In summary, $f$ is a morphism of algebras in $\mathcal{C}$ if and only if $\widetilde{f}$ is a morphism of algebras in $\mathcal{Z}(\mathcal{C})$. The proof is completed.
\end{proof}

The following lemma is one of key observations for the proof of the main result of this paper.

\begin{lemma}
  \label{lem:Alg-ZC-A-A}
  We have a bijective map
  \begin{equation*}
    \Inv(\mathcal{C})
    \to \Alg_{\mathcal{Z}(\mathcal{C})}(\mathbf{A}, \mathbf{A}),
    \quad [\beta] \mapsto Z(\intch_{\beta}) \circ \delta_{\unitobj}.
  \end{equation*}
\end{lemma}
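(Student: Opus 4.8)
The plan is to observe that the map in question is nothing but the composition of the two bijections established in the preceding lemmas. Concretely, Lemma~\ref{lem:Alg-C-A-1} gives a bijection $\mathrm{ch}\colon \Inv(\mathcal{C}) \to \mathrm{Alg}_{\mathcal{C}}(A, \unitobj)$, $[\beta] \mapsto \mathrm{ch}_{\beta}$, and Lemma~\ref{lem:Alg-C-A-1-Alg-ZC-A-A} gives a bijection $\mathrm{Alg}_{\mathcal{C}}(A, \unitobj) \to \mathrm{Alg}_{\mathcal{Z}(\mathcal{C})}(\mathbf{A}, \mathbf{A})$, $f \mapsto \widetilde{f} = Z(f) \circ \delta_{\unitobj}$. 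Composing the two, we send $[\beta]$ to $\widetilde{\mathrm{ch}_{\beta}} = Z(\mathrm{ch}_{\beta}) \circ \delta_{\unitobj}$, which is exactly the map asserted in the statement. A composition of bijections is a bijection, so we are done.

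The only thing requiring a word of justification is that the formula $[\beta] \mapsto Z(\mathrm{ch}_{\beta}) \circ \delta_{\unitobj}$ literally agrees with the composite $\widetilde{(-)} \circ \mathrm{ch}$; this is immediate from the definition of $f \mapsto \widetilde{f}$ recalled just before Lemma~\ref{lem:Alg-C-A-1-Alg-ZC-A-A}. There is essentially no obstacle here: all the content — surjectivity and injectivity of $\mathrm{ch}$ via the module-category description of ${}_A\mathcal{C}$ in Lemma~\ref{lem:adj-alg-mod}, and the fact that $f \mapsto \widetilde{f}$ is an algebra isomorphism restricting to a bijection on algebra morphisms — has already been done in Lemmas~\ref{lem:Alg-C-A-1} and~\ref{lem:Alg-C-A-1-Alg-ZC-A-A}. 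I would therefore keep the proof to a single sentence identifying the composite, possibly adding a remark that this description will be the one used downstream in the proof of the main theorem.
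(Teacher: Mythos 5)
Your proposal is correct and coincides with the paper's own proof, which simply composes the bijections of Lemma~\ref{lem:Alg-C-A-1} and Lemma~\ref{lem:Alg-C-A-1-Alg-ZC-A-A}; your extra remark that the formula $[\beta] \mapsto Z(\mathrm{ch}_{\beta}) \circ \delta_{\unitobj}$ is literally the composite $\widetilde{(-)} \circ \mathrm{ch}$ is exactly the right (and only) point to check. Nothing further is needed.
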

\begin{proof}
  Compose the bijections obtained by Lemmas~\ref{lem:Alg-C-A-1} and~\ref{lem:Alg-C-A-1-Alg-ZC-A-A}.
\end{proof}

\begin{remark}
  The sets $\Inv(\mathcal{C})$ and $\Alg_{\mathcal{Z}(\mathcal{C})}(\mathbf{A}, \mathbf{A})$ are monoids with respect to the tensor product and the composition of morphisms, respectively. One can check that the set $\Alg_{\mathcal{C}}(A, \unitobj)$ is a sub\-monoid of $\mathrm{CF}(\mathcal{C})$. In this section, we have obtained the following two bijections:
  \begin{equation*}
    \Inv(\mathcal{C})
    \xrightarrow{\quad \text{Lemma~\ref{lem:Alg-C-A-1}} \quad}
    \Alg_{\mathcal{C}}(A, \unitobj)
    \xrightarrow{\quad \text{Lemma~\ref{lem:Alg-C-A-1-Alg-ZC-A-A}} \quad}
    \Alg_{\mathcal{Z}(\mathcal{C})}(\mathbf{A}, \mathbf{A}).
  \end{equation*}
  These bijections are in fact isomorphisms of monoids. Since $\Inv(\mathcal{C})$ is in fact a group (with the inverse given by $[\beta]^{-1} = [\beta^*]$), the monoids $\Alg_{\mathcal{C}}(A, \unitobj)$ and $\Alg_{\mathcal{Z}(\mathcal{C})}(\mathbf{A}, \mathbf{A})$ are also groups.
\end{remark}

\section{Pivotal structures of the Drinfeld center}
\label{sec:main-result}

\subsection{Main result}
\label{subsec:main-result}

Let $\mathcal{C}$ be a finite tensor category, and let $\cAut_{\otimes}(\mathcal{C})$ be the category of tensor autoequivalences of $\mathcal{C}$. Given $F \in \cAut_{\otimes}(\mathcal{C})$, we denote by
\begin{equation*}
  \widetilde{F}: \mathcal{Z}(\mathcal{C}) \to \mathcal{Z}(\mathcal{C})
\end{equation*}
the braided tensor autoequivalence induced by $F$. If we write $\widetilde{F}(\mathbf{V}) = (F(V), \sigma_V^F)$ for $\mathbf{V} = (V, \sigma_V) \in \mathcal{Z}(\mathcal{C})$, then the half-braiding $\sigma_V^F: F(V) \otimes (-) \to (-) \otimes F(V)$ is determined by the property that the equation
\begin{equation}
  \label{eq:def-half-br-F-tilde}
  F_2(X, V) \circ \sigma^F_V(F(X)) = F(\sigma(X)) \circ F_2(V, X)
\end{equation}
holds for all objects $X \in \mathcal{C}$. The monoidal structure of $\widetilde{F}$ is same as $F$.

Let $\mathscr{J}(F, G)$ be the class of all pairs $(\beta, j)$ consisting of an invertible object $\beta \in \mathcal{C}$ and a monoidal natural transformation $j \in \Nat_{\otimes}(I^{\beta} F, G)$. Given two elements $\boldsymbol{\beta} = (\beta, j)$ and $\boldsymbol{\beta}' = (\beta', j')$ of $\mathscr{J}(F, G)$, we write $\boldsymbol{\beta} \sim \boldsymbol{\beta}'$ if there exists an isomorphism $c: \beta \to \beta'$ in $\mathcal{C}$ such that the equation
\begin{equation}
  \label{eq:def-J-F-G-equiv}
  j'_X = j_X \circ (c^{-1} \otimes \id_{F(X)} \otimes c^*)
\end{equation}
holds for all objects $X \in \mathcal{C}$. It is easy to see that $\sim$ is an equivalence relation on the class $\mathscr{J}(F, G)$. We set $J(F, G) = \mathscr{J}(F, G) / \mathord{\sim}$. Now we are ready to state the main result of this section as follows:

\begin{theorem}
  \label{thm:main-thm}
  Given an element $\boldsymbol{\beta} = (\beta, j)$ of $\mathscr{J}(F, G)$, we define
  \begin{equation}
    \label{eq:def-Phi-1}
    \Phi(\boldsymbol{\beta})_{\mathbf{V}}: \widetilde{F}(\mathbf{V}) \to \widetilde{G}(\mathbf{V})
    \quad (\mathbf{V} \in \mathcal{Z}(\mathcal{C}))
  \end{equation}
  to be the composition
  \begin{equation}
    \label{eq:def-Phi-2}
    \newcommand{\xarr}[1]{\xrightarrow{\ #1 \ }}
    F(V) \xarr{\id \otimes \coev_{\beta}}
    F(V) \otimes \beta \otimes \beta^*
    \xarr{\sigma^F_V(\beta) \otimes \id}
    \beta \otimes F(V) \otimes \beta^*
    \xarr{j_{V}}
    G(V)
  \end{equation}
  for $\mathbf{V} = (V, \sigma_V) \in \mathcal{Z}(\mathcal{C})$ with $\widetilde{F}(\mathbf{V}) = (F(V), \sigma_V^F)$. Then the map
  \begin{equation}
    \label{eq:Phi}
    \Phi_{F,G}: J(F, G)
    \to \Nat_{\otimes}(\widetilde{F}, \widetilde{G}),
    \quad [\boldsymbol{\beta}] \mapsto \Phi(\boldsymbol{\beta})
  \end{equation}
  is a well-defined bijection.
\end{theorem}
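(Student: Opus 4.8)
The plan is to construct an explicit inverse to $\Phi_{F,G}$ using the Hopf-comonadic machinery of Section~\ref{sec:central-Hopf-comonad}, and in particular the bijection of Lemma~\ref{lem:Alg-ZC-A-A}. The key idea is that a monoidal natural transformation $\theta \in \Nat_{\otimes}(\widetilde{F}, \widetilde{G})$ should, after passing through the equivalence $\End_{\mathcal{Z}(\mathcal{C})}(\mathcal{C}) \approx \mathcal{C} \boxtimes \mathcal{C}^{\rev}$ of Lemma~\ref{lem:double-dual-of-E} (and its variants used in the proof of Lemma~\ref{lem:adj-alg-mod}), correspond to an algebra endomorphism of $\mathbf{A}$, and hence — by Lemma~\ref{lem:Alg-ZC-A-A} — to an invertible object $\beta \in \mathfrak{B}$; the remaining data is then forced to be a monoidal natural transformation $j \colon I^{\beta}F \to G$. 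Conversely, unwinding these equivalences on $(\beta, j)$ should reproduce formula~\eqref{eq:def-Phi-2}, showing the two constructions are mutually inverse.

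First I would reduce to the case $F = \id_{\mathcal{C}}$: replacing $G$ by $F^{-1}G$ (more precisely, precomposing/postcomposing with the braided autoequivalence $\widetilde{F}^{-1}$) gives a bijection $\Nat_{\otimes}(\widetilde{F}, \widetilde{G}) \cong \Nat_{\otimes}(\id, \widetilde{F^{-1}G})$, and correspondingly $\mathfrak{J}(F,G) \cong \mathfrak{J}(\id, F^{-1}G)$ via $(\beta, j) \mapsto (\beta, F^{-1}\text{-conjugate of } j)$, compatibly with $\Phi$; one checks these translations commute with the $\Phi$-maps by a direct diagram chase. So from now on take $F = \id$ and write $G$ for the target and $\widetilde{G}$ for the induced autoequivalence of $\mathcal{Z}(\mathcal{C})$. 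Second, I would show that giving $\theta \in \Nat_{\otimes}(\id_{\mathcal{Z}(\mathcal{C})}, \widetilde{G})$ is the same as giving an algebra morphism $\mathbf{A} \to \widetilde{G}(\mathbf{A})$ — or, once one notes $\widetilde{G}(\mathbf{A}) \cong \mathbf{A}$ as algebras in $\mathcal{Z}(\mathcal{C})$ since $G$ is a tensor autoequivalence, an algebra endomorphism of $\mathbf{A}$ — by naturality and the universal property~\eqref{eq:Hom-V-ZV} of the end defining $Z$. Concretely, $\theta$ is determined by its component $\theta_{\mathbf{A}}$ because $\mathbf{A} = R(\unitobj)$ is the free $Z$-comodule on the unit and every $\mathbf{V}$ receives a canonical map from (a twist of) $\mathbf{A}$; monoidality of $\theta$ translates precisely into $\theta_{\mathbf{A}}$ being an algebra map. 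Third, Lemma~\ref{lem:Alg-ZC-A-A} then produces a unique $[\beta] \in \Inv(\mathcal{C})$, and the residual freedom in reconstructing $\theta$ from $\beta$ is exactly a choice of monoidal natural isomorphism $j \colon I^{\beta} \to G$ — here one uses Lemma~\ref{lem:conj-beta-duality-trans} and the description, from the proof of Lemma~\ref{lem:Alg-C-A-1}, that $(\unitobj, \mathrm{ch}_{\beta})$ corresponds to $\beta \boxtimes \beta^*$ under ${}_A\mathcal{C} \approx \mathcal{C}\boxtimes\mathcal{C}$.

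The main obstacle — and the technical heart of the argument — will be verifying that the bijection just described, built abstractly from the chain of module-category equivalences $\mathcal{E}_1, \dots, \mathcal{E}_4$ in the proof of Lemma~\ref{lem:adj-alg-mod} together with $H \colon \mathcal{C} \to \mathcal{C}^Z_Z$, actually sends $(\beta, j)$ to the \emph{specific} formula~\eqref{eq:def-Phi-2} and not merely to \emph{some} natural transformation. This requires carefully tracking the half-braiding $\sigma_V$ and the structure morphisms $\mathbb{H}^{(\ell)}, \mathbb{H}^{(r)}, Z_2, Z_0$ through each equivalence, and I expect the cleanest way is a graphical-calculus computation in $\mathcal{C}$, analogous to the one in the proof of Proposition~\ref{prop:Z-sigma-hat}: one writes down $\pi_V(X) \circ (\text{reconstructed }\theta_{\mathbf{V}})$ and $\pi_V(X) \circ (\Phi(\boldsymbol\beta)_{\mathbf{V}}$ composed with the relevant duality transformations$)$ using~\eqref{eq:def-sigma-from-rho}, \eqref{eq:can-act-1}, \eqref{eq:Hopf-comonad-Z-Z2}, and Lemma~\ref{lem:conj-beta-duality-trans}, and checks they agree by moves in the graphical calculus; the universal property of the end $Z(V)$ then forces equality. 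Once this identification is in place, bijectivity of $\Phi_{F,G}$ is immediate from the bijectivity of each link in the chain, and applying the result to $F = \id_{\mathcal{C}}$, $G = (-)^{**}$ recovers Theorem~\ref{thm:classify-piv-str}; one should separately record that $\Phi(\boldsymbol\beta)$ is automatically invertible (its components are, by Lemma~\ref{lem:mon-nat-inv} and invertibility of $\coev_\beta$, $\sigma'_V$, $j_V$) and monoidal (a short check against the definition of the tensor product in $\mathcal{Z}(\mathcal{C})$ and the monoidal structures of $I^\beta$, $\widetilde F$, $\widetilde G$), so that it genuinely lands in $\Nat_\otimes(\widetilde F, \widetilde G)$.
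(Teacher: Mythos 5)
Your overall strategy---reduce to $F=\id_{\mathcal{C}}$ and build an inverse by passing to the adjoint algebra $\mathbf{A}=R(\unitobj)$ and invoking Lemma~\ref{lem:Alg-ZC-A-A}---matches the paper's route, but your second step contains a genuine error that the rest of the argument never repairs. You claim that a monoidal natural transformation $\theta\in\Nat_{\otimes}(\id_{\mathcal{Z}(\mathcal{C})},\widetilde{G})$ ``is the same as'' an algebra morphism $\mathbf{A}\to\widetilde{G}(\mathbf{A})$, i.e.\ that $\theta$ is determined by $\theta_{\mathbf{A}}$. This is false in general: by Lemma~\ref{lem:Alg-ZC-A-A} the algebra endomorphisms of $\mathbf{A}$ are in bijection with $\Inv(\mathcal{C})$, whereas $\Nat_{\otimes}(\id_{\mathcal{Z}(\mathcal{C})},\id_{\mathcal{Z}(\mathcal{C})})\cong\Inv(\mathcal{Z}(\mathcal{C}))$; already for $\mathcal{C}$ the category of $\Gamma$-graded vector spaces over a finite abelian group $\Gamma$ these sets have different cardinalities ($\Gamma$ versus $\Gamma\times\widehat{\Gamma}$). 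Concretely, $\theta_{\mathbf{A}}$ only remembers the invertible object $\beta$, and all the transformations $\Phi_0(\beta,j)$ for the various $j\in\Nat_{\otimes}(I^{\beta},G)$ have the same component at $\mathbf{A}$. Your third step then speaks of ``residual freedom'' given by $j$, which contradicts the determination claim of the second step; more importantly, you give no mechanism for recovering $j$ from $\theta$, and the chain of equivalences $\mathcal{E}_1,\dotsc,\mathcal{E}_4$ from the proof of Lemma~\ref{lem:adj-alg-mod} cannot supply it: that chain classifies left $A$-modules (hence characters, hence $\beta$), not natural transformations between braided autoequivalences of $\mathcal{Z}(\mathcal{C})$.

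The missing idea is to use the components of $\theta$ at \emph{all} free comodules $R(V)$, $V\in\mathcal{C}$, not only at $R(\unitobj)$. This is what the paper does: one twists $h_{R(V)}$ by the monoidal isomorphisms $\xi^{F}\colon\widetilde{F}R\to RF$ and $\zeta^{\beta}\colon RI^{\beta}\to R$ to obtain $\phi_V\colon R(V^{\beta})\to RF(V)$, verifies (using the defining property of $\beta$) that $\phi_V$ is a morphism of right Hopf modules, and then applies the fundamental theorem for Hopf modules (Lemma~\ref{lem:FTHM}) to descend $\phi_V$ to a morphism $j_V\colon V^{\beta}\to F(V)$ with $Z(j_V)=\phi_V$. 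Injectivity and surjectivity of $\Phi_0$ are then checked by comparing the two constructions on the free comodules, together with the observation \eqref{eq:h-RV} that a natural transformation out of $\id_{\mathcal{Z}(\mathcal{C})}$ is determined by its components at the $R(V)$, since every object of $\mathcal{Z}(\mathcal{C})$ embeds into an exact sequence of free comodules. Without this Hopf-module step (or an equivalent substitute) your proposal establishes only the $\beta$-part of the correspondence, and the ``technical heart'' you defer to a graphical computation is not merely the verification of a formula but the actual construction of the inverse.
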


If $G = (-)^{**}$ is the double dual functor on $\mathcal{C}$, then $\widetilde{G}$ is precisely the double dual functor on $\mathcal{Z}(\mathcal{C})$ by Remark~\ref{rem:Dri-cen-double-dual}. Thus we obtain Theorem~\ref{thm:classify-piv-str} as the special case where $F = \id_{\mathcal{C}}$ and $G = (-)^{**}$. More applications of this theorem will be given in Section~\ref{sec:applications}. The rest of this section is devoted to prove Theorem~\ref{thm:main-thm}.

\begin{notation*}
  Till the end of this section, we fix a finite tensor category $\mathcal{C}$.  We identify $\mathcal{Z}(\mathcal{C})$ with the category $\mathcal{C}^Z$ of comodules over the central Hopf comonad $Z$, and then define $U$ and $R$ by~\eqref{eq:Hopf-monad-T-forget} and~\eqref{eq:Hopf-monad-T-free} with $T = Z$, respectively. The symbols
  \begin{gather*}
    \pi_{V}(X): Z(V) \to X \otimes V \otimes X^*,
    \quad \delta_V: Z(V) \to Z Z(V),
    \quad \varepsilon_V: Z(V) \to V, \\
    Z_2(V,W): Z(V) \otimes Z(W) \to Z(V \otimes W)
    \quad \text{and} \quad
    Z_0: \unitobj \to Z(\unitobj)
  \end{gather*}
  have the same meaning as in Section~\ref{sec:central-Hopf-comonad}; see \eqref{eq:Hopf-comonad-Z-pi}--\eqref{eq:Hopf-comonad-Z-eps}. We denote by $\eta: \id_{\mathcal{Z}(\mathcal{C})} \to R U$ the unit of the adjunction $U \dashv R$ given by \eqref{eq:Hopf-monad-T-unit-counit} with $T = Z$.
\end{notation*}

\subsection{Outline of this section}
\label{subsec:outline-main-result}

This section is outlined as follows: In \S\ref{subsec:comon-desc-G-tilde}, we express the equivalence $\widetilde{G}$ induced by $G \in \cAut_{\otimes}(\mathcal{C})$ in terms of the central Hopf comonad. In \S\ref{subsec:well-def-Phi}, we show that the map $\Phi_{F,G}$ is well-defined, that is, $\Phi(\boldsymbol{\beta})$ defined by~\eqref{eq:def-Phi-2} is a monoidal natural transformation depending only on the equivalence class of $\boldsymbol{\beta}$. In \S\ref{subsec:reduction}, we explain that it is enough to consider the case where $F = \id_{\mathcal{C}}$ to prove Theorem~\ref{thm:main-thm}. Some technical lemmas are prepared in \S\ref{subsec:technical-lemmas}. Now, in view of the discussion of \S\ref{subsec:reduction}, we fix a tensor autoequivalence $G$ on $\mathcal{C}$ and consider the map $\Phi_G := \Phi_{\id_{\mathcal{C}}, G}$. In \S\ref{subsec:Phi-inv}, we construct the map $\Psi_G: \Nat(\id_{\mathcal{Z}(\mathcal{C})}, \widetilde{G}) \to J(\id_{\mathcal{C}}, G)$ by Hopf comonadic techniques introduced in Section~\ref{sec:central-Hopf-comonad}. We then show that $\Psi_G \Phi_G$ and $\Phi_G \Psi_G$ are the identity maps in Subsections \ref{subsec:compute-Psi-Phi} and~\ref{subsec:compute-Phi-Psi}, respectively, by using some technical lemmas given in \S\ref{subsec:technical-lemmas}.

\subsection{Comonadic description of $\widetilde{G}$}
\label{subsec:comon-desc-G-tilde}

Let $G$ be a tensor autoequivalence of $\mathcal{C}$. We first describe the braided tensor autoequivalence $\widetilde{G}$ on $\mathcal{Z}(\mathcal{C})$ in terms of the central Hopf comonad $Z$. We recall that there is the natural transformation
\begin{equation*}
  \gamma_X^G: G(X^*) \to G(X)^* \quad (X \in \mathcal{C}),
\end{equation*}
which we call the duality transformation (see Section \ref{sec:prelim}). For an integer $n \ge 2$ and objects $X_1, \dotsc, X_n \in \mathcal{C}$, we denote by
\begin{equation*}
  G_n(X_1, \dotsc, X_n): G(X_1) \otimes \dotsb \otimes G(X_n) \to G(X_1 \otimes \dotsb \otimes X_n)
\end{equation*}
the morphism constructed by an iterative use of $G_2$. We fix an object $V \in \mathcal{C}$. Then, by Lemma~\ref{lem:end-equiv-twist}, we can define a morphism $\xi^G_V: G Z(V) \to Z G(V)$ in $\mathcal{C}$ to be the unique morphism such that the diagram
\begin{equation*}
  \xymatrix{
    G Z(V)
    \ar[d]_{\xi^G_V}
    \ar[rrr]^(.375){G(\pi_{V}(X))}
    & & &
    G(X \otimes V \otimes X^*)
    \ar[d]^{G_{3}'(V, X)} \\
    Z G(V)
    \ar[rrr]_(.375){\pi_{G(V)}(G(X))}
    & & &
    G(X) \otimes G(V) \otimes G(X)^*
  }
\end{equation*}
commutes for all $X \in \mathcal{C}$, where
\begin{equation}
  \label{eq:xi-G-def-can-iso}
  G_3'(V, X) := (\id_{G(X)} \otimes \id_{G(V)} \otimes \gamma_X^F) \circ G_{3}(X, V, X^*)^{-1}.
\end{equation}
Since the morphisms $G(\pi_V(X))$, $\pi_{G(V)}(G(X))$ and $G_3'(V, X)$ are natural in the variable $V$, the morphism $\xi^G_V$ is also natural in $V$. Since $G_3'(V,X)$ is invertible, we see that $\xi^G$ is in fact a natural isomorphism.

\begin{lemma}
  \label{lem:G-tilde-Z-comod}
  If we regard the autoequivalence $\widetilde{G}: \mathcal{Z}(\mathcal{C}) \to \mathcal{Z}(\mathcal{C})$ as an autoequivalence on the category $\mathcal{C}^Z$ of $Z$-comodules, then it is expressed by
  \begin{equation*}
    \widetilde{G}(\mathbf{V}) = (G(V), \xi^{G}_V G(\rho_V))
  \end{equation*}
  for all $Z$-comodules $\mathbf{V} = (V, \rho_V) \in \mathcal{C}^Z$.
\end{lemma}
\begin{proof}
  Let $\mathbf{V} = (V, \rho)$ be a $Z$-comodule, and let
  \begin{equation*}
    \sigma_V(X): V \otimes X \to V \otimes X
    \quad \text{and} \quad
    \sigma_V'(X): G(V) \otimes X \to X \otimes G(V)
    \quad (X \in \mathcal{C})
  \end{equation*}
  be the natural transformations corresponding to $\rho_V$ and $\rho'_V := \xi^{G}_V G(\rho_V)$, respectively, via the bijection~\eqref{eq:Hom-V-ZV}. This lemma is equivalent to that the equation
  \begin{equation}
    \label{eq:G-tilde-Z-comod-2}
    G_2(X, V) \circ \sigma'_V(G(X)) = G(\sigma_V(X)) \circ G_2(V, X)
  \end{equation}
  holds for all $X \in \mathcal{C}$. We compute:
  \begin{align*}
    & G_2(X, V) \circ \sigma_V'(G(X)) \circ G_2(V, X)^{-1} \\
    & = (G_2(X, V) \otimes \eval_{G(X)})
      \circ ((\pi_{G(V)}(G(X)) \circ \rho') \otimes \id_{G(X)}) \circ G_2(V, X)^{-1} \\
    & = (G_2(X, V) \otimes \eval_{G(X)})
      \circ (\id_{G(X)} \otimes \id_{G(V)} \otimes \gamma_X \otimes \id_{G(X)}) \\
    & \qquad \circ (G_3(X, V, X^*)^{-1} \otimes \id_{G(X)})
      \circ (G(\pi_V(X) \circ \rho) \otimes \id_{G(X)}) \circ G_2(V, X)^{-1} \\
    & = (G_2(X, V) \otimes G_0^{-1}G(\eval_X))
      \circ (\id_{G(X)} \otimes \id_{G(V)} \otimes G_2(X^*, X)) \\
    & \qquad \circ G_4(X, V, X^*, V)^{-1}
    \circ G((\pi_V(X) \circ \rho)) \otimes \id_{G(X)}) \\
    & = G((\id_{X} \otimes \id_V \otimes \eval_X) \circ ((\pi_V(X) \circ \rho) \otimes \id_X)) = G(\sigma_V(X)).
  \end{align*}
  Here, the first equality follows from the definition of $\sigma'_V$, the second from the definition of $G_3'$, the third from the definition of $\gamma^G$, the fourth from the definition of a monoidal functor, and the last from the definition of $\sigma_V$. Hence~\eqref{eq:G-tilde-Z-comod-2} follows. The proof is done.
\end{proof}

\subsection{Well-definedness of $\Phi$}
\label{subsec:well-def-Phi}

We show that the map~\eqref{eq:Phi} is well-defined, that is, the natural isomorphism $\Phi(\boldsymbol{\beta})$ of Theorem~\ref{thm:main-thm} is a monoidal natural transformation depending only on the equivalence class of $\boldsymbol{\beta} \in \mathscr{J}(F, G)$.

Let $\beta$ be an invertible object of $\mathcal{C}$. We consider the braided tensor autoequivalence of $\mathcal{Z}(\mathcal{C})$ induced by $I^{\beta}$. For an object $\mathbf{V} = (V, \sigma_V) \in \mathcal{Z}(\mathcal{C})$, we define
\begin{equation}
  \label{eq:mon-nat-iso-chi}
  \chi^{\beta}_{\mathbf{V}} = (\sigma_V(\beta) \otimes \id_{\beta^*}) \circ (\id_V \otimes \coev_{\beta}).
\end{equation}

\begin{lemma}
  \label{lem:chi-beta-monoidal}
  $\chi^{\beta}$ is a monoidal natural transformation from $\id_{\mathcal{Z}(\mathcal{C})}$ to $\widetilde{I^{\beta}}$.
\end{lemma}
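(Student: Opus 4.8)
The plan is to verify directly that $\chi^\beta$ satisfies the three conditions that characterize a monoidal natural transformation between the tensor functors $\id_{\mathcal{Z}(\mathcal{C})}$ and $\widetilde{I^\beta}$: (i) naturality in $\mathbf{V}$, (ii) compatibility with the unit constraints, and (iii) compatibility with the tensor constraints. Throughout I would work in the graphical calculus introduced in the preliminaries, drawing $\sigma_V$ as a crossing and using the defining equations of $\widetilde{I^\beta}$ together with the monoidal structure of $I^\beta$ recorded in Subsection~\ref{subsec:inv-obj}, namely $I^\beta_0 = \coev_\beta$ and $I^\beta_2(V,W) = \id_\beta\otimes\id_V\otimes\eval_\beta\otimes\id_W\otimes\id_{\beta^*}$.

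First I would dispatch naturality: if $f\colon \mathbf{V}\to\mathbf{W}$ is a morphism in $\mathcal{Z}(\mathcal{C})$, then $f$ intertwines the half-braidings, so $\sigma_W(\beta)\circ(f\otimes\id_\beta) = (\id_\beta\otimes f)\circ\sigma_V(\beta)$; tensoring with $\id_{\beta^*}$ on the right and precomposing with $\id\otimes\coev_\beta$ gives $\chi^\beta_{\mathbf{W}}\circ f = \widetilde{I^\beta}(f)\circ\chi^\beta_{\mathbf{V}}$, since $\widetilde{I^\beta}(f) = \id_\beta\otimes f\otimes\id_{\beta^*}$ on underlying objects. Next, the unit condition $\chi^\beta_{\unitobj}\circ(\id_{\mathcal{Z}(\mathcal{C})})_0 = (\widetilde{I^\beta})_0$: the unit object of $\mathcal{Z}(\mathcal{C})$ is $(\unitobj,\sigma)$ with $\sigma(X) = \id_X$, so $\chi^\beta_{\unitobj} = (\id_\beta\otimes\coev_\beta\text{-type composite}) = \coev_\beta\colon \unitobj\to\beta\otimes\unitobj\otimes\beta^*$, which is exactly $I^\beta_0$, hence equals the unit constraint of $\widetilde{I^\beta}$; this is essentially a one-line check once one unwinds that $\sigma_{\unitobj}$ is trivial.

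The substantive step is the tensor-constraint condition: for $\mathbf{V} = (V,\sigma_V)$ and $\mathbf{W} = (W,\sigma_W)$ one must show
\begin{equation*}
  \chi^\beta_{\mathbf{V}\otimes\mathbf{W}} = (\widetilde{I^\beta})_2(\mathbf{V},\mathbf{W})\circ(\chi^\beta_{\mathbf{V}}\otimes\chi^\beta_{\mathbf{W}}),
\end{equation*}
where the tensor product of half-braidings is $\sigma_{V\otimes W}(X) = (\id_X\otimes\sigma_V(X)^{\text{-ish}})\cdots$ as recorded at the end of the graphical-calculus subsection, i.e. $\sigma_{V\otimes W}(\beta) = (\sigma_V(\beta)\otimes\id_W)$ followed by $(\id_\beta\otimes\id_V\otimes\sigma_W(\beta))$ reading appropriately. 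Expanding the left side, one inserts $\coev_\beta$ once and slides the pair $\beta\beta^*$ past $V\otimes W$ using the hexagon-type identity for $\sigma_{V\otimes W}$; expanding the right side, one inserts $\coev_\beta$ twice and uses $(\widetilde{I^\beta})_2 = I^\beta_2$, which is precisely an $\eval_\beta$ cancelling a middle $\beta^*\otimes\beta$ pair. The graphical picture is that the two inner $\coev_\beta/\eval_\beta$ on the right side annihilate, and after pulling $\sigma_W(\beta)$ through, the remaining diagram is literally the crossing-diagram for $\sigma_{V\otimes W}(\beta)$ composed with a single $\coev_\beta$. I expect this bookkeeping of when each $\coev_\beta$ and $\eval_\beta$ is introduced and cancelled to be the main obstacle, though it is a routine (if somewhat lengthy) manipulation of string diagrams; invertibility of $\beta$ guarantees all the $\eval_\beta$, $\coev_\beta$ are isomorphisms but is not otherwise needed here. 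Once all three conditions are checked, $\chi^\beta\in\Nat_\otimes(\id_{\mathcal{Z}(\mathcal{C})},\widetilde{I^\beta})$ as claimed.
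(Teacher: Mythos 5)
There is a genuine gap in your plan: you never verify that the components $\chi^{\beta}_{\mathbf{V}}$ are morphisms in $\mathcal{Z}(\mathcal{C})$. The formula $(\sigma_V(\beta)\otimes\id_{\beta^*})\circ(\id_V\otimes\coev_{\beta})$ a priori only produces a morphism $V\to\beta\otimes V\otimes\beta^*$ in $\mathcal{C}$; before any of your three conditions even make sense, you must check that it lies in $\Hom_{\mathcal{Z}(\mathcal{C})}(\mathbf{V},\widetilde{I^{\beta}}(\mathbf{V}))$, i.e.\ that for every $X\in\mathcal{C}$ one has
\begin{equation*}
  \sigma'_V(X)\circ(\chi^{\beta}_{\mathbf{V}}\otimes\id_X)
  = (\id_X\otimes\chi^{\beta}_{\mathbf{V}})\circ\sigma_V(X),
\end{equation*}
where $\sigma'_V$ is the half-braiding of $\widetilde{I^{\beta}}(\mathbf{V})$. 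This $\sigma'_V$ is not given by naively conjugating $\sigma_V$ by $\beta$: it is determined by the commuting square with $I^{\beta}_2$ that defines the induced autoequivalence $\widetilde{I^{\beta}}$ (so at objects of the form $X^{\beta}$ it satisfies $I^{\beta}_2(X,V)\circ\sigma'_V(X^{\beta})=I^{\beta}(\sigma_V(X))\circ I^{\beta}_2(V,X)$, which suffices because $I^{\beta}$ is essentially surjective). Establishing the displayed identity is a genuine string-diagram computation using the multiplicativity of $\sigma_V$ on tensor products together with the duality data of $\beta$, and in the paper's own proof it is the first and by far the longest step. Your ``naturality'' check does not cover it: naturality concerns squares for morphisms $f:\mathbf{V}\to\mathbf{W}$ and is carried out on underlying morphisms in $\mathcal{C}$, whereas the issue here is whether each individual component is a morphism of the center at all.

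The three conditions you do address are essentially the same (short) checks the paper makes after that step: naturality follows from the defining property of morphisms in $\mathcal{Z}(\mathcal{C})$, the unit condition reduces to $\sigma_{\unitobj}=\id$ giving $\chi^{\beta}_{(\unitobj,\id)}=\coev_{\beta}=I^{\beta}_0$, and your sketch of the tensor constraint (the inner $\eval_{\beta}$ in $I^{\beta}_2$ cancelling one of the two inserted $\coev_{\beta}$'s, leaving the crossing for $\sigma_{V\otimes W}(\beta)$ with a single $\coev_{\beta}$) is the paper's diagrammatic argument; note only that the correct formula is $\sigma_{V\otimes W}(\beta)=(\sigma_V(\beta)\otimes\id_W)\circ(\id_V\otimes\sigma_W(\beta))$, which your write-up garbles. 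As it stands, however, the proposal does not prove the lemma until the membership of each $\chi^{\beta}_{\mathbf{V}}$ in $\Hom_{\mathcal{Z}(\mathcal{C})}(\mathbf{V},\widetilde{I^{\beta}}(\mathbf{V}))$ is supplied.
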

\begin{proof}
  Let $\mathbf{V} = (V, \sigma_V) \in \mathcal{Z}(\mathcal{C})$ be an object. For all $X \in \mathcal{C}$, we have
  \begin{equation*}
    \newcommand{\xylabel}[1]{\scriptstyle\mathstrut#1}
    \knotholesize{6pt}
    \begin{xy} /r1em/:
      (0,2.5)="P1" *+!D{\xylabel{V}};
      p+(3,0)="P2" *+!D{\xylabel{\beta}};
      p+(1,0)="P3" *+!D{\xylabel{X}};
      p+(1,0)="P4" *+!D{\xylabel{\,\beta^*}},
      "P1"; p-(0,1) **\dir{-}
      ?> \pushrect{1}{1},
      \vtwist~{s0}{s1}{s2}{s3},
      s1; p+(1,0) \mycap{+.75}
      ?>; p-(0,1) **\dir{-}
      ?>; p+(1,0) \mycap{-.75}
      ?>; "P2" **\dir{-},
      s2; p-(0,3) **\dir{-}
      ?> *+!U{\xylabel{\beta}},
      s3 \pushrect{3}{3},
      \vtwist~{s0}{s1}{s2}{s3},
      s2 *+!U{\xylabel{X}},
      s3 *+!U{\xylabel{V}},
      s1; "P3" **\dir{-},
      s3+(1,0); "P4" **\dir{-}
      ?< *+!U{\xylabel{\,\beta^*}},
    \end{xy}
    =
    \begin{xy} /r1em/:
      (0,2.5)="P1" *+!D{\xylabel{V}};
      p+(1,0)="P2" *+!D{\xylabel{\beta}};
      p+(1,0)="P3" *+!D{\xylabel{X}};
      p+(1,0)="P4" *+!D{\xylabel{\,\beta^*}},
      "P1" \pushrect{1}{1},
      \vtwist~{s0}{s1}{s2}{s3},
      s2; p-(0,4) **\dir{-} ?>="Q1"
      *+!U{\xylabel{\beta}},
      s3 \pushrect{1}{1},
      \vtwist~{s0}{s1}{s2}{s3},
      s1; "P3" **\dir{-},
      s2; \makecoord{p}{"Q1"} **\dir{-}
      ?> *+!U{\xylabel{X}},
      s3 \pushrect{1}{1},
      \vtwist~{s0}{s1}{s2}{s3},
      s1; "P4" **\dir{-},
      s3 \pushrect{1}{1},
      \vtwist~{s0}{s1}{s2}{s3},
      s6; \makecoord{p}{s2} **\dir{-}
      ?>; s2 \mycap{-.75},
      s3; \makecoord{p}{"Q1"} **\dir{-}
      ?> *+!U{\xylabel{V}},
      s1; p+(1,0) \mycap{+.75}
      ?>; \makecoord{p}{"Q1"} **\dir{-}
      ?> *+!U{\xylabel{\,\beta^*}}
    \end{xy}
  \end{equation*}
  and hence the diagram
  \begin{equation*}
    \knotholesize{.75em}
    \xymatrix{
      V \otimes X^{\beta}
      \ar[d]_{\sigma_{V}(X^{\beta})}
      \ar[rr]^{\chi_{\mathbf{V}}^{\beta} \otimes \id}
      & & V^{\beta} \otimes X^{\beta}
      \ar[rr]^{I^{\beta}_2(V,X)}
      & & (V \otimes X)^{\beta}
      \ar[d]^{\id \otimes \sigma_{V}(X) \otimes \id} \\
      X^{\beta} \otimes V
      \ar[rr]_{\id \otimes \chi_{\mathbf{V}}^{\beta}}
      & & X^{\beta} \otimes V^{\beta}
      \ar[rr]_{I^{\beta}_2(X,V)}
      & & (X \otimes V)^{\beta}
    }
  \end{equation*}
  commutes. Thus, by~\eqref{eq:def-half-br-F-tilde}, we have $\chi^{\beta}_{\mathbf{V}} \in \Hom_{\mathcal{Z}(\mathcal{C})}(\mathbf{V}, \widetilde{I^{\beta}}(\mathbf{V}))$.

  By the definition of morphisms in $\mathcal{Z}(\mathcal{C})$, it is easy to see that the morphism $\chi^{\beta}_{\mathbf{V}}$ is natural in $\mathbf{V} \in \mathcal{Z}(\mathcal{C})$. To complete the proof, we show that $\chi^{\beta}$ is monoidal. It is easy to see that $\chi_{(\unitobj, \id)}^{\beta} = I^{\beta}_0$. We also have
  \begin{equation*}
    \newcommand{\xylabel}[1]{\scriptstyle\mathstrut#1}
    I_2^{\beta}(V, W)
    \circ (\chi_{\mathbf{V}}^{\beta} \otimes \chi_{\mathbf{W}}^{\beta})
    = \knotholesize{6pt}
    \begin{xy} /r1em/:
      (0,1.5)="P1" *+!D{\xylabel{V}};
      p+(3,0)="P2" *+!D{\xylabel{W}},
      "P1"; p-(0,1) **\dir{-}
      ?> \pushrect{1}{1},
      \vtwist~{s0}{s1}{s2}{s3},
      s1; p+(1,0) \mycap{+.75}
      ?>; p-(0,1) **\dir{-}
      ?>; p+(1,0) \mycap{-.75},
      s2; p-(0,1) **\dir{-}
      ?>="Q1" *+!U{\xylabel{\beta}},
      s3; \makecoord{p}{"Q1"} **\dir{-}
      ?> *+!U{\xylabel{V}},
      "P2"; p-(0,1) **\dir{-}
      ?> \pushrect{1}{1},
      \vtwist~{s0}{s1}{s2}{s3},
      s1; p+(1,0) \mycap{+.75}
      ?>; \makecoord{p}{"Q1"} **\dir{-}
      ?> *+!U{\xylabel{\beta^*}},
      s3; \makecoord{p}{"Q1"} **\dir{-}
      ?> *+!U{\xylabel{W}},
    \end{xy}
    = \begin{xy} /r1em/:
      (0,1.5)="P1" *+!D{\xylabel{V}};
      p+(1,0)="P2" *+!D{\xylabel{W}},
      "P2"; p-(0,.5) **\dir{-}
      ?> \pushrect{1}{1},
      \vtwist~{s0}{s1}{s2}{s3},
      s3; p-(0,1.5) **\dir{-}
      ?>  ="Q3" *+!U{\xylabel{W}},
      s1; p+(1,0) \mycap{+.75}
      ?>; \makecoord{p}{"Q3"} **\dir{-}
      ?>  *+!U{\xylabel{\beta^*}},
      "P1"; \makecoord{p}{s2} **\dir{-}
      ?> \pushrect{1}{1},
      \vtwist~{s0}{s1}{s2}{s3},
      s2; p-(0,.5) **\dir{-}
      ?> *+!U{\xylabel{\beta}},
      s3; p-(0,.5) **\dir{-}
      ?> *+!U{\xylabel{V}},
    \end{xy}
    = \chi^{\beta}_{\mathbf{V} \otimes \mathbf{W}}
  \end{equation*}
  for all objects $\mathbf{V}, \mathbf{W} \in \mathcal{Z}(\mathcal{C})$. Therefore $\chi^{\beta} \in \Nat_{\otimes}(\id_{\mathcal{Z}(\mathcal{C})}, \widetilde{I^{\beta}})$.
\end{proof}

The category $\cAut_{\otimes}(\mathcal{C})$ is a strict monoidal category with the tensor product given by the composition. The category $\cAut_{\otimes}^{\mathrm{br}}(\mathcal{Z}(\mathcal{C}))$ of braided tensor autoequivalences of $\mathcal{Z}(\mathcal{C})$ is also a strict monoidal category in the same way. The assignment $F \mapsto \widetilde{F}$ is in fact a strict monoidal functor
\begin{equation*}
  \widetilde{\phantom{x}}: \cAut_{\otimes}(\mathcal{C}) \to \cAut_{\otimes}^{\mathrm{br}}(\mathcal{Z}(\mathcal{C})),
  \quad F \mapsto \widetilde{F}
\end{equation*}
in a natural way. Now let $F, G \in \cAut_{\otimes}(\mathcal{C})$ and $\boldsymbol{\beta} = (\beta, j) \in \mathscr{J}(F, G)$. Then the natural transformation $\Phi(\boldsymbol{\beta})$ of Theorem~\ref{thm:main-thm} is written as the composition
\begin{equation}
  \label{eq:Phi-beta-chi}
  \Phi(\boldsymbol{\beta}) = \left(
    \widetilde{F}
    \xrightarrow{\quad \id \quad}
    \id_{\mathcal{Z}(\mathcal{C})} \circ \widetilde{F}
    \xrightarrow{\quad \chi^{\beta} \quad}
    \widetilde{I^{\beta}} \circ \widetilde{F}
    \xrightarrow{\quad \id \quad}
    \widetilde{I^{\beta} F}
    \xrightarrow{\quad \widetilde{j} \quad}
    \widetilde{G}
  \right).
\end{equation}
Hence $\Phi(\boldsymbol{\beta}): \widetilde{F} \to \widetilde{G}$ is a monoidal natural transformation as a composition of monoidal natural transformations.

Finally, we show that $\Phi(\boldsymbol{\beta}) \in \Nat_{\otimes}(\widetilde{F}, \widetilde{G})$ depends only on the equivalence class of $\boldsymbol{\beta} \in \mathscr{J}(F, G)$. This can be proved directly, but the computation will be easier if we notice the following two lemmas: Let $\beta$ be an invertible object of $\mathcal{C}$. If we identify $\mathcal{Z}(\mathcal{C})$ with $\mathcal{C}^Z$, then:

\begin{lemma}
  \label{lem:chi-1-1}
  For a $Z$-comodule $\mathbf{V} = (V, \rho_V) \in \mathcal{C}^Z$, we have
  \begin{equation*}
    \chi^{\beta}_{\mathbf{V}} = \pi_V(\beta) \circ \rho_V.
  \end{equation*}
\end{lemma}
\begin{proof}
  This lemma is graphically proved as follows:
  \begin{equation*}
    \chi^{\beta}_{\mathbf{V}}
    = \xy /r1.5pc/:
    (0,1) \pushrect{1}{1},
    \vtwist~{s0}{s1}{s2}{s3},
    s0; p+(0,1) **\dir{-} ?> *+!D{\scriptstyle V},
    s1; p+(1,0) \mycap{.75} ?>; p-(0,2) **\dir{-}
    ?> *+!U{\scriptstyle \beta^*},
    s2; p-(0,1) **\dir{-} ?> *+!U{\scriptstyle \beta},
    s3; p-(0,1) **\dir{-} ?> *+!U{\scriptstyle V}
    \endxy
    \mathop{=}^{\text{\eqref{eq:def-sigma-from-rho}}}
    \ \xy /r1.5pc/:
    (0,2)="P1" *+!D{\scriptstyle V},
    "P1"-(0,1.5)="P2" *+!U{\scriptstyle \pi_{V}(\beta)} *\frm{-}="BX1",
    "P1"; "P2" **\dir{} ?(.5) *+!{\scriptstyle \rho_V} *\frm{-}="BX2",
    "BX2"!U; "P1" **\dir{-},
    "BX2"!D; "P2" **\dir{-},
    "BX1"!D!L(.6); \makecoord{p}{(0,-1)}
    **\dir{-} ?> *+!U{\scriptstyle \beta},
    "BX1"!D!L(.0); \makecoord{p}{(0,-1)}
    **\dir{-} ?> *+!U{\scriptstyle V},
    "BX1"!D!R(.6); p+(.75,0) \mycap{-.75}
    ?>; \makecoord{p}{(0,1)} **\dir{-}
    ?> *+!R{\scriptstyle \beta}
    ?>; p+(1,0) \mycap{.75}
    ?>; \makecoord{p}{(0,-1)} **\dir{-}
    ?> *+!U{\scriptstyle \beta^*},
    \endxy
    = \pi_V(\beta) \circ \rho_V.
    \qedhere
  \end{equation*}
\end{proof}

\begin{lemma}
  \label{lem:chi-1-2}
  For $\boldsymbol{\beta} = (\beta, j) \in \mathscr{J}(F, G)$ and $\mathbf{V} = (V, \rho_V) \in \mathcal{C}^Z$, we have
  \begin{equation*}
    \Phi(\boldsymbol{\beta})_{\mathbf{V}} = \chi^{\beta}_V \circ \pi_{V}(\beta) \circ \xi_V^F \circ F(\rho_{V}),
  \end{equation*}
  where $\xi^F: F Z \to Z F$ is the natural isomorphism introduced in Subsection~\ref{subsec:comon-desc-G-tilde}.
\end{lemma}
\begin{proof}
  This follows from equation \eqref{eq:Phi-beta-chi} and Lemma~\ref{lem:chi-1-1}.
\end{proof}

Let $\boldsymbol{\beta} = (\beta, j)$ and $\boldsymbol{\beta}' = (\beta', j')$ be elements of $\mathscr{J}(F, G)$ such that $\boldsymbol{\beta} \sim \boldsymbol{\beta}'$. By the definition of $\sim$, there is an isomorphism $c: \beta \to \beta'$ in $\mathcal{C}$ satisfying \eqref{eq:def-J-F-G-equiv}. We prove $\Phi(\boldsymbol{\beta}) = \Phi(\boldsymbol{\beta}')$ as follows: For all objects $\mathbf{V} \in \mathcal{Z}(\mathcal{C})$, we have
\begin{align*}
  \Phi(\boldsymbol{\beta}')_{\mathbf{V}}
  & = j'_{V} \circ \pi_{V}(\beta') \circ \xi^F_V \circ F(\rho_{V})
  \qquad \text{(by Lemma \ref{lem:chi-1-2})} \\
  & = j_X \circ (c^{-1} \otimes \id_{F(X)} \otimes c^*) \circ \pi_{V}(\beta') \circ \xi^F_V \circ F(\rho_{V}) \\
  & = j_X \circ \pi_{V}(\beta) \circ \xi^F_V \circ F(\rho_{V})
  \qquad \phantom{'} \text{(by the dinaturality of $\pi_{V}$)} \\
  & = \Phi(\boldsymbol{\beta})_{\mathbf{V}},
\end{align*}
where $\rho_V$ is the $Z$-coaction associated to the half-braiding of $\mathbf{V}$.

\subsection{Reduction to the case where $F$ is the identity}
\label{subsec:reduction}

We explain that it is sufficient to prove Theorem~\ref{thm:main-thm} in the case where $F = \id_{\mathcal{C}}$. We first note that
\begin{equation*}
  N: \cAut_{\otimes}(\mathcal{C})^{\op} \times \cAut_{\otimes}(\mathcal{C}) \to \Ens,
  \quad (F, G) \mapsto \Nat_{\otimes}(\widetilde{F}, \widetilde{G})
\end{equation*}
is a functor in an obvious way. We also note that
\begin{equation*}
  J: \cAut_{\otimes}(\mathcal{C})^{\op} \times \cAut_{\otimes}(\mathcal{C}) \to \Ens,
  \quad (F, G) \mapsto J(F, G)
\end{equation*}
is a functor. Indeed, if $\xi: F' \to F$ and $\zeta: G \to G'$ are morphisms in $\cAut_{\otimes}(\mathcal{C})$, then there is the map
\begin{equation*}
  J(\xi, \zeta): J(F, G) \to J(F', G'),
  \quad [(\beta, j)] \mapsto [(\beta, \zeta_{(-)}j_{(-)}I^{\beta}(\xi_{(-)}))]
\end{equation*}
The equation $J(\xi, \zeta) \circ J(\xi', \zeta') = J(\xi' \xi, \zeta \zeta')$ holds for all morphisms $\xi$, $\xi'$, $\zeta$ and $\zeta'$ in $\cAut_{\otimes}(\mathcal{C})$ whenever $\xi' \xi$ and $\zeta \zeta'$ are defined.

\begin{lemma}
  \label{lem:cat-prop-Phi-1}
  The map $[\boldsymbol{\beta}] \mapsto \Phi(\boldsymbol{\beta})$ is a natural transformation from $J$ to $N$.
\end{lemma}
\begin{proof}
  For an element $\boldsymbol{\beta} = (\beta, j) \in \mathscr{J}(F, G)$, an object $\mathbf{V} = (V, \sigma_V) \in \mathcal{Z}(\mathcal{C})$ and morphisms $\xi: F' \to F$ and $\zeta: G \to G'$ in $\cAut_{\otimes}(\mathcal{C})$, we have
  \begin{align*}
    \Phi(J(\xi, \zeta)(\boldsymbol{\beta}))_{\mathbf{V}}
    = {\knotholesize{10pt} \xy /r1pc/: (0,3)="T1"; p-(0,6)="T0",
    "T1" *+!D{\scriptstyle F'(V)}; p-(0, .25) **\dir{-}
    ?> \pushrect{1.5}{1.5}, \vtwist~{s0}{s1}{s2}{s3},
    s3 *+!U{\scriptstyle \xi_V} *\frm{-}="BX1",
    "BX1"!D; \makecoord{p}{s2-(0,1.5)} **\dir{-}
    ?> *+!U{\makebox[2.5pc]{$\scriptstyle j_V$}} *\frm{-}="BX2",
    "BX2"!D; \makecoord{p}{"T0"} **\dir{}
    ?(.55) *+!{\scriptstyle \zeta_V} *\frm{-}="BX3",
    "BX3"!U; "BX2"!D **\dir{-},
    "BX3"!D; \makecoord{p}{"T0"} **\dir{-}
    ?> *+!U{\scriptstyle G'(V)},
    "BX2"!U!L(.6)="P1",
    "BX2"!U!R(.6)="P2",
    s2; "P1" \mybend{.5},
    s1; p+(1,0) \mycap{.75}
    ?>; "P2" \mybend{.5},
    s2 *+!R{\scriptstyle \beta}
    \endxy}
    = {\knotholesize{8pt} \xy /r1pc/: (0,3)="T1"; p-(0,6)="T0",
    "T1" *+!D{\scriptstyle F'(V)};
    p-(0,.5) **\dir{-}
    ?> *+!U{\scriptstyle \xi_V} *\frm{-}="BX1",
    "BX1"!D \pushrect{1}{1}, \vtwist~{s0}{s1}{s2}{s3},
    s3; p-(0,.725) **\dir{-}
    ?> *+!U{\makebox[2.5pc]{$\scriptstyle j_V$}} *\frm{-}="BX2",
    "BX2"!D; \makecoord{p}{"T0"} **\dir{}
    ?(.55) *+!{\scriptstyle \zeta_V} *\frm{-}="BX3",
    "BX3"!U; "BX2"!D **\dir{-},
    "BX3"!D; \makecoord{p}{"T0"} **\dir{-}
    ?> *+!U{\scriptstyle G'(V)},
    "BX2"!U!L(.6)="P1",
    "BX2"!U!R(.6)="P2",
    s2; "P1" \mybend{.5},
    s1; p+(1,0) \mycap{.75}
    ?>; "P2" \mybend{.5}
    \endxy}
    = \widetilde{\zeta}_{\mathbf{V}} \circ \Phi(\boldsymbol{\beta})_{\mathbf{V}} \circ \widetilde{\xi}_{\mathbf{V}},
  \end{align*}
  where the second equality follows since $\xi_V \in \Hom_{\mathcal{Z}(\mathcal{C})}(\widetilde{F'}(\mathbf{V}), \widetilde{F}(\mathbf{V}))$. The proof is done.
\end{proof}

Now let $F$, $G$ and $H$ be objects of $\cAut_{\otimes}(\mathcal{C})$. There are maps
\begin{align}
  \label{eq:induced-map-H-1}
  J(F, G) & \to J(F H, G H),
  & [(\beta, j)] & \mapsto [(\beta, j_{H(-)})], \\
  \label{eq:induced-map-H-2}
  N(F, G) & \to N(F H, G H),
  & \xi & \mapsto \xi_{\widetilde{H}(-)}.
\end{align}
Since $H$ is a tensor autoequivalence, both these maps are bijective. Moreover, these maps are compatible with $\Phi$ in the following sense:

\begin{lemma}
  \label{lem:cat-prop-Phi-2}
  Let $F$, $G$ and $H$ be as above. The following diagram commutes\textup{:}
  \begin{equation*}
    \xymatrix@C=64pt{
      J(F, G)
      \ar[d]_{\Phi_{F, G}}
      \ar[r]^{\text{\eqref{eq:induced-map-H-1}}}
      & J(F H, G H)
      \ar[d]^{\Phi_{F H, G H}} \\
      N(F, G)
      \ar[r]^{\text{\eqref{eq:induced-map-H-2}}}
      & N(F H, G H)
    }
  \end{equation*}
\end{lemma}
\begin{proof}
  The proof is immediate from the definition of $\Phi$.
\end{proof}

Let $F$ and $G$ be as above, and let $\overline{F}$ be a quasi-inverse of $F$. Then $\overline{F}$ is also a tensor autoequivalence of $\mathcal{C}$ such that $\overline{F} F \cong \id_{\mathcal{C}} \cong F \overline{F}$ as monoidal functors. We choose an isomorphism $\eta: \id_{\mathcal{C}} \to F \overline{F}$ of monoidal functors. Then, by Lemmas \ref{lem:cat-prop-Phi-1} and \ref{lem:cat-prop-Phi-2}, we have the following commutative diagram:
\begin{equation*}
  \xymatrix@C=64pt{
    J(F, G)
    \ar[d]_{\Phi_{F, G}}
    \ar[r]^(.45){\text{\eqref{eq:induced-map-H-1} with $H = \overline{F}$}}
    & J(F \overline{F}, G \overline{F})
    \ar[d]^{\Phi_{F \overline{F}, G \overline{F}}}
    \ar[r]^{J(\eta, \id)}
    & J(\id_{\mathcal{C}}, G \overline{F})
    \ar[d]^{\Phi_{\id_{\mathcal{C}}, G \overline{F}}} \\
    N(F, G)
    \ar[r]^(.45){\text{\eqref{eq:induced-map-H-2} with $H = \overline{F}$}}
    & N(F \overline{F}, G \overline{F})
    \ar[r]^{N(\eta, \id)}
    & N(\id_{\mathcal{C}}, G \overline{F}) \\
  }
\end{equation*}
Since horizontal arrows in this diagram are bijections, we see that $\Phi_{F, G}$ is bijective if and only if $\Phi_{\id_{\mathcal{C}}, G \overline{F}}$ is bijective. Thus, to prove Theorem~\ref{thm:main-thm}, it is enough to consider the case where $F = \id_{\mathcal{C}}$.

\subsection{Technical lemmas}
\label{subsec:technical-lemmas}

We now prepare several technical lemmas to prove Theorem~\ref{thm:main-thm}. We fix a tensor autoequivalence $G$ of $\mathcal{C}$. We have introduced a natural isomorphism $\xi^G: G Z \to Z G$ in Subsection \ref{subsec:comon-desc-G-tilde}. 

\begin{lemma}
  \label{lem:xi-G-monoidal}
  $\xi^G$ is an isomorphism of monoidal functors from $\widetilde{G} R$ to $R G$.
\end{lemma}
\begin{proof}
  For any functors $E: \mathcal{C} \to \mathcal{C}$ and $F: \mathcal{Z}(\mathcal{C}) \to \mathcal{Z}(\mathcal{C})$, the map
  \begin{equation*}
    \Nat(U F, E U)
    \to \Nat(F R, R E);
    \quad \theta \mapsto R E(\varepsilon_{(-)}^{}) R(\theta_{R(-)}^{}) \eta_{F R(-)}^{}
  \end{equation*}
  is bijective. Since $U \dashv R$ is a monoidal adjunction, this restricts to the bijection
  \begin{equation}
    \label{eq:xi-G-monoidal-1}
    \Nat_{\otimes}(U F, E U) \to \Nat_{\otimes}(F R, R E)
  \end{equation}
  when both $E$ and $F$ are monoidal functors.

  Now we consider the case where $E = G$ and $F = \widetilde{G}$. Then, since $U \widetilde{G} = G U$ as monoidal functors, the identity natural transformation $\id_{G U}$ lives in the source of the map~\eqref{eq:xi-G-monoidal-1}. Let $\xi'$ be the monoidal natural transformation corresponding to $\id_{G U}$ via \eqref{eq:xi-G-monoidal-1}. By Lemma~\ref{lem:chi-1-2} and the definition of $R$, we have
  \begin{equation*}
    \widetilde{G}R(V) = (G Z(V), \xi^G_{Z(V)} G(\delta_{V}))
  \end{equation*}
  for all $V \in \mathcal{C}$. Thus $\xi'_{V}$ for $V \in \mathcal{C}$ is computed as follows:
  \begin{align*}
    \xi'_V
    = Z G(\varepsilon_V) \circ \xi^G_{Z(V)} \circ G(\delta_{V})
    = \xi^G_V \circ G Z(\varepsilon_V) \circ G(\delta_{V})
    = \xi^G_V \circ G(\id_{Z(V)}) = \xi^G_V.
  \end{align*}
  Here, the first equality follows from the definition of $\xi'$, the second from the naturality of $\xi$, and the third from the axiom of a comonad. Since $\xi'$ belongs to the set $\Nat_{\otimes}(\widetilde{G} R, R G)$, so does $\xi^G$.
\end{proof}

Let $\beta \in \mathcal{C}$ be an invertible object. We remark:

\begin{lemma}
  \label{lem:chi-RV}
  For all objects $V, X \in \mathcal{C}$, we have
  \begin{equation*}
    (\id_{\beta} \otimes \pi_V(X) \otimes \id_{\beta^*}) \circ \chi_{R(V)}^{\beta} = \pi_{V}(\beta \otimes X).
  \end{equation*}
\end{lemma}
\begin{proof}
  Since $R(V) = (Z(V), \delta_V)$, we compute
  \begin{align*}
    (\id_{\beta} \otimes \pi_V(X) \otimes \id_{\beta^*}) \circ \chi_{R(V)}^{\beta}
    & = (\id_{\beta} \otimes \pi_V(X) \otimes \id_{\beta^*}) \circ \pi_{Z(V)}(\beta) \circ \delta_V \\
    & = \pi_{V}(\beta \otimes X)
  \end{align*}
  for all $V, X \in \mathcal{C}$. Here, the first and the second equalities follow from Lemma~\ref{lem:chi-1-1} and equation~\eqref{eq:Hopf-comonad-Z-delta}, respectively.
\end{proof}

The following identity will be useful:

\begin{lemma}
  \label{lem:chi-2}
  For $V, X \in \mathcal{C}$ and $\boldsymbol{\beta} = (\beta, j) \in \mathscr{J}(\id_{\mathcal{C}}, G)$, we have
  \begin{equation}
    \label{eq:lem-chi-2}
    G(\pi_V(X)) \circ \Phi(\boldsymbol{\beta})_{R(V)} = j_{X \otimes V \otimes X^*} \circ \pi_{V}(\beta \otimes X).
  \end{equation}
\end{lemma}
\begin{proof}
  By Lemmas \ref{lem:chi-1-2} and \ref{lem:chi-RV}, we have
  \begin{align*}
    G(\pi_V(X)) \circ \Phi(\boldsymbol{\beta})_{R(V)}
    & = G(\pi_V(X)) \circ j_V \circ \pi_{Z(V)}(\beta) \circ \delta_V \\
    & = j_{X \otimes V \otimes X^*} \circ (\id_{\beta} \otimes \pi_V(X) \otimes \id_{\beta^*})
      \circ \pi_{Z(V)}(\beta) \circ \delta_V \\
    & = j_{X \otimes V \otimes X^*} \circ \pi_{V}(\beta \otimes X). \qedhere
  \end{align*}
\end{proof}

Let $\beta \in \mathcal{C}$ be an invertible object. We note that $(-) \otimes \beta: \mathcal{C} \to \mathcal{C}$ is an autoequivalence of $\mathcal{C}$. Thus, by Lemma~\ref{lem:end-equiv-twist}, we can define a morphism $\zeta^{\beta}_V: Z I^{\beta}(V) \to Z(V)$ in $\mathcal{C}$ for $V \in \mathcal{C}$ to be the unique morphism in $\mathcal{C}$ such that the diagram
\begin{equation*}
  \xymatrix@C=64pt{
    Z(V^{\beta})
    \ar[r]^(.35){\pi_{V^{\beta}}(X)} \ar[d]_{\zeta^{\beta}_V}
    & X \otimes V^{\beta} \otimes X^* \ar@{=}[d] \\
    Z(V)
    \ar[r]_(.35){\pi_{V}(X \otimes \beta)}
    & X \otimes \beta \otimes V \otimes (X \otimes \beta)^* \\
  }
\end{equation*}
commutes for all $X \in \mathcal{C}$. It is routine to check that $\zeta^{\beta}_V$ is an isomorphism in $\mathcal{C}$ and natural in the variable $V$.

\begin{lemma}
  \label{lem:zeta-monoidal}
  $\zeta^{\beta} \in \Nat_{\otimes}(R I^{\beta}, R)$.
\end{lemma}
\begin{proof}
  By abuse of notation, we write $\xi^{\beta} = \xi^{F}$ if $F = I^{\beta}$. Then the diagram
  \begin{equation*}
    \xymatrix@R=24pt@C=56pt{
      Z(V) \ar[r]^(.25){\pi_{V}(\beta \otimes X)}
      \ar@{}@<.5ex>[rd]|{\text{($\circlearrowleft$ by Lemma~\ref{lem:chi-RV})}}
      \ar[d]_{\chi^{\beta}_{R(V)}}
      & \beta \otimes X \otimes V \otimes (\beta \otimes X)^*
      \ar@{=}[d] \\
      Z(V)^{\beta} \ar[r]^(.25){(\pi_V(X))^{\beta}}
      \ar[d]_{\xi_{V}^{\beta}}
      \ar@{}@<.5ex>[rd]|{\text{($\circlearrowleft$ by the definition of $\xi^{\beta}$)}}
      & \beta \otimes X \otimes V \otimes X^* \otimes \beta
      \ar[d]^{\id_{\beta} \otimes \id_X \otimes \eval_{\beta}^{-1} \otimes \id_V \otimes \coev_{\beta^*} \otimes \id_{X^*} \otimes \id_{\beta^*}} \\
      Z(V^{\beta}) \ar[r]^(.25){\pi_{V^{\beta}}(X^{\beta})}
      & \beta \otimes X \otimes \beta^* \otimes \beta \otimes V \otimes \beta^* \otimes \beta^{**} \otimes X^* \otimes \beta^*
    }
  \end{equation*}
  commutes for all $V, X \in \mathcal{C}$ (see Lemma~\ref{lem:conj-beta-duality-trans} for the duality transformation of $I^{\beta}$). Thus, by the dinaturality of $\pi$ and $\coev_{\beta^*} = (\eval_{\beta})^*$, we have
  \begin{equation*}
    \pi_{V^{\beta}}(X^{\beta}) \circ \xi^{\beta}_V \circ \chi^{\beta}_{R(V)} = \pi_{V}(\beta \otimes X \otimes \beta^* \otimes \beta) = \pi_{V}(X^{\beta} \otimes \beta)
  \end{equation*}
  for all $V, X \in \mathcal{C}$. Since $I^{\beta}$ is an equivalence, we may replace $X^{\beta}$ in the above equation with $X$ and obtain the following equation:
  \begin{equation*}
    \pi_{V^{\beta}}(X) \circ \xi^{\beta}_V \circ \chi^{\beta}_{R(V)} = \pi_{V}(X \otimes \beta).
  \end{equation*}
  By comparing this equation with the definition of $\zeta^{\beta}$, we have
  \begin{equation*}
    (\zeta^{\beta})^{-1} = \xi^{\beta}_{(-)} \circ \chi^{\beta}_{R(-)} \in \Nat_{\otimes}(R, R I^{\beta})
  \end{equation*}
  and hence $\zeta^{\beta} \in \Nat_{\otimes}(R I^{\beta}, R)$.
\end{proof}

\subsection{Construction of the inverse}
\label{subsec:Phi-inv}

As we have discussed in Subsection~\ref{subsec:reduction}, we may assume $F = \id_{\mathcal{C}}$ to prove Theorem~\ref{thm:main-thm}. Thus, till the end of this section, we fix a tensor autoequivalence $G$ of $\mathcal{C}$. We introduce the following notation:

\begin{notation*}
  We fix a complete set $\{ \beta_1, \dotsc, \beta_n \}$ of representatives of isomorphism classes of invertible objects of $\mathcal{C}$. We define the sets $J_G$ and $N_G$ by
  \begin{equation*}
    J_G := \bigsqcup_{s = 1}^n \{ (\beta_s, j) \mid j \in \Nat_{\otimes}(I^{\beta_s}, G) \}
    \quad \text{and} \quad
    N_G := \Nat_{\otimes}(\id_{\mathcal{Z}(\mathcal{C})}, \widetilde{G}),
  \end{equation*}
  respectively, and define the map $\Phi_G: J_G \to N_G$ by $\boldsymbol{\beta} \mapsto \Phi(\boldsymbol{\beta})$.
\end{notation*}

The set $J(\id_{\mathcal{C}}, G)$ is identified with $J_G$. To prove Theorem~\ref{thm:main-thm}, it is sufficient to show that the map $\Phi_G$ is bijective. Now we construct the inverse of $\Phi_G$. Given $h \in N_G$, we associate an invertible object $\beta \in \{ \beta_1, \dotsc, \beta_n \}$ as follows: We consider the composition
\begin{equation}
  \label{eq:def-Psi-beta-0}
  \mathbf{A} = R(\unitobj)
  \xrightarrow{\quad h_{R(\unitobj)^{}} \quad}
  \widetilde{G} R(\unitobj)
  \xrightarrow{\quad \xi^{G}_{\unitobj} \quad}
  R G(\unitobj)
  \xrightarrow{\quad R(G_0^{-1}) \quad}
  R(\unitobj)
  = \mathbf{A}
\end{equation}
of morphisms in $\mathcal{Z}(\mathcal{C})$. Since $h$ is a monoidal natural transformation, and since so is $\xi^{G}$ by Lemma~\ref{lem:xi-G-monoidal}, the morphism \eqref{eq:def-Psi-beta-0} is a morphism of algebras in $\mathcal{Z}(\mathcal{C})$. Thus, by Lemma~\ref{lem:Alg-ZC-A-A}, there exists a unique invertible object $\beta \in \{ \beta_1, \dotsc, \beta_n \}$ such that the following two equivalent equations hold:
\begin{gather}
  \label{eq:def-Psi-beta-1}
  Z(G_0^{-1}) \circ \xi_{\unitobj}^F \circ h_{R(\unitobj)}
  = Z(\intch_{\beta}) \circ \delta_{\unitobj}, \\
  \label{eq:def-Psi-beta-2}
  \varepsilon_{\unitobj} \circ Z(G_0^{-1}) \circ \xi_{\unitobj}^F \circ h_{R(\unitobj)}
  = \intch_{\beta}
\end{gather}
By using $\beta$ defined in the above, we define $\phi: R I^{\beta} \to R G$ by
\begin{equation*}
  \phi_V: R(V^{\beta})
  \xrightarrow{\quad \zeta^{\beta}_V \quad}
  R(V)
  \xrightarrow{\quad h_{R(V)} \quad}
  \widetilde{F}R(V)
  \xrightarrow{\quad \xi^G_V \quad}
  R G(V)
\end{equation*}
for $V \in \mathcal{C}$. We recall that an object of the form $R(W)$ for some $W \in \mathcal{C}$ is a right $\mathbf{A}$-module ({\it i.e.}, a right Hopf module over $Z$) by the action $R_2(\unitobj, W)$.

\begin{claim}
  $\phi_V$ is an isomorphism of Hopf modules.
\end{claim}
\begin{proof}
  We set $p_V = \xi^{G}_V \circ h_{R(V)}$ and $q_V = \zeta^{\beta}_V$ for $V \in \mathcal{C}$. By Lemmas~\ref{lem:xi-G-monoidal} and~\ref{lem:zeta-monoidal}, the natural transformations $p: R \to R G$ and $q: R I^{\beta} \to R$ are monoidal. Thus the following diagram commutes:
  \begin{equation*}
    \xymatrix{
      R(V^{\beta}) \otimes \mathbf{A}
      \ar[rr]^(.475){\id \otimes R(\coev_{\beta})}
      \ar[d]_{q_V \otimes q'}
      \ar@{}[rrd]|{(q' := q_{\unitobj} \circ R(\coev_{\beta}))}
      & &
      R(V^{\beta}) \otimes R(\unitobj^{\beta})
      \ar[rr]^(.6){(R I^{\beta})_2}
      \ar[d]^{q_V \otimes q_{\unitobj}}
      & &
      R(V^{\beta}) \ar[d]^{q_V} \\
      R(V) \otimes \mathbf{A}
      \ar@{=}[rr]
      \ar[d]_{p_V \otimes p'}
      \ar@{}[rrd]|{(p' := R(G_0^{-1}) \circ p_{\unitobj})}
      & &
      R(V) \otimes R(\unitobj)
      \ar[rr]^(.6){R_2}
      \ar[d]^{p_V \otimes p_{\unitobj}}
      & &
      R(V) \ar[d]^{p_V} \\
      R G(V) \otimes \mathbf{A}
      \ar[rr]_(.475){\id \otimes R(G_0)}
      & &
      R G(V) \otimes R G(\unitobj)
      \ar[rr]_(.6){(R G)_2}
      & & R G(V)
    }
  \end{equation*}
  By the definition of monoidal functors, the composite morphisms along the top row and the bottom row are $R_2(V^{\beta}, \unitobj)$ and $R_2(G(V), \unitobj)$, respectively. Thus we get the following commutative diagram:
  \begin{equation*}
    \xymatrix{
      R(V^{\beta}) \otimes \mathbf{A}
      \ar@{=}[r]
      \ar[d]_{\phi_V \otimes p' q'}
      & R(V^{\beta}) \otimes R(\unitobj)
      \ar[rr]^(.575){R_2(V^{\beta}, \unitobj)}
      & & R(V^{\beta})
      \ar[d]^{\phi_V} \\
      R G(V) \otimes \mathbf{A}
      \ar@{=}[r]
      & R G(V) \otimes R(\unitobj)
      \ar[rr]_(.575){R_2(G(V), \unitobj)}
      & & R G(V)
    }
  \end{equation*}
  To show that $\phi_V: R(V^{\beta}) \to R G(V)$ is an isomorphism of right Hopf modules, it suffices to show that $p' q'$ is the identity. By the definition of $\beta$, we have
  \begin{equation*}
    p' = R(G_0^{-1}) \circ \xi^G_{\unitobj} \circ h_{R(\unitobj)}
    \mathop{=}^{\text{\eqref{eq:def-Psi-beta-1}}}
    Z(\intch_{\beta}) \circ \delta_{\unitobj}
    \mathop{=}^{\text{\eqref{eq:def-character}}}
    Z(\coev_{\beta}^{-1}) \circ Z(\pi_{\unitobj}(\beta)) \circ \delta_{\unitobj}.
  \end{equation*}
  Hence we compute
  \begin{align*}
    \pi_{\unitobj}(X) \circ p' \circ q'
    & = \pi_{\unitobj}(X)
      \circ Z(\coev_{\beta}^{-1}) \circ Z(\pi_{\unitobj}(\beta)) \circ \delta_{\unitobj}
      \circ \zeta^{\beta}_{\unitobj} \circ Z(\coev_{\beta}) \\
    & = (\id_{X} \otimes \coev_{\beta}^{-1} \otimes \id_{X^*}) \\
    & \qquad \circ (\id_{X} \otimes \pi_{\unitobj}(\beta) \otimes \id_{X^*})
      \circ \pi_{Z(\unitobj)}(X) \circ \delta_{\unitobj}
      \circ \zeta^{\beta}_{\unitobj} \circ Z(\coev_{\beta}) \\
    & = (\id_{X} \otimes \coev_{\beta}^{-1} \otimes \id_{X^*})
      \circ \pi_{\unitobj}(X \otimes \beta) \circ \zeta^{\beta}_{\unitobj} \circ Z(\coev_{\beta}) \\
    & = (\id_{X} \otimes \coev_{\beta}^{-1} \otimes \id_{X^*})
      \circ \pi_{\unitobj^{\beta}}(X) \circ Z(\coev_{\beta}) \\
    & = \pi_{\unitobj}(X) \circ Z(\coev_{\beta}^{-1}) \circ Z(\coev_{\beta})
      = \pi_{\unitobj}(X)
  \end{align*}
  for $X \in \mathcal{C}$. Here, the second and the fifth equalities follow from the naturality of $\pi_{(-)}(X)$, the third from \eqref{eq:Hopf-comonad-Z-delta}, and the fourth from the definition of $\zeta^{\beta}$. By the universal property of $\pi_{\unitobj}$, we obtain $p' q' = \id_{Z(\unitobj)}$. The proof is done.
\end{proof}

By the fundamental theorem for Hopf modules (Lemma~\ref{lem:FTHM}), there is a unique morphism $j_V: V^{\beta} \to G(V)$ such that
\begin{equation}
  \label{eq:def-Psi-j}
  Z(j_V) = \phi_V = \xi^{G}_V \circ h_{R(V)} \circ \zeta^{\beta}_V.
\end{equation}
The family $j = \{ j_V \}_{V \in \mathcal{C}}$ is in fact a natural isomorphism $j: I^{\beta} \to G$, since $\phi_V$ is invertible for all $V \in \mathcal{C}$ and natural in $V \in \mathcal{C}$. Moreover, we have:

\begin{claim}
  $j \in \Nat_{\otimes}(I^{\beta}, G)$.
\end{claim}
\begin{proof}
  Let $H: \mathcal{C} \to \mathcal{Z}(\mathcal{C})_{\mathbf{A}}$ be the monoidal equivalence given by Lemma~\ref{lem:FTHM}.  We set $F = I^{\beta}$ for simplicity. As we have seen, $\phi: R F \to R G$ is a monoidal natural transformation such that $\phi_V$ is an isomorphism of Hopf modules for all $V \in \mathcal{C}$. From this, we see that $\phi: H G \to H F$ is in fact a monoidal natural transformation. Hence $j: F \to G$ is also a monoidal natural transformation.
\end{proof}

We now define the map $\Psi_G: N_G \to J_G$ by $\Psi_G(h) = (\beta, j)$, where $\beta$ and $j$ are constructed from $h$ in the above. In the rest of this section, we prove that both $\Phi_G \Psi_G$ and $\Psi_G \Phi_G$ are identity maps.

\subsection{Computation of $\Psi_G \Phi_G$}
\label{subsec:compute-Psi-Phi}

We prove that $\Psi_G \Phi_G$ is the identity map. Let $\boldsymbol{\beta} = (\beta, j)$ be an element of $J_G$, and set
\begin{equation*}
  h = \Phi_G(\boldsymbol{\beta})
  \quad \text{and} \quad
  (\beta', j') = \Psi_G(h).
\end{equation*}
We first show $\beta = \beta'$. We recall that $\beta$ and $\beta'$ belong to the set $\{ \beta_1, \dotsc, \beta_n \}$ of representatives of isomorphism classes of invertible objects. Thus, by Lemma~\ref{lem:Alg-C-A-1}, it is enough to prove:

\begin{claim}
  $\intch_{\beta} = \intch_{\beta'}$.  
\end{claim}
\begin{proof}
  We first remark the following equation:
  \begin{equation}
    \label{eq:Psi-Phi-claim-1-1}
    G_0^{-1} \circ G(\pi_{\unitobj}(\unitobj))
    = \pi_{\unitobj}(\unitobj) \circ Z(G_0^{-1}) \circ \xi^{G}_{\unitobj}
  \end{equation}
  This follows from the following commutative diagram:
  \begin{equation*}
    \xymatrix{
      G Z(\unitobj)
      \ar@{}[rrrd]|{\text{($\circlearrowleft$ by the definition of $\xi^G$)}}
      \ar[d]_{\xi^G_{\unitobj}}
      \ar[rrr]^(.375){G(\pi_{\unitobj}(\unitobj))}
      & & & G(\unitobj \otimes \unitobj \otimes \unitobj^*)
      \ar[d]^{\text{\eqref{eq:xi-G-def-can-iso} with $F = G$}}
      \ar@{=}[r] & G(\unitobj) \ar[dd]^{G_0^{-1}}\\
      Z G(\unitobj)
      \ar@{}[rrrd]|{\text{($\circlearrowleft$ by the (di)naturality of $\pi$})}
      \ar[d]_{Z(G_0^{-1})}
      \ar[rrr]^(.375){\pi_{G(\unitobj)}(G(\unitobj))}
      & & & G(\unitobj) \otimes G(\unitobj) \otimes G(\unitobj)^*
      \ar[d]^{G_0^{-1} \otimes G_0^{-1} \otimes G_0^*} \\
      Z(\unitobj)
      \ar[rrr]_(.375){\pi_{\unitobj}(\unitobj)}
      & & & \unitobj \otimes \unitobj \otimes \unitobj^*
      \ar@{=}[r] & \unitobj,
    }
  \end{equation*}
  Now the claim is proved as follows:
  \begin{align*}
    \intch_{\beta'}
    & \mathop{=}^{\text{\eqref{eq:def-Psi-beta-2}}}
    \pi_{\unitobj}(\unitobj) \circ Z(G_0^{-1}) \circ \xi^{G}_{\unitobj} \circ h_{R(\unitobj)} \\
    & \mathop{=}^{\text{\eqref{eq:Psi-Phi-claim-1-1}}}
    G_0^{-1} \circ G(\pi_{\unitobj}(\unitobj)) \circ \Phi(\boldsymbol{\beta})_{R(\unitobj)} \\
    & \mathop{=}^{\text{\eqref{eq:lem-chi-2}}}
    G_0^{-1} \circ j_{\unitobj \otimes \unitobj \otimes \unitobj^*} \circ \pi_{V}(\beta \otimes \unitobj)
      = \coev_{\beta}^{-1} \circ \pi_{\unitobj}(\beta \otimes \unitobj) = \intch_{\beta}.
  \end{align*}
  Here, the fourth equality holds since $j \in \Nat_{\otimes}(I^{\beta}, G)$ and $I^{\beta}_0 = \coev_{\beta}$.
\end{proof}

\begin{claim}
  $j = j'$.
\end{claim}
\begin{proof}
  We fix $V, X \in \mathcal{C}$ and consider the following commutative diagram:
  \begin{equation*}
    \xymatrix@C=60pt{
      Z(V^{\beta})
      \ar@{}@<.5ex>[rd]|{\text{($\circlearrowleft$ by the definition of $\zeta^{\beta}$})}
      \ar[d]_{\zeta^{\beta}_V}
      \ar[r]^(.275){\pi_{V^{\beta}}(X^{\beta})}
      & X^{\beta} \otimes V^{\beta} \otimes (X^{\beta})^*
      \ar@{=}[d] \\
      Z(V)
      \ar@{}@<.5ex>[rd]|{\text{($\circlearrowleft$ by the dinaturality of $\pi$})}
      \ar@{=}[d]
      \ar[r]^(.275){\pi_V(\beta \otimes X \otimes \beta^* \otimes \beta)}
      & \beta \otimes X \otimes \beta^* \otimes \beta \otimes V \otimes \beta^*
      \otimes \beta^{**} \otimes X^{*} \otimes \beta^*
      \ar[d]^{\id \otimes \id \otimes \eval_{\beta} \otimes \id \otimes \coev_{\beta^*} \otimes \id \otimes \id}\\
      Z(V)
      \ar@{}@<.5ex>[rd]|{\text{($\circlearrowleft$ by Lemma~\ref{lem:chi-2})}}
      \ar[d]_{h_{R(V)}}
      \ar[r]^(.275){\pi_V(\beta \otimes X)}
      & (X \otimes V \otimes X^{*})^{\beta}
      \ar[d]^{j_{X \otimes V \otimes X^*}} \\
      F Z(V)
      \ar@{}@<.5ex>[rd]|{\text{($\circlearrowleft$ by the definition of $\xi^F$})}
      \ar[d]_{\xi^F_V}
      \ar[r]^(.275){G(\pi_V(X))}
      & G(X \otimes V \otimes X^*)
      \ar[d]^{\text{\eqref{eq:xi-G-def-can-iso} with $F = G$}} \\
      Z G(V)
      \ar[r]^(.275){\pi_{G(V)}(G(X))}
      & G(X) \otimes G(V) \otimes G(X)^*. \\
    }
  \end{equation*}
  By~\eqref{eq:def-Psi-j}, the composition along the first column is $Z(j'_V)$. By Lemma~\ref{lem:mon-nat-inv}, the composition along the second column is $j_X \otimes j_V \otimes (j_X^{-1})^*$. Thus we have
  \begin{align*}
    \pi_{G(V)}(G(X)) \circ Z(j'_V)
    & = (j_X \otimes j_V \otimes (j_X^{-1})^*) \circ \pi_{V^{\beta}}(X^{\beta}) \\
    & = (\id_{G(X)} \otimes j_V \otimes \id_{G(X)}) \circ \pi_{V^{\beta}}(G(X)) \\
    & = \pi_{G(V)}(G(X)) \circ Z(j_V).
  \end{align*}
  Since $G: \mathcal{C} \to \mathcal{C}$ is an equivalence, we have $Z(j_V) = Z(j'_V)$ by the universal property of $\pi_{G(V)}$. Since $Z$ is faithful, we have $j_V = j'_V$.
\end{proof}

Therefore $\Psi_G \Phi_G$ is the identity map.

\subsection{Computation of $\Phi_G \Psi_G$}
\label{subsec:compute-Phi-Psi}

Finally, we show that the composition $\Phi_G \Psi_G$ is the identity map. We fix $h \in N_G$ and set
\begin{equation*}
  \boldsymbol{\beta} = (\beta, j) = \Psi_G(h)
  \quad \text{and} \quad
  h' = \Phi_G(\boldsymbol{\beta}).
\end{equation*}

\begin{claim}
  $h_{R(V)} = h'_{R(V)}$ for all objects $V \in \mathcal{C}$.
\end{claim}
\begin{proof}
  We fix $V \in \mathcal{C}$. Since $G$ is an equivalence, we have
  \begin{equation*}
    G Z(V) = \int_{X \in \mathcal{C}} G(X \otimes V \otimes X^*)
  \end{equation*}
  with the universal dinatural transformation $G(\pi_{V}(-))$. Thus, to prove this claim, it is sufficient to show that the equation
  \begin{equation*}
    G(\pi_V(X)) \circ h_{R(V)} = G(\pi_V(X)) \circ h'_{R(V)}
  \end{equation*}
  holds for all objects $X \in \mathcal{C}$. For simplicity, we set
  \begin{equation*}
    p_X := (j_X \otimes \id_{\beta}) \circ (\id_{\beta} \otimes \id_X \otimes \eval_{\beta}^{-1})
  \end{equation*}
  for $X \in \mathcal{C}$. Then we have the the following commutative diagram:
  \begin{equation*}
    \xymatrix{
      Z(V)
      \ar@{}@<.5ex>[rrrd]|{\text{($\circlearrowleft$ by the dinaturality of $\pi_V(X)$ in $X$})}
      \ar@{=}[d]
      \ar[rrr]^(.3){\pi_{V}(\beta \otimes X)}
      & & & \beta \otimes X \otimes V \otimes X^* \otimes \beta^*
      \ar[d]^{p_X \otimes \id_V \otimes (p^{-1})^*} \\
      Z(V)
      \ar@{}@<.5ex>[rrrd]|{\text{($\circlearrowleft$ by the definition of $\zeta^{\beta}$})}
      \ar[d]_{(\zeta_V^{\beta})^{-1}}
      \ar[rrr]^(.3){\pi_{V}(G(X) \otimes \beta)}
      & & & G(X) \otimes \beta \otimes V \otimes \beta^* \otimes G(X)^*
      \ar@{=}[d] \\
      Z(V^{\beta})
      \ar@{}@<.5ex>[rrrd]|{\text{($\circlearrowleft$ by the naturality of $\pi_V(X)$ in $V$})}
      \ar[d]_{Z(j_V)}
      \ar[rrr]^(.3){\pi_{V^{\beta}}(G(X))}
      & & & G(X) \otimes \beta \otimes V \otimes \beta^* \otimes G(X)^*
      \ar[d]^{\id \otimes j_V \otimes \id} \\
      Z G(V)
      \ar@{}@<.5ex>[rrrd]|{\text{($\circlearrowleft$ by the definition of $\xi^F$})}
      \ar[d]_{(\xi^F_V)^{-1}}
      \ar[rrr]^(.3){\pi_{G(V)}(G(X))}
      & & & G(X) \otimes G(V) \otimes G(X)^*
      \ar[d]^{\text{\eqref{eq:xi-G-def-can-iso} with $F = G$}} \\
      F Z(V)
      \ar[rrr]^(.3){G(\pi_V(X))}
      & & & G(X \otimes V \otimes X^*)
    }
  \end{equation*}
  By Lemma~\ref{lem:mon-nat-inv}, the composition along the right column is $j_{X \otimes V \otimes X^*}$. Thus, by the above commutative diagram, we have
  \begin{gather*}
    G(\pi_{V}(X)) \circ h_{R(V)}
    \mathop{=}^{\text{\eqref{eq:def-Psi-j}}} G(\pi_{V}(X)) \circ (\xi_V^F)^{-1} \circ h_{R(V)} \circ (\zeta_V^{\beta})^{-1} \\
    = j_{X \otimes V \otimes X^*} \circ \pi_{V}(\beta \otimes X)
    \mathop{=}^{\text{\eqref{eq:lem-chi-2}}}
    G(\pi_{V}(X)) \circ h'_{R(V)}
  \end{gather*}
  for all objects $X \in \mathcal{C}$. The proof is done.
\end{proof}

Since every component of the unit of the adjunction $U \dashv R$ is a monomorphism, every object $\mathbf{X} \in \mathcal{Z}(\mathcal{C})$ fits into an exact sequence of the form $0 \to \mathbf{X} \to R(V) \to R(W)$ in $\mathcal{Z}(\mathcal{C})$. Thus, by the above claim and the standard argument, we obtain $h_{\mathbf{X}} = h'_{\mathbf{X}}$ for all objects $\mathbf{X} \in \mathcal{Z}(\mathcal{C})$. Therefore $\Phi_G \Psi_G$ is the identity map. We have completed the proof of Theorem~\ref{thm:main-thm}.

\section{Further applications and remarks}
\label{sec:applications}


\subsection{Functorial property of $\Phi$}

Let $\mathcal{C}$ be a finite tensor category. For tensor autoequivalences $F$ and $G$ of $\mathcal{C}$, we define $\mathscr{J}(F, G)$ and $J(F, G)$ in the same way as Section~\ref{sec:main-result}. The main result of Section~\ref{sec:main-result} states that there is a well-defined bijection
\begin{equation*}
  \Phi_{F, G}: \mathscr{J}(F, G) \to \Nat_{\otimes}(\widetilde{F}, \widetilde{G}), \quad [\boldsymbol{\beta}] \mapsto \Phi(\boldsymbol{\beta}).
\end{equation*}
Let $H$ be another tensor autoequivalence of $\mathcal{C}$. For $\boldsymbol{\alpha} = (\alpha, i) \in \mathscr{J}(G, H)$ and $\boldsymbol{\beta} = (\beta, j) \in \mathscr{J}(F, G)$, we define their tensor product by $\boldsymbol{\alpha} \otimes \boldsymbol{\beta} = (\alpha \otimes \beta, h)$, where
\begin{equation}
  \label{eq:functorial-Phi-1}
  h = \left(
    I^{\alpha \otimes \beta} F
    \xrightarrow{\quad \id \quad}
    I^{\alpha} I^{\beta} F
    \xrightarrow{\quad I^{\alpha}(j) \quad}
    I^{\alpha} G
    \xrightarrow{\quad i \quad}
    H
  \right).
\end{equation}
It is easy to see that the following operation is well-defined:
\begin{equation}
  \label{eq:functorial-Phi-2}
  J(G, H) \times J(F, G) \to J(F, H),
  \quad ([\boldsymbol{\alpha}], [\boldsymbol{\beta}]) \mapsto [\boldsymbol{\alpha} \otimes \boldsymbol{\beta}]
\end{equation}

\begin{lemma}
  \label{lem:Phi-functorial}
  For $\boldsymbol{\alpha} = (\alpha, i) \in \mathscr{J}(G, H)$ and $\boldsymbol{\beta} = (\beta, j) \in \mathscr{J}(F, G)$, we have
  \begin{equation*}
    \Phi(\boldsymbol{\alpha} \otimes \boldsymbol{\beta}) = \Phi(\boldsymbol{\alpha}) \circ \Phi(\boldsymbol{\beta}).
  \end{equation*}
\end{lemma}

Namely, the following diagram is commutative:
\begin{equation*}
  \xymatrix@C=64pt{
    J(G, H) \times J(F, G)
    \ar[r]^(.6){\text{\eqref{eq:functorial-Phi-2}}}
    \ar[d]_{\Phi_{G,H} \times \Phi_{F,G}}
    & J(F, H) \ar[d]^{\Phi_{F,H}} \\
    \Nat_{\otimes}(\widetilde{G}, \widetilde{H})
    \times \Nat_{\otimes}(\widetilde{F}, \widetilde{G})
    \ar[r]^(.6){\text{composition}}
    & \Nat_{\otimes}(\widetilde{F}, \widetilde{H}).
  }
\end{equation*}

\begin{proof}
  For all $\mathbf{V} = (V, \sigma_V) \in \mathcal{Z}(\mathcal{C})$, we have
  \begin{equation*}
    \Phi(\boldsymbol{\alpha})_{\mathbf{V}}
    \circ \Phi(\boldsymbol{\beta})_{\mathbf{V}}
    = \!\!\!\!\! {\knotholesize{6pt} \xy /r1pc/:
      (0,3.5)="P1", (0,-3.5)="P2",
      "P1" *+!D{\scriptstyle {F(V)}},
      "P1"; p-(0, .5) **\dir{-}
      ?> *+!U{\makebox[2pc]{$\scriptstyle {\Phi(\boldsymbol{\beta})_{\mathbf{V}}}$}} *\frm{-}="BX1",
      "BX1"!D; p-(0, 1.25) **\dir{-} ?> \pushrect{1}{1.5},
      \vtwist~{s0}{s1}{s2}{s3},
      s1; p+(.75,0) \mycap{.75}
      ?>; \makecoord{p}{s2} **\dir{-},
      s3 *+!U{\makebox[2pc]{$\scriptstyle {i_V}$}} *\frm{-}="BX2",
      "BX2"!D; \makecoord{p}{"P2"} **\dir{-}
      ?> *+!U{\scriptstyle {H(V)}},
      s0+(-.75, .75)="T3",
      "BX2"!D!R+(.5, -.5)="T4",
      "T3"; "T4" **\frm{.}, "T4" *+{},
      \makecoord{"T3"}{"T4"} *+!U{\scriptstyle {\Phi(\boldsymbol{\alpha})_{\mathbf{V}}}}
      \endxy}
    = \!\!\! {\knotholesize{8pt} \xy /r1pc/:
      (0,3.5)="P1", (0,-3.5)="P2",
      "P1" *+!D{\scriptstyle {F(V)}},
      "P1"; p-(0, .75) **\dir{-} ?> \pushrect{2}{2}
      \vtwist~{s0}{s1}{s2}{s3},
      s3 *+!U{\makebox[2pc]{$\scriptstyle \Phi(\boldsymbol{\beta})_{\mathbf{V}}$}} *\frm{-}="BX1",
      "BX1"!D; p-(0,1.25) **\dir{-}
      ?> *+!U{\makebox[2pc]{$\scriptstyle {i_V}$}} *\frm{-}="BX2",
      "BX2"!U!L(.5)="T1",
      s2; \makecoord{p}{"BX1"!D} **\dir{-}
      ?>; "T1" \mybend{.6},
      "BX2"!U!R(.5)="T2",
      s1; p+(2,0) \mycap{.75}
      ?>; \makecoord{p}{"BX1"!D} **\dir{-}
      ?>; "T2" \mybend{.6},
      "BX2"!D; \makecoord{p}{"P2"} **\dir{-}
      ?> *+!U{\scriptstyle {H(V)}}
      \endxy}
    \,\, = \!\!\!\!\! {\knotholesize{6pt} \xy /r1pc/:
      (0,3.5)="P1", (0,-3.5)="P2",
      "P1" *+!D{\scriptstyle {F(V)}},
      "P1"; p-(0,1) **\dir{-} ?> \pushrect{1}{1},
      \vtwist~{s0}{s1}{s2}{s3},
      s3 \pushrect{.75}{1},
      \vtwist~{s0}{s1}{s2}{s3},
      s1; p+(.75,0) \mycap{.75}
      ?>; \makecoord{p}{s2} **\dir{-},
      s3 *+!U{\makebox[2pc]{$\scriptstyle {j_V}$}} *\frm{-}="BX1",
      "BX1"!D; p-(0,1.25) **\dir{-}
      ?> *+!U{\makebox[2pc]{$\scriptstyle {i_V}$}} *\frm{-}="BX2",
      "BX2"!U!L(.5)="T1",
      s6; \makecoord{p}{"BX1"!D} **\dir{-}
      ?>; "T1" \mybend{.6},
      "BX2"!U!R(.5)="T2",
      s5; p+(2.5,0) \mycap{.75}
      ?>; \makecoord{p}{"BX1"!D} **\dir{-}
      ?>; "T2" \mybend{.6},
      "BX2"!D; \makecoord{p}{"P2"} **\dir{-}
      ?> *+!U{\scriptstyle {H(V)}}
      \endxy}
    = \Phi(\boldsymbol{\alpha} \otimes \boldsymbol{\beta})_{\mathbf{V}}.
  \end{equation*}
  Here, the second equality follows from that $\Phi(\boldsymbol{\beta})_{\mathbf{V}}$ is a morphism in $\mathcal{Z}(\mathcal{C})$, and the others follow from the definition of $\Phi$. The proof is done.
\end{proof}

For a finite tensor category $\mathcal{B}$, we set $\Aut_{\otimes}(\id_{\mathcal{B}}) := \Nat_{\otimes}(\id_{\mathcal{B}}, \id_{\mathcal{B}})$. Since $\mathcal{B}$ is rigid, the set $\Aut_{\otimes}(\id_{\mathcal{B}})$ is in fact a group with respect to the composition (see
Subsection \ref{lem:mon-nat-inv}). We also denote by $\Inv(\mathcal{B})$ the group of isomorphism classes of invertible objects of $\mathcal{B}$. By using our main result, we obtain:

\begin{theorem}
  \label{thm:Aut-id-ZC}
  $\Inv(\mathcal{Z}(\mathcal{C})) \cong \Aut_{\otimes}(\id_{\mathcal{Z}(\mathcal{C})})$ as groups.
\end{theorem}
\begin{proof}
  Let $\boldsymbol{\beta} = (\beta, \sigma_{\beta}) \in \mathcal{Z}(\mathcal{C})$ be an invertible object. We note that $\beta \in \mathcal{C}$ is an invertible object. We define $j \in \Nat(I^{\beta}, \id_{\mathcal{C}})$ by
  \begin{equation*}
    j_X = \left(
      \beta \otimes X \otimes \beta^*
      \xrightarrow{\quad \sigma_{\beta}(X) \quad}
      X \otimes \beta \otimes \beta^*
      \xrightarrow{\quad \id_X \otimes \coev_{\beta}^{-1} \quad}
      X \right)
  \end{equation*}
  for $X \in \mathcal{C}$. Then $j$ is a monoidal natural transformation. To see this, we note
  \begin{equation}
    \label{eq:inv-ZC-aut-id-pf-1}
    \coev_{\beta}^{-1} \otimes \id_{\beta}
    = (\id_{\beta} \otimes \eval_{\beta})(\coev_{\beta} \otimes \id_{\beta})
    (\coev_{\beta}^{-1} \otimes \id_{\beta})
    = \id_{\beta} \otimes \eval_{\beta}.
  \end{equation}
  Set $c = \coev_{\beta}^{-1}$ for simplicity. By graphical calculus, we compute
  \begin{align*}
    j_X \otimes j_Y
    & = {\knotholesize{8pt} \xy /r1pc/:
      (0,2)="P1"; p-(0,4)="P2",
      "P1" \pushrect{1}{1}, \vtwist~{s0}{s1}{s2}{s3},
      s0 *+!D{\scriptstyle \beta},
      s1 *+!D{\scriptstyle X_{}},
      s2; "P2" **\dir{-} ?> *+!U{\scriptstyle X_{}},
      s3; p+(1,0) **\dir{} ?(.5)="T1" ?>="T2",
      "T1" *+!U{\makebox[1.5pc]{$\scriptstyle c$}} *\frm{-},
      "T2"; \makecoord{p}{"P1"} **\dir{-}
      ?> *+!D{\scriptstyle \,\,\beta\smash{{}^*}},
      (3.25,2)="P1"; p-(0,4)="P2",
      "P1" \pushrect{1}{1}, \vtwist~{s0}{s1}{s2}{s3},
      s0 *+!D{\scriptstyle \beta},
      s1 *+!D{\scriptstyle Y_{}},
      s2; "P2" **\dir{-} ?> *+!U{\scriptstyle Y_{}},
      s3; p+(1,0) **\dir{} ?(.5)="T1" ?>="T2",
      "T1" *+!U{\makebox[1.5pc]{$\scriptstyle c$}} *\frm{-},
      "T2"; \makecoord{p}{"P1"} **\dir{-}
      ?> *+!D{\scriptstyle \,\,\beta\smash{{}^*}}
      \endxy}
      = {\knotholesize{8pt} \xy /r1pc/:
      (0,2)="P1"; p-(0,4)="P2",
      "P1" \pushrect{1}{1}, \vtwist~{s0}{s1}{s2}{s3},
      s0 *+!D{\scriptstyle \beta},
      s1 *+!D{\scriptstyle X_{}},
      s2; "P2" **\dir{-} ?> *+!U{\scriptstyle X_{}},
      s3; p+(1,0) **\dir{} ?(.5)="T1" ?>="T2",
      "T1" *+!U{\makebox[1.5pc]{$\scriptstyle c$}} *\frm{-}="BX1",
      "T2"; \makecoord{p}{"P1"} **\dir{-}
      ?> *+!D{\scriptstyle \,\,\beta\smash{{}^*}},
      (3.25,2)="P1"; p-(0,4)="P2",
      "P1"; p-(0,2) **\dir{-}
      ?> \pushrect{1}{1}, \vtwist~{s0}{s1}{s2}{s3},
      "P1" *+!D{\scriptstyle \beta},
      s1; \makecoord{p}{"P1"} **\dir{-}
      ?> *+!D{\scriptstyle Y_{}},
      s2; "P2" **\dir{-} ?> *+!U{\scriptstyle Y_{}},
      s3; p+(1,0) **\dir{} ?(.5)="T1" ?>="T2",
      "T1" *+!U{\makebox[1.5pc]{$\scriptstyle c$}} *\frm{-},
      "T2"; \makecoord{p}{"P1"} **\dir{-}
      ?> *+!D{\scriptstyle \,\,\beta\smash{{}^*}},
      "BX1"!U!L+(-.25, .25)="T1", s0+(.25, 0)="T2",
      "T1"; "T2" **\frm{.}
      \endxy}
      \mathop{=}^{\text{\eqref{eq:inv-ZC-aut-id-pf-1}}}
      {\knotholesize{8pt} \xy /r1pc/:
      (0,2)="P1"; p-(0,4)="P2",
      "P1" \pushrect{1}{1}, \vtwist~{s0}{s1}{s2}{s3},
      s0 *+!D{\scriptstyle \beta},
      s1 *+!D{\scriptstyle X_{}},
      s2; "P2" **\dir{-} ?> *+!U{\scriptstyle X_{}},
      "P1"+(2,0); p+(1,0) \mycap{-.75}
      ?< *+!D{\scriptstyle \,\,\beta\smash{{}^*}},
      ?> *+!D{\scriptstyle \beta}
      ?>+(1,0)="P3",
      "P3" *+!D{\scriptstyle Y_{}},
      "P3"; p-(0,1) **\dir{-}
      ?>-(3,0) \pushrect{3}{2}, \vtwist~{s0}{s1}{s2}{s3},
      s3; p+(1,0) **\dir{} ?(.5)="T1" ?>="T2",
      "T1" *+!U{\makebox[1.5pc]{$\scriptstyle c$}} *\frm{-},
      "T2"; \makecoord{p}{"P1"} **\dir{-}
      ?> *+!D{\scriptstyle \,\,\beta\smash{{}^*}},
      s2; "P2"+(1,0) \mybend{.75} ?> *+!U{\scriptstyle Y}
      \endxy} \\
    & = j_{X \otimes Y} \circ I_2^{\beta}(X, Y)
  \end{align*}
  for all $X, Y \in \mathcal{C}$. The equation $j_{\unitobj} \circ I_{0}^{\beta} = \id_{\unitobj}$ is trivial. Thus $j \in \Nat_{\otimes}(I^{\beta}, \id_{\mathcal{C}})$.

  Let $\boldsymbol{\alpha}$ and $\boldsymbol{\beta}$ be invertible objects of $\mathcal{Z}(\mathcal{C})$, and let $i: I^{\alpha} \to \id_{\mathcal{C}}$ and $j: I^{\beta} \to \id_{\mathcal{C}}$ be the monoidal natural transformations associated to $\boldsymbol{\alpha}$ and $\boldsymbol{\beta}$, respectively. Then the monoidal natural transformation $h$ associated to $\boldsymbol{\alpha} \otimes \boldsymbol{\beta}$ is given by
  \begin{align*}
    h_X
    & = {\knotholesize{8pt} \xy /r1pc/:
      (0,2)="P1"; p-(0,4)="P2",
      "P1" \pushrect{2}{2}, \vtwist~{s0}{s1}{s2}{s3},
      s0 *+!D{\scriptstyle \alpha \otimes \beta},
      s1 *+!D{\scriptstyle X_{}},
      s2; "P2" **\dir{-} ?> *+!U{\scriptstyle X_{}},
      s3; p+(2,0) **\dir{} ?(.5)="T1" ?>="T2",
      "T1" *+!U{\makebox[2.5pc]{$\scriptstyle \coev_{\alpha \otimes \beta}^{-1}$}} *\frm{-},
      "T2"; \makecoord{p}{"P1"} **\dir{-}
      ?> *+!D{\scriptstyle \beta\smash{{}^*} \otimes \alpha\smash{{}^*}}
      \endxy}
      = {\knotholesize{8pt} \xy /r1pc/:
      (0,2)="P1"; p-(0,4)="P2",
      "P1" \pushrect{1.25}{1.25}, \vtwist~{s0}{s1}{s2}{s3},
      s0 *+!D{\scriptstyle \beta},
      s1 *+!D{\scriptstyle X_{}},
      s3; p+(1.25,0) **\dir{} ?(.5)="T1" ?>="T2",
      "T1" *+!U{\makebox[2pc]{$\scriptstyle \coev_{\beta}^{-1}$}} *\frm{-},
      "T2"; \makecoord{p}{"P1"} **\dir{-}
      ?>="T3" *+!D{\scriptstyle \,\, \beta\smash{{}^*}},
      s2; p-(0,.5) **\dir{-} ?> -(1.25, 0)
      \pushrect{1.25}{1.25}, \vtwist~{s0}{s1}{s2}{s3},
      s0; \makecoord{p}{"P1"} **\dir{-} ?> *+!D{\scriptstyle \alpha_{}},
      s2; \makecoord{p}{"P2"} **\dir{-} ?> *+!U{\scriptstyle X},
      "T3"+(1,0); \makecoord{p}{s3} **\dir{-}
      ?< *+!D{\scriptstyle \,\, \alpha_{}\smash{{}^*}}
      ?>; s3 **\dir{} ?(.5)
      *+!U{\makebox[4pc]{$\scriptstyle \coev_{\alpha}^{-1}$}} *\frm{-},
      \endxy}
      = {\xy /r1pc/:
      (0,2)="P1"; p-(0,4)="P2",
      "P1"+(0,0)="T1" *+!D{\scriptstyle \alpha_{}},
      "P1"+(1,0)="T2" *+!D{\scriptstyle \beta},
      "P1"+(2,0)="T3" *+!D{\scriptstyle X_{}},
      "P1"+(3,0)="T4" *+!D{\scriptstyle \beta\smash{^*}},
      "P1"+(4,0)="T5" *+!D{\scriptstyle \alpha_{}\smash{^*}},
      "T3"; p-(0,1) **\dir{-}
      ?> *+!U{\scriptstyle j_X} *\frm{-}="BX1",
      "T2"; "BX1"!U!L(.6) \mybend{.5},
      "T4"; "BX1"!U!R(.6) \mybend{.5},
      "BX1"!D; p-(0,.5) **\dir{-}
      ?> *+!U{\scriptstyle \quad i_X \quad} *\frm{-}="BX2",
      "T1"; "BX2"!U!L(.6) \mybend{.5},
      "T5"; "BX2"!U!R(.6) \mybend{.5},
      "BX2"!D; \makecoord{p}{"P2"} **\dir{-}
      ?> *+!U{\scriptstyle X}
      \endxy} \\
    & = \text{(the right-hand side of \eqref{eq:functorial-Phi-1})}.
  \end{align*}
  Namely, we obtain a map
  \begin{equation}
    \label{eq:inv-ZC-and-J}
    (\text{the class of invertible objects of $\mathcal{Z}(\mathcal{C})$}) \to \mathscr{J}(\id_{\mathcal{C}}, \id_{\mathcal{C}})
  \end{equation}
  preserving the tensor product. Conversely, given an element $(\beta, j) \in \mathscr{J}(\id_{\mathcal{C}}, \id_{\mathcal{C}})$, we define a natural transformation $\sigma_{\beta}$ by
  \begin{equation*}
    \sigma_{\beta}(X) = \left(
      \beta \otimes X
      \xrightarrow{\quad \id_{\beta} \otimes \id_X \otimes \eval_{\beta}^{-1} \quad}
      \beta \otimes X \otimes \beta^* \otimes \beta
      \xrightarrow{\quad j_X \quad} X \otimes \beta
    \right)
  \end{equation*}
  for $X \in \mathcal{C}$. It is routine to check that $(\beta, \sigma_{\beta})$ is an invertible object of $\mathcal{Z}(\mathcal{C})$ and this construction gives an inverse of the map~\eqref{eq:inv-ZC-and-J}.

  The map~\eqref{eq:inv-ZC-and-J} induces a bijection $\Inv(\mathcal{Z}(\mathcal{C})) \cong J(\id_{\mathcal{C}}, \id_{\mathcal{C}})$ preserving the tensor product. By the above argument and Lemma~\ref{lem:Phi-functorial}, the composition
  \begin{equation*}
    \Inv(\mathcal{Z}(\mathcal{C}))
    \xrightarrow{\quad \cong \quad} J(\id_{\mathcal{C}}, \id_{\mathcal{C}})
    \xrightarrow{\quad \Phi \quad} \Aut_{\otimes}(\id_{\mathcal{C}})
  \end{equation*}
  is an isomorphism of groups. The proof is done.
\end{proof}

\begin{remark}
  The isomorphism $\Inv(\mathcal{Z}(\mathcal{C}) \cong \Aut_{\otimes}(\id_{\mathcal{Z}(\mathcal{C})})$ of Theorem~\ref{thm:Aut-id-ZC} is expressed as follows: Let $\boldsymbol{\beta} = (\beta, \sigma_{\beta})$ be an invertible object of $\mathcal{Z}(\mathcal{C})$, and let $\Omega_{\boldsymbol{\beta}}$ be the element of $\Aut_{\otimes}(\id_{\mathcal{Z}(\mathcal{C})})$ corresponding to $[\boldsymbol{\beta}]$. If we denote the braiding of $\mathcal{Z}(\mathcal{C})$ by $\mathbf{c}$, then we have
  \begin{equation*}
    \Omega_{\boldsymbol{\beta}}(\mathbf{V}) \otimes \id_{\beta}
    =
    {\knotholesize{8pt} \xy /r1pc/:
      (0,2)="P1"; p-(0,4)="P2",
      "P1" *+!D{\scriptstyle V_{}}; p-(0,.25) **\dir{-}
      ?> \pushrect{1.5}{1}, \vtwist~{s0}{s1}{s2}{s3},
      s2 \pushrect{1.5}{1}, \vtwist~{s0}{s1}{s2}{s3},
      s5; p+(1,0) \mycap{.75}
      ?>; \makecoord{p}{s3} **\dir{-}
      ?>; s3 **\dir{} ?(.5)
      *+!U{\scriptstyle \coev_{\beta}^{-1}} *\frm{-},
      s2; \makecoord{p}{"P2"} **\dir{-}
      ?> *+!U{\scriptstyle V},
      s5+(2.5,0); \makecoord{p}{"P1"}; \makecoord{p}{"P2"} **\dir{-}
      ?< *+!D{\scriptstyle \beta}
      ?> *+!U{\scriptstyle \beta}
      \endxy}
    \mathop{=}^{\text{\eqref{eq:inv-ZC-aut-id-pf-1}}}
    {\knotholesize{8pt} \xy /r1pc/:
      (0,2)="P1"; p-(0,4)="P2",
      "P1" *+!D{\scriptstyle V_{}}; p-(0,.25) **\dir{-}
      ?> \pushrect{1.5}{1.5}, \vtwist~{s0}{s1}{s2}{s3},
      s2 \pushrect{1.5}{1.5}, \vtwist~{s0}{s1}{s2}{s3},
      s5; \makecoord{p}{"P1"} **\dir{-} ?> *+!D{\scriptstyle \beta},
      s2; \makecoord{p}{"P2"} **\dir{-} ?> *+!U{\scriptstyle V_{}},
      s3; \makecoord{p}{"P2"} **\dir{-} ?> *+!U{\scriptstyle \beta},
      \endxy}
    = \mathbf{c}_{\boldsymbol{\beta}, \mathbf{V}} \circ \mathbf{c}_{\mathbf{V}, \boldsymbol{\beta}}
  \end{equation*}
  for all $\mathbf{V} = (V, \sigma_V) \in \mathcal{Z}(\mathcal{C})$. Namely, $\Omega_{\boldsymbol{\beta}}$ is the monodromy \cite[Lemma 2.4]{MR1759389} around the invertible object $\boldsymbol{\beta}$.
\end{remark}

\subsection{Rosenberg-Zelinsky exact sequence}

Given a finite tensor category $\mathcal{B}$, we denote by $\Aut_{\otimes}(\mathcal{B})$ the group of isomorphism classes of tensor autoequivalences of $\mathcal{B}$. If, in addition, $\mathcal{B}$ is braided, then we denote by $\Aut_{\otimes}^{\mathrm{br}}(\mathcal{B})$ the group of braided tensor autoequivalences of $\mathcal{B}$.

Let $\mathcal{C}$ be a finite tensor category. The Brauer-Picard group $\mathrm{BrPic}(\mathcal{C})$ of $\mathcal{C}$ is the group of equivalence classes of invertible $\mathcal{C}$-bimodule categories \cite{MR2677836}. There is a canonical isomorphism $\mathrm{BrPic}(\mathcal{C}) \cong \Aut_{\otimes}^{\mathrm{br}}(\mathcal{Z}(\mathcal{C}))$ of groups \cite{MR3107567}, and thus we identify them. By Theorem~\ref{thm:main-thm}, we obtain the following theorem, which has been known in the semisimple case ({\it cf}. a categorification of the Rosenberg-Zelinsky exact sequence for fusion categories given in \cite{MR3606516}).

\begin{theorem}
  \label{thm:rose-zeli}
  For a finite tensor category $\mathcal{C}$, there is an exact sequence
  \begin{equation*}
    \newcommand{\XARR}[1]{\xrightarrow{\mathrm{#1}}}
    1 \to \Aut_{\otimes}(\id_{\mathcal{C}})
    \XARR{(i)} \Inv(\mathcal{Z}(\mathcal{C}))
    \XARR{(ii)} \Inv(\mathcal{C})
    \XARR{(iii)} \Aut_{\otimes}(\mathcal{C})
    \XARR{(iv)} \mathrm{BrPic}(\mathcal{C}).
  \end{equation*}
  Here, the group homomorphism \textup{(i)} sends $\xi \in G(\mathcal{C})$ to the isomorphism class of the object $(\unitobj, \sigma_{\xi}) \in \mathcal{Z}(\mathcal{C})$, where the half-braiding $\sigma_{\xi}$ is defined by
  \begin{equation*}
    \sigma_{\xi}(X) := \left(
      \unitobj \otimes X \xrightarrow{\quad \id \quad} X
      \xrightarrow{\quad \xi_X \quad}
      X \xrightarrow{\quad \id \quad} X \otimes \unitobj
    \right)
  \end{equation*}
  for $X \in \mathcal{C}$. The group homomorphism \textup{(ii)} is given by
  \begin{equation*}
    \Inv(\mathcal{Z}(\mathcal{C})) \to \Inv(\mathcal{C}),
    \quad [(\beta, \sigma_{\beta})] \mapsto [\beta].
  \end{equation*}
  The group homomorphism \textup{(iii)} is given by
  \begin{equation*}
    \Inv(\mathcal{C}) \to \Aut_{\otimes}(\mathcal{C}),
    \quad [\beta] \mapsto [I^{\beta}].
  \end{equation*}
  The group homomorphism \textup{(iv)} is given by
  \begin{equation*}
    \Aut_{\otimes}(\mathcal{C})
    \to \mathrm{BrPic}(\mathcal{C}) = \Aut_{\otimes}^{\mathrm{br}}(\mathcal{Z}(\mathcal{C})),
    \quad [F] \mapsto [\widetilde{F}].
  \end{equation*}
\end{theorem}
\begin{proof}
  It is obvious that the group homomorphism (i) is injective. Since the image of (i) consists of the isomorphism classes of objects of $\mathcal{Z}(\mathcal{C})$ of the form $(\unitobj, \sigma)$ for some $\sigma$, the sequence is exact at $\Inv(\mathcal{Z}(\mathcal{C}))$. For an invertible object $\beta \in \mathcal{C}$, the tensor functor $I^{\beta}$ is isomorphic to $\id_{\mathcal{C}}$ if and only if $(\beta, \sigma) \in \mathcal{Z}(\mathcal{C})$ for some $\sigma$ (see the proof of Theorem~\ref{thm:Aut-id-ZC}). This implies the exactness at $\Inv(\mathcal{C})$. By Theorem~\ref{thm:main-thm}, $[F] \in \Aut_{\otimes}(\mathcal{C})$ belongs to the kernel of (iv) if and only if $F \cong I^{\beta}$ for some invertible object $\beta \in \mathcal{C}$, that is, $F$ belongs to the image of (iii). Thus the sequence is exact at $\Aut_{\otimes}(\mathcal{C})$. The proof is done.
\end{proof}



\begin{thebibliography}{DMNO13}

\bibitem[BKLT00]{MR1759389}
Y. Bespalov, T. Kerler, V. Lyubashenko, and V. Turaev.
\newblock Integrals for braided {H}opf algebras.
\newblock {\em J. Pure Appl. Algebra}, 148(2):113--164, 2000.

\bibitem[BLV11]{MR2793022}
  A. Brugui{\`e}res, S. Lack, and A. Virelizier.
\newblock Hopf monads on monoidal categories.
\newblock {\em Adv. Math.}, 227(2):745--800, 2011.

\bibitem[BV07]{MR2355605}
A. Brugui{\`e}res and A. Virelizier.
\newblock Hopf monads.
\newblock {\em Adv. Math.}, 215(2):679--733, 2007.

\bibitem[BV12]{MR2869176}
A. Brugui{\`e}res and A. Virelizier.
\newblock Quantum double of {H}opf monads and categorical centers.
\newblock {\em Trans. Amer. Math. Soc.}, 364(3):1225--1279, 2012.

\bibitem[DMNO13]{MR3039775}
A. Davydov, M. M{\"u}ger, D. Nikshych, and V. Ostrik.
\newblock The {W}itt group of non-degenerate braided fusion categories.
\newblock {\em J. Reine Angew. Math.}, 677:135--177, 2013.

\bibitem[DN13]{MR3107567}
A. Davydov and D. Nikshych.
\newblock The {P}icard crossed module of a braided tensor category.
\newblock {\em Algebra Number Theory}, 7(6):1365--1403, 2013.

\bibitem[DS07]{MR2342829}
B. Day and R. Street.
\newblock Centres of monoidal categories of functors.
\newblock In {\em Categories in algebra, geometry and mathematical physics},
  volume 431 of {\em Contemp. Math.}, pages 187--202. Amer. Math. Soc.,
  Providence, RI, 2007.

\bibitem[EGNO15]{MR3242743}
P. Etingof, S. Gelaki, D. Nikshych, and V. Ostrik.
\newblock {\em Tensor categories}, volume 205 of {\em Mathematical Surveys and
  Monographs}.
\newblock American Mathematical Society, Providence, RI, 2015.

\bibitem[ENO10]{MR2677836}
P. Etingof, Dmitri Nikshych, and V. Ostrik.
\newblock Fusion categories and homotopy theory.
\newblock {\em Quantum Topol.}, 1(3):209--273, 2010.
\newblock With an appendix by Ehud Meir.

\bibitem[EO04]{MR2119143}
P. Etingof and V. Ostrik.
\newblock Finite tensor categories.
\newblock {\em Mosc. Math. J.}, 4(3):627--654, 782--783, 2004.

\bibitem[GP17]{MR3606516}
C. Galindo and J. Plavnik.
\newblock Tensor functors between {M}orita duals of fusion categories.
\newblock {\em Lett. Math. Phys.}, 107(3):553--590, 2017.

\bibitem[JS93]{MR1250465}
A. Joyal and R. Street.
\newblock Braided tensor categories.
\newblock {\em Adv. Math.}, 102(1):20--78, 1993.

\bibitem[Kas95]{MR1321145}
C. Kassel.
\newblock {\em Quantum groups}, volume 155 of {\em Graduate Texts in
  Mathematics}.
\newblock Springer-Verlag, New York, 1995.

\bibitem[KL01]{MR1862634}
T. Kerler and V. Lyubashenko.
\newblock {\em Non-semisimple topological quantum field theories for
  3-manifolds with corners}, volume 1765 of {\em Lecture Notes in Mathematics}.
\newblock Springer-Verlag, Berlin, 2001.

\bibitem[Lyu99]{MR1625495}
  V. Lyubashenko.
\newblock Squared {H}opf algebras.
\newblock {\em Mem. Amer. Math. Soc.}, 142(677):x+180, 1999.

\bibitem[ML98]{MR1712872}
S. Mac~Lane.
\newblock {\em Categories for the working mathematician}, volume~5 of {\em
  Graduate Texts in Mathematics}.
\newblock Springer-Verlag, New York, second edition, 1998.

\bibitem[NS07]{MR2381536}
S.-H. Ng and P. Schauenburg.
\newblock Higher {F}robenius-{S}chur indicators for pivotal categories.
\newblock In {\em Hopf algebras and generalizations}, volume 441 of {\em
  Contemp. Math.}, pages 63--90. Amer. Math. Soc., Providence, RI, 2007.


\bibitem[{Shi}14]{2014arXiv1402.3482S}
K.~{Shimizu}.
\newblock {On unimodular finite tensor categories}.
{\em Int Math Res Notices} (2016).

\bibitem[Shi15]{MR3314297}
Kenichi Shimizu.
\newblock The pivotal cover and {F}robenius-{S}chur indicators.
\newblock {\em J. Algebra}, 428:357--402, 2015.

\bibitem[Shi17]{2015arXiv150401178S}
K.~{Shimizu}.
\newblock {The monoidal center and the character algebra}.
\newblock {\em  J. Pure Appl. Algebra} (2017). doi:10.1016/j.jpaa.2016.12.037.
\end{thebibliography}

\def\cprime{$'$}

\end{document}